    \renewcommand*{\bm}[1]{#1}%
\numberwithin{equation}{section}
\def\And{\mbox{\rm ~and~}}
\def\i{\mbox{\rm (\hspace{0.2mm}i\hspace{0.2mm})}\,}
\def\sst{\scriptscriptstyle}
\def\({\mbox{\rm (}}\def\){\mbox{\rm )}}
\def\b{\big}
\newcommand{\Rmnum}[1]{\expandafter\@slowromancap\romannumeral #1@}
\newtheorem{theorem}{Theorem}[section]
\newaliascnt{lemma}{theorem}
\newtheorem{lemma}[lemma]{Lemma}
\newaliascnt{proposition}{theorem}
\newtheorem{proposition}[proposition]{Proposition}
\newaliascnt{fact}{theorem}
\newtheorem{fact}[fact]{Fact}
\newaliascnt{definition}{theorem}
\newtheorem{definition}[definition]{Definition}
\newaliascnt{conjecture}{theorem}
\newtheorem{conjecture}[conjecture]{Conjecture}
\newaliascnt{corollary}{theorem}
\newtheorem{corollary}[corollary]{Corollary}
\newaliascnt{claim}{theorem}
\newaliascnt{problem}{theorem}
\newtheorem{problem}[problem]{Problem}
\newaliascnt{question}{theorem}
\newtheorem{question}[question]{Question}
\newaliascnt{remark}{theorem}
\newtheorem{remark}[remark]{Remark}
\newaliascnt{example}{theorem}
\newtheorem{example}[example]{Example}
\newaliascnt{notation}{theorem}
\newtheorem{notation}[notation]{Notation}
\begin{document}
\normalsize
\title{{\bf Adjoints of Matroids}}
\author{Houshan Fu,\;Chunming Tang\thanks{Corresponding author and supported by NSFC 12171114}\\
\small School of Mathematics and Information Science\\
\small Guangzhou University, Guangzhou, China\\
\small fuhoushan@gzhu.edu.cn,\;ctang@gzhu.edu.cn
\and
Suijie Wang\thanks{Supported by NSFC 12171487}\\
\small School of Mathematics\\
\small Hunan University, Changsha, China\\
\small  wangsuijie@hnu.edu.cn}
\date{}
\maketitle
\begin{abstract}
We show that an adjoint of a loopless matroid  is connected if and only if it itself  is connected. Our first goal is to study the adjoint of  modular matroids. We prove that a modular matroid has only one adjoint (up to isomorphism) which can be given by its opposite lattice, and proceed to present some alternative characterizations of  modular matroids associated to adjoints and opposite lattices. The other purpose is to investigate the adjoint sequence $ad^0M,adM,ad^2M,\ldots$ of a  connected matroid $M$. We classify such adjoint sequences into three types: finite, cyclic and convergent.  For the first two types, the adjoint sequences eventually stabilize at the finite projective geometries except for free matroids. For the last type, the infinite non-repeating adjoint sequences are convergent to the infinite projective geometries.
\vspace{1ex}\\
\noindent{\small {\bf Keywords:}} Matroid, adjoint, modular matroid, projective geometry.\vspace{1ex}\\
MSC classes: 05B35.
\end{abstract}
\section{Introduction}\label{Sec-1}
To explore the incidence relations among flats of a geometric lattice, Cheung \cite{Cheung1974} introduced adjoint of the geometric lattice in 1974. Given a geometric lattice $\mathcal{L}$, its opposite lattice $\mathcal{L}^{op}$ obtained from $\mathcal{L}$ by reversing the order relation is not necessarily geometric lattice. If $\mathcal{L}^{op}$ can be embedded into a geometric lattice $\mathcal{L}^\Delta$ and this embedding sends all atoms of $\mathcal{L}^{op}$ bijectively onto atoms of $\mathcal{L}^\Delta$, then $\mathcal{L}^\Delta$ is called an adjoint of $\mathcal{L}$. Given a matroid $ M$, all flats of $M$ automatically form a geometric lattice $\mathcal{L}(M)$. A matroid $N$ is said to be an adjoint of $M$ if $\mathcal{L}(N)$ is an adjoint of $\mathcal{L}(M)$, which was extended to an oriented matroid by Bachem and Kern \cite{Bachem-Kern1986} in 1986. In general, a matroid with rank no more than three has always an adjoint. Nevertheless, when the rank of a matroid is greater than three, it may fail to admit an adjoint, such as, Cheung \cite{Cheung1974} showed that V\'{a}mos matroid does not have an adjoint. Even though a matroid has an adjoint, it is not necessarily unique. The existence and uniqueness of an adjoint is always an important topic in the field. For example,  Alfter and  Hochst\"attler\cite{Alfter-Hochstattler1995} presented a class of matroids of rank four with adjoint including a non-linear example; Alfter, Kern and Wanka \cite{Alfter-Kern-Wanka1990} found that a non-Desargues matroid admits an adjoint, and its dual matroid fail to have an adjoint. More results related to this topic, see \cite{Bachem-Wanka1989,Bjorner1999,Hochstattler-Kromberg1996,Hochstattler-Kromberg1996-1}.

In general, the extension lattice of a matroid is a very important tool to address this issue of  the existence and uniqueness of adjoints. See \cite{Bachem-Kern1986-1} for a further discussion of extension lattices. \cite{Alfter-Kern-Wanka1990} and \cite{Cheung1974} provided an alternative characterization of adjoints that a matroid $N$ is an adjoint of a matroid $M$ if and only if the geometric lattice $\mathcal{L}(N)$ of $N$ can be embedded into the extension lattice $\mathcal{E}(M)$ of $M$, taking the atoms of $\mathcal{L}(N)$ bijectively onto the atoms of $\mathcal{E}(M)$. From the lattice theory, the well-known point-hyperplane duality from projective geometry tells us that the extension lattice of a connected modular matroid may be regarded as its dual geometry.  On the other hand,  the opposite lattice of  a connected modular matroid determines a projective geometry (matroid), which is also its an adjoint. Then a natural question is whether the dual geometry obtained from the hyperplanes of a connected modular matroid is isomorphic to the projective geometry given by its opposite lattice. If this holds,  the next question arises: for a fixed modular matroid, whether the matroid determined by its opposite lattice is the unique adjoint or not up to isomorphism. This poses a further question: what kinds of matroids have only one adjoint up to isomorphism except for the modular matroids. In this paper, we make an attempt to answer the first two questions by proving that the opposite lattice of a connected modular matroid is isomorphic to its extension lattice. The last question seems likely to be not easy and could not have been solved.

For representable matroids, there are some additional explanations for adjoints. When $M$ is a vector matroid, Bixby and Coullard \cite{Bixby-Coullard1988} constructed  its an adjoint $\sigma M$ in two equivalent ways: one is from cocircuits of $M$, the other is from hyperplanes of $M$, called the type I adjoint. This implies that if a matroid is representable, then it  has always an adjoint. Subsequently, Hochst\"attler and Kromberg \cite{Hochstattler-Kromberg1996} made a detailed study of the type I adjoint.  In 2015, Jurrius and Pellikaan \cite{Jurrius-Pellikaan2015} obtained the type I adjoint from the derived code given by the cocircuits. Most recently,  Kung \cite{Kung2020} made a more profound and full scale introduction \cite{Kung2020} for cocircuit matroid that is exactly the type I adjoint.

Another related notion is derived matroid.  Initially, Rota proposed the program of finding the dependencies on the circuits of a matroid at the Bowdoin College Summer 1971 NSF Conference on Combinatorics. In 1980, Longyear \cite{Longyear1980} developed Rota's idea by introducing the derived matroid for binary matroid.  Recently, Oxley and Wang \cite{Oxley-Wang2019} further gave a more general definition of the derived matroid for arbitrary representable matroids by the circuits, which is referred to as Oxley-Wang derived matroid in \cite{Freij2023}. Noting that there is a one-to-one correspondence between hyperplanes of  a matroid $M$ and circuits of its dual matroid $M^*$. For a fixed $\mathbb{F}$-represented matroid $M$, this leads to the duality relation between the type I adjoint $\sigma M$ of $M$ and the Oxley-Wang derived matroid $\delta_{OW}M^*$ of $M^*$, that is,
\begin{equation}\label{Adjoint-Derived}
\sigma M\cong\delta_{OW}M^*.
\end{equation}
A geometric interpretation of the above relation can refer to \cite{Freij2023}. One very significant work in \cite{Oxley-Wang2019} is that Oxley and Wang classified all Oxley-Wang derived sequences into three types: finite, cyclic, and divergent. Motivated by this, we shall try our best to describe the general adjoint sequences that may be not easy. As a byproduct,  we shall further classify all type I adjoint sequences associated with an $\mathbb{F}$-represented matroid, which is implicit in the work of Kung \cite{Kung2020}. The earlier results may see literatures  \cite{Bjorner1999,Jurrius-Pellikaan2015}. Additionally, Freij-Hollanti, Jurrius and Kuznetsova \cite{Freij2023} constructed the combinatorial derived matroid for arbitrary matroids and provided some open questions and more ideas for further research.

Our paper is organized as follows. We start by introducing the preliminaries on adjoints in \autoref{Sec-2}.  \autoref{Sec-3} is devoted to showing that when a matroid has no loops,  its an adjoint is connected if and only if it itself is connected. We further verify that the existence of an adjoint of a matroid is completely determined by the existence of an adjoint of each connected submatroid. In \autoref{Sec-4}, we focus on investigating the adjoins of modular matroids. We show that the opposite lattice of a modular matroid can be identified with its extension lattice. This implies that a modular matroid has a unique adjoint up to isomorphism, and also yields some alternative characterizations of modular matroids associated to adjoints, opposite lattices and so on. \autoref{Sec-5} is concentrated on classifying adjoint sequences. Roughly speaking,  an adjoint sequence of a connected matroid will usually end with a projective geometry, please see \autoref{Finite-Type}, \autoref{Cyclic-Type} and \autoref{Convergent-Type} for more detailed information. As a byproduct of \autoref{Sec-5}, \autoref{Sec-6} characterizes the type I adjoint sequences as well as gives a detailed proof of the duality relation in \eqref{Adjoint-Derived}. In \autoref{Sec-7}, we provide another description of the adjoint by the cocircuits which leads to more ideas for further research associated with combinatorial derived matroids in \cite{Freij2023}.
\section{Preliminaries on adjoints}\label{Sec-2}
The matroid terminology and notations will follow Oxley's book \cite{Oxley}.  Throughout this paper, we always assume that matroids  are non-empty and finite, unless otherwise mentioned. We shall present some necessary notations  and omit their detailed explanations.
\begin{notation}
{\rm
Let $M$ be a matroid. Unless explicitly mentioned otherwise, $E(M)$, $\mathcal{I}(M)$, $\mathcal{H}(M)$, $\mathcal{C}(M)$ and $\mathcal{L}(M)$ denote the ground set of $M$, the set of all independent sets of $M$, the set of all hyperplanes of $M$, the set of all circuits of $M$ and the set of  all flats of $M$ in turn. $r_{M}(\cdot)$ is the rank function of $M$. Notations $\wedge_\mathcal{L}$, $\vee_\mathcal{L}$ and $\le_\mathcal{L}$ denote the `meet', `joint'  and `partial order' in the lattice $\mathcal{L}$,  respectively. Notation $\sqcup$ denotes the disjoint union of sets.
}
\end{notation}

The original definition of  the adjoint of a matroid  is defined as follows.
\begin{definition}\label{Adjoint-Def}
{\rm
Let $ M$ be a matroid. A matroid $ adM$ is an {\em adjoint} of $ M$ if $ r( adM)= r( M)$, and there is an injection and order-reversing map $\phi:\mathcal{L}(M)\to \mathcal{L}(adM)$  sending coatoms of $\mathcal{L}(M)$ bijectively onto atoms of $\mathcal{L}(adM)$.
}
\end{definition}
 If an adjoint $adM$ of a matroid $M$ exists, we call the map  $\phi:\mathcal{L}(M)\to \mathcal{L}(adM)$ in \autoref{Adjoint-Def} an {\em adjoint map} of $M$. The adjoint map $\phi$ shows how the embedding works. Given any flat $X$ of $M$, the adjoint map $\phi$ sends  $X$ to
 \begin{equation}\label{Adjoint-Map}
 \phi(X)=\bigvee_{H\in \mathcal{H}(M),\,X\subseteq H}\phi(H)=\bigcup_{H\in \mathcal{H}(M),\,X\subseteq H}\phi(H).
 \end{equation}
Associated with the adjoint map $\phi$, below provides more fundamental properties of an adjoint, which may be used later.
\begin{proposition}[\cite{Bixby-Coullard1988,Hochstattler-Kromberg1996-1}]\label{Adjoint-Prop}
Let $ M$ be a matroid and $adM$ be its an adjoint. For any flats $X,\;Y\in\mathcal{L}(M)$,  the adjoint map $\phi:\mathcal{L}(M)\to \mathcal{L}(adM)$ has the following properties:
\begin{itemize}
  \item [\rm(i)] If $X$ covers $Y$ in $\mathcal{L}(M)$, then $\phi(Y)$ covers $\phi(X)$ in $\mathcal{L}(adM)$.
  \item [\rm(ii)] $ r_{adM}\big(\phi(X)\big)= r( M)- r_{M}(X)$.
  \item [\rm(iii)]$\phi(X)\wedge_{\mathcal{L}(adM)}\phi(Y)=\phi(X\vee_{\mathcal{L}(M)} Y)$.
  \item [\rm(iv)]  $r_{adM}\big(\phi(X)\big)+ r_{adM}\big(\phi(Y)\big)= r_{adM}\big(\phi(X)\wedge_{\mathcal{L}(adM)}\phi(Y)\big)+ r_{adM}\big(\phi(X)\vee_{\mathcal{L}(adM)}\phi(Y)\big)$.
   \item [\rm(v)] If $H_1,H_2,\ldots, H_m\in\mathcal{H}(M)$ satisfy
   \[
  H_1\cap\cdots\cap H_m\subsetneqq H_1\cap\cdots\cap H_{m-1}\subsetneqq\cdots\subsetneqq H_1\cap H_2\subsetneqq H_1,
   \]
   then $\phi(H_1),\phi(H_2),\ldots,\phi(H_m)$ is an independent set of $adM$.
\end{itemize}
\end{proposition}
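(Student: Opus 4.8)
The plan is to extract from \autoref{Adjoint-Def} and \eqref{Adjoint-Map} three basic facts about the adjoint map $\phi$ and then bootstrap: (a) $\phi$ sends the least flat of $M$ to the greatest flat $\hat{1}$ of $\mathcal{L}(adM)$ and the greatest flat $E(M)$ to the least flat $\hat{0}$ of $\mathcal{L}(adM)$, with $r_{adM}(\hat{1})=r(M)$ and $r_{adM}(\hat{0})=0$; (b) $\phi$ is injective and order-reversing; (c) every atom of $\mathcal{L}(adM)$ is $\phi(H)$ for a unique $H\in\mathcal{H}(M)$, and two distinct atoms of $adM$ share only loops, so every non-loop point of $adM$ lies in exactly one atom. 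For (i) and (ii) I would then run a single maximal-chain argument. A maximal chain of $\mathcal{L}(M)$ has $r(M)+1$ elements; applying $\phi$ yields, by (b), a strictly decreasing chain of $r(M)+1$ flats of $adM$ running from $\hat{1}$ (rank $r(M)$) down to $\hat{0}$ (rank $0$); since $\mathcal{L}(adM)$ is graded of rank $r(M)$, the image chain must itself be maximal, so consecutive members cover one another and $r_{adM}(\phi(Z))=r(M)-r_M(Z)$ along it. Every flat of $M$ and every covering pair $Y\lessdot X$ lies on such a chain, so this gives (ii) and (i) simultaneously.

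For (iii), the inclusion $\phi(X\vee Y)\le\phi(X)\wedge\phi(Y)$ is immediate from (b). For the reverse I would pick a non-loop point $p\in\phi(X)\cap\phi(Y)$, use \eqref{Adjoint-Map} to find $H_1\in\mathcal{H}(M)$ with $X\subseteq H_1$ and $H_2\in\mathcal{H}(M)$ with $Y\subseteq H_2$ such that $p\in\phi(H_1)\cap\phi(H_2)$, and conclude from (c) that $\phi(H_1)=\phi(H_2)$, hence $H_1=H_2\supseteq X\vee Y$, so $p\in\phi(H_1)\subseteq\phi(X\vee Y)$; since loops lie in every flat, $\phi(X)\wedge\phi(Y)\subseteq\phi(X\vee Y)$. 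For (v) I would first recall the elementary matroid fact that a flat $F$ of corank $k$ can be written $F=H_1\cap\cdots\cap H_k$ with $H_1\supsetneq H_1\cap H_2\supsetneq\cdots\supsetneq F$ strictly decreasing (take a maximal chain in $[F,E(M)]$ and separate each covering pair of flats by a hyperplane). Along such a strictly decreasing intersection, $F_i:=H_1\cap\cdots\cap H_i$ has corank exactly $i$, so $r_{adM}(\phi(F_i))=i$ by (ii); since $F_i\subseteq H_j$ forces $\phi(H_j)\le\phi(F_i)$ for $j\le i$, an induction gives $\phi(H_1)\vee\cdots\vee\phi(H_i)=\phi(F_i)$ once one knows $\phi(H_i)\not\le\phi(F_{i-1})$, and the latter holds because $\phi(H_i)\le\phi(F_{i-1})$ would, via \eqref{Adjoint-Map} and (c), force $H_i\supseteq F_{i-1}$, contradicting $F_i\subsetneq F_{i-1}$. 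Hence $\phi(H_1),\dots,\phi(H_m)$ spans a rank-$m$ flat and is therefore independent, which is (v).

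The decisive step is (iv). By (ii) and (iii) it reduces to the rank identity $r_{adM}(\phi(X)\vee\phi(Y))=r(M)-r_M(X)-r_M(Y)+r_M(X\vee Y)$, and here "$\le$" is simply submodularity in $adM$ together with (ii) and (iii). For "$\ge$" I would exhibit an independent set of exactly that size in $adM$: first choose a strictly decreasing intersection $H_1,\dots,H_a$ of hyperplanes through $X$ with $H_1\cap\cdots\cap H_a=X$ (so $a$ is the corank of $X$), then prolong the staircase below $X$ using hyperplanes through $Y$. The crucial observation is that for every coatom $P$ of the interval $[Y,X\vee Y]$ one has $r_M(X\wedge P)=r_M(X)-1$: indeed $X\vee P=X\vee Y$ (because $Y\le P\le X\vee Y$) while $r_M(P)=r_M(X\vee Y)-1$, so submodularity in $M$ pins $r_M(X\wedge P)\ge r_M(X)-1$, with equality since $X\le P$ would give $X\vee Y\le P$. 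Iterating this choice inside $[Y,X\vee Y]$ produces hyperplanes $H_{a+1},\dots,H_{a+e}$ through $Y$, where $e:=r_M(X\vee Y)-r_M(Y)$, such that $X\cap H_{a+1}\cap\cdots\cap H_{a+i}$ drops in rank by exactly one at each step and terminates at $X\wedge Y$; submodularity in $M$, which gives $e\le r_M(X)-r_M(X\wedge Y)$, is precisely what makes the descent last all $e$ steps without stalling or overshooting. Then $H_1,\dots,H_{a+e}$ is a strictly decreasing intersection, so by (v) the atoms $\phi(H_1),\dots,\phi(H_{a+e})$ are independent in $adM$; as the first $a$ lie below $\phi(X)$ and the remaining $e$ below $\phi(Y)$, their join lies below $\phi(X)\vee\phi(Y)$, forcing $r_{adM}(\phi(X)\vee\phi(Y))\ge a+e=r(M)-r_M(X)-r_M(Y)+r_M(X\vee Y)$. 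I expect this prolongation of the staircase in (iv) — arranging that the rank of the "$X$-part" decreases by exactly one at every step — to be the main obstacle, whereas (i)–(iii) and (v) are comparatively routine once facts (a)–(c) are in hand.
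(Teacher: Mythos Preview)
The paper does not prove \autoref{Adjoint-Prop}; it is quoted from \cite{Bixby-Coullard1988,Hochstattler-Kromberg1996-1}, so there is no in-paper argument to compare against. Your treatments of (i), (ii), (iii) and (v) are correct and standard: the maximal-chain argument for (i)--(ii), the atom-by-atom comparison via \eqref{Adjoint-Map} for (iii), and the inductive span computation for (v) all go through as written.

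The argument for (iv) contains a genuine error. Your ``crucial observation'' that \emph{every} coatom $P$ of $[Y,X\vee Y]$ satisfies $r_M(X\wedge P)=r_M(X)-1$ is false, and your justification has the submodularity inequality backwards. Submodularity gives
\[
r_M(X\wedge P)\;\le\;r_M(X)+r_M(P)-r_M(X\vee P)\;=\;r_M(X)-1,
\]
not $\ge$; and the remark ``$X\le P$ would give $X\vee Y\le P$'' yields the same direction $r_M(X\wedge P)\le r_M(X)-1$ again. For a concrete counterexample take $M=U_{3,4}$ on $\{1,2,3,4\}$, $X=\{1,2\}$, $Y=\{3\}$: then $P=\{3,4\}$ is a coatom of $[Y,X\vee Y]$ but $X\wedge P=\emptyset$ has rank $0$, not $r_M(X)-1=1$.

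Your overall strategy for (iv) is nonetheless sound; only the mechanism for producing the next hyperplane needs replacing. Drop the coatom claim and argue directly: having built $X_i:=X\cap H_{a+1}\cap\cdots\cap H_{a+i}$ with $r_M(X_i)=r_M(X)-i$, note that for $i<e$ one has $r_M(X_i)>r_M(X)-e\ge r_M(X\wedge Y)$ (the last inequality is exactly the submodular bound $e\le r_M(X)-r_M(X\wedge Y)$ you already isolated), so $X_i\not\subseteq X\wedge Y$ and hence $X_i\not\subseteq Y$. Since $Y$ is the intersection of the hyperplanes containing it, some hyperplane $H_{a+i+1}\supseteq Y$ omits $X_i$, giving $X_{i+1}=X_i\cap H_{a+i+1}\subsetneq X_i$. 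This produces the required strictly decreasing chain of length $a+e$, and then (v) finishes the lower bound as you intended.
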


It is clear that each adjoint is simple. Moreover,  the loops and parallel elements of  a matroid have no effect on its adjoints. Thus, we have the following fact.
\begin{fact}\label{Fact0}
{\rm Let $adM$ be an adjoint of a matroid $M$. If $e$ is a loop or a parallel element,  then $adM$ is also an adjoint of $M\setminus e$.}
\end{fact}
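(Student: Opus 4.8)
The plan is to deduce the statement from a single structural fact: deleting a loop or a parallel element of a matroid $M$ changes neither the rank nor, up to a canonical isomorphism, the lattice of flats, and the notion of adjoint in \autoref{Adjoint-Def} refers only to these two pieces of data. Concretely, I would first produce the relevant lattice isomorphism and then transport the adjoint map of $M$ along it to get one for $M\setminus e$.

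First I would establish the lattice isomorphism. If $e$ is a loop of $M$, then every flat of $M$ contains $e$; if $e$ is parallel to some $f\in E(M)$, then a flat of $M$ contains $e$ if and only if it contains $f$. In both cases the assignment $X\mapsto X\setminus e$ is an order isomorphism $\theta:\mathcal{L}(M)\to\mathcal{L}(M\setminus e)$, whose inverse $\psi:\mathcal{L}(M\setminus e)\to\mathcal{L}(M)$ restores $e$ either to every flat (loop case) or to exactly those flats that contain $f$ (parallel case). This isomorphism is rank-preserving, namely $r_{M\setminus e}(X\setminus e)=r_M(X)$ for all $X\in\mathcal{L}(M)$, so $r(M\setminus e)=r(M)$ and $\theta$ restricts to a bijection from the coatoms of $\mathcal{L}(M)$ onto the coatoms of $\mathcal{L}(M\setminus e)$. (Here I would tacitly assume $E(M)\setminus e\neq\emptyset$, the contrary case being degenerate since the paper's matroids are non-empty.)

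Next, given an adjoint map $\phi:\mathcal{L}(M)\to\mathcal{L}(adM)$ of $M$, I would check that $\phi\circ\psi:\mathcal{L}(M\setminus e)\to\mathcal{L}(adM)$ is an adjoint map of $M\setminus e$: it is injective as a composite of injections; it is order-reversing as the composite of the order isomorphism $\psi$ with the order-reversing map $\phi$; and it maps the coatoms of $\mathcal{L}(M\setminus e)$ bijectively onto the atoms of $\mathcal{L}(adM)$, since $\psi$ carries them bijectively onto the coatoms of $\mathcal{L}(M)$ and $\phi$ carries those bijectively onto the atoms of $\mathcal{L}(adM)$. Combined with $r(adM)=r(M)=r(M\setminus e)$, this verifies every requirement of \autoref{Adjoint-Def}, so $adM$ is an adjoint of $M\setminus e$.

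I do not expect any genuine obstacle; the whole content sits in the first step, and even that is standard — it is the familiar principle that a matroid, any deletion of its loops and parallel elements, and its simplification all carry the same geometric lattice (see \cite{Oxley}). Accordingly I would keep the written proof very brief, spelling out only the parallel-element case of $\theta$ and leaving the (easier) loop case to the reader.
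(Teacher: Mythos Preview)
Your proposal is correct and is precisely the natural argument. The paper does not actually prove \autoref{Fact0}: it simply remarks that ``the loops and parallel elements of a matroid have no effect on its adjoints'' and records the fact without further justification. Your write-up is thus a careful unpacking of exactly the observation the paper takes for granted, via the standard lattice isomorphism $\mathcal{L}(M)\cong\mathcal{L}(M\setminus e)$ and the transport of the adjoint map along it.
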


In end of this section,  let us quickly recall the type I adjoint for representable matroids. For a field $\mathbb{F}$, given an $\mathbb{F}$-represented matroid $(M,A):=M[A]$ on ground set $E(M)=\{e_1,e_2,\ldots,e_m\}$ of rank $r$, where the columns of  the matrix $A\in \Bbb{F}^{r\times m}$ are labelled, in order, $e_1,e_2,\ldots,e_m$. Each hyperplane $H$ of $M$ naturally determines a hyperplane ${\rm span}\{A_{e_i}:e_i\in H\}$ in $\mathbb{F}^r$ that is spanned by all columns in $A$ labelled by $e_i\in H$.  Let ${\bm h}_H$ be the normal vector of ${\rm span}\{A_{e_i}:e_i\in H\}$ in $\Bbb{F}^r$ when $H\in \mathcal{H}(M)$. The {\em type I adjoint} $\sigma M$ of $M$ is defined as
\begin{equation}\label{Adjoint-I}
\sigma M:=M\b[{\bm h_H}\mid H\in \mathcal{H}(M)\b].
\end{equation}
\section{Connectivity}\label{Sec-3}
We are now ready to study the connectivity of adjoints. To this end, we first introduce another way to characterize an adjoint of a matroid $M$ associated with those sets, such a set consists of its all hyperplanes containing a fixed element of $M$ except for loops.
\begin{proposition}\label{Adjoint-Cha}
Let $ M$ and $adM$ be two matroids of the same rank $r$. Then $adM$ is an adjoint of $M$ if and only if its ground set can be regarded as $E(adM):=\mathcal{H}(M)$ such that the sets $H[e]:=\{H\in\mathcal{H}(M)\mid e\in H\}$ are hyperplanes of $adM$ for all $e\in E(M)$ except for loops.
\end{proposition}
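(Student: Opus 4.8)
The plan is to read both implications off the adjoint map $\phi$, using the explicit description of $\phi$ on flats in \eqref{Adjoint-Map} together with the rank formula of \autoref{Adjoint-Prop}(ii). First I would do the ``only if'' direction. Suppose $adM$ is an adjoint of $M$ with adjoint map $\phi:\mathcal{L}(M)\to\mathcal{L}(adM)$. Since every adjoint is simple, the atoms of $\mathcal{L}(adM)$ are exactly the singletons $\{f\}$ with $f\in E(adM)$, and $\phi$ restricts to a bijection from the coatoms of $\mathcal{L}(M)$, i.e.\ from $\mathcal{H}(M)$, onto these singletons. I would use the bijection $H\mapsto\phi(H)$ to relabel the ground set of $adM$, so that after relabelling $E(adM)=\mathcal{H}(M)$ and $\phi(H)=\{H\}$ for each $H\in\mathcal{H}(M)$. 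Now fix a non-loop $e\in E(M)$ and apply \eqref{Adjoint-Map} to the rank-one flat $X=\cl_M(e)$; since a flat $H$ of $M$ contains $\cl_M(e)$ precisely when $e\in H$,
\[
\phi(\cl_M(e))=\bigcup_{H\in\mathcal{H}(M),\,e\in H}\phi(H)=\bigcup_{H\in H[e]}\{H\}=H[e].
\]
This is a flat of $adM$ of rank $r(M)-r_M(\cl_M(e))=r-1$ by \autoref{Adjoint-Prop}(ii), hence a hyperplane of $adM$, which is what the ``only if'' part asserts.

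For the ``if'' direction I would construct the adjoint map by hand. Assume $adM$ has rank $r$, $E(adM)=\mathcal{H}(M)$, and every $H[e]$ (with $e\in E(M)$ a non-loop) is a hyperplane of $adM$. Define $\phi:\mathcal{L}(M)\to\mathcal{L}(adM)$ by $\phi(X):=\{H\in\mathcal{H}(M)\mid X\subseteq H\}$, equivalently $\phi(X)=\bigcap_e H[e]$ with $e$ ranging over the non-loop elements of $X$ (an empty intersection, read as $E(adM)$, when $X$ contains no non-loop). Each $H[e]$ is a flat of $adM$ and flats are closed under intersection, so $\phi(X)$ is a flat of $adM$ — provided $\emptyset$ is a flat, i.e.\ $adM$ is loopless; I would check this first, since a loop of $adM$ would be a hyperplane $H_0$ of $M$ lying in every $H[e]$, forcing $e\in H_0$ for all non-loop $e$ and hence $H_0=E(M)$, which is impossible. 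The map $\phi$ is clearly order-reversing, and it is injective because every flat of $M$ is the intersection of the hyperplanes containing it (and $\phi(X)=\emptyset$ only for $X=E(M)$). Finally, for a hyperplane $H_0$ of $M$ one has $\phi(H_0)=\{H_0\}$, since $H_0$ is a maximal proper flat; and $\{H_0\}$ is an atom of $\mathcal{L}(adM)$ once I show $adM$ is simple: if $H_0\neq H_1$ were parallel in $adM$, pick $e\in H_0\setminus H_1$ (possible, since neither hyperplane contains the other), observe $e$ is a non-loop with $H_0\in H[e]$, so $H_1\in\cl_{adM}(H_0)\subseteq H[e]$ and thus $e\in H_1$, a contradiction. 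Hence $\phi$ takes the coatoms of $\mathcal{L}(M)$ bijectively onto the atoms of $\mathcal{L}(adM)$, and $adM$ is an adjoint of $M$ by \autoref{Adjoint-Def}.

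Almost all of this is routine manipulation of flats and ranks; the one step I expect to require real care is the claim, used throughout the ``if'' direction, that the lone hypothesis ``every $H[e]$ is a hyperplane of $adM$'' already forces $adM$ to be loopless and simple. Those two properties are exactly what legitimise $\emptyset$ and the singletons $\{H_0\}$ as a flat and as atoms of $adM$, so that the constructed $\phi$ is a well-defined map into $\mathcal{L}(adM)$ satisfying all the conditions in \autoref{Adjoint-Def}. (The degenerate rank-zero case, where $\mathcal{H}(M)=\emptyset$ and $E(adM)$ would be empty, is excluded by the standing convention that matroids are non-empty.)
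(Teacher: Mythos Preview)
Your proof is correct and follows the same approach as the paper's own proof: both use \eqref{Adjoint-Map} together with \autoref{Adjoint-Prop}(ii) for the forward direction, and for the converse both construct the adjoint map as $X\mapsto\{H\in\mathcal{H}(M)\mid X\subseteq H\}=\bigcap_{e\in X}H[e]$ and check it is order-reversing, injective, and carries coatoms to atoms. Your explicit verification that the hypothesis forces $adM$ to be loopless and simple is a detail the paper treats as automatic, so the extra care you flagged there is well placed rather than superfluous.
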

\begin{proof}
Based on \autoref{Fact0}, we may assume that $M$ is simple. For the sufficiency,  if $adM$ is an adjoint of $M$,  then the equation \eqref{Adjoint-Map} implies that the adjoint map $\phi:\mathcal{L}(M)\to\mathcal{L}(adM)$ sends each $e\in E(M)$  to $\phi(e)=\bigcup_{H\in \mathcal{H}(M),\,e\in H}\phi(H)\in\mathcal{L}(adM)$. Immediately, we have $r_{adM}(\phi(e))=r-1$  from the part ${\rm(ii)}$ in \autoref{Adjoint-Prop}. Namely, every $\phi(e)$ is a hyperplane of $adM$. In this case, suppose $E(adM):=\mathcal{H}(M)$ and $\phi(H)=H$ for $H\in\mathcal{H}(M)$. Then we obtain that $H[e]=\phi(e)$ is a hyperplane of $adM$.

For the necessity,  we define a map $\psi:\mathcal{L}(M)\to\mathcal{L}(adM)$ such that $\psi$ sends $e\in E(M)$ to $\psi(e)=H[e]$ and each flat $X$ of $M$ to $\psi(X)=\bigcap_{e\in X}H[e]$. Obviously, $\bigcap_{e\in X}H[e]\in \mathcal{L}(adM)$ since $H[e]$ is a hyperplane of $adM$. So $\psi$ is well defined.  Note from $E(adM):=\mathcal{H}(M)$ that $\psi$  automatically becomes  a bijection between the coatoms of $\mathcal{L}(M)$ and the atoms of $\mathcal{L}(adM)$.  To prove that $adM$ is an adjoint of $M$, it suffices to verify that $\psi$ is an order-reversing and injective map. Given any flats $X,Y\in\mathcal{L}(M)$. If  $X\subseteq Y$ in $\mathcal{L}(M)$, then we arrive at
\[
\psi(Y)=\bigcap_{e\in Y}H[e]\supseteq\bigcap_{e\in X}H[e]=\psi(X).
\]
Namely, $\psi$ is an order-reversing map. Recall from the definition of $H[e]$ that
\[
\psi(X)=\bigcap_{e\in X}H[e]=\bigcap_{e\in X}\{H\in\mathcal{H}(M)\mid e\in H\}=\{H\in\mathcal{H}(M)\mid X\subseteq H\}.
\]
Likewise, we get $\psi(Y)=\{H\in\mathcal{H}(M)\mid Y\subseteq H\}$. This implies that if $X\ne Y$, then $\psi(X)\ne\psi(Y)$. Hence, $\psi$ is injective. We complete this proof.
\end{proof}

Given a basis $B$ of  a matroid $M$ and an element $e$ of $B$, define $H(e;B)$ is a hyperplane of $M$ satisfying $B\backslash e\subseteq H(e;B)$, the unique hyperplane $H(e;B)$ is called the {\em fundamental hyperplane} of $e$ with respect to $B$. Next we will use the equivalent characterization of adjoints in \autoref{Adjoint-Cha} to show that  all fundamental hyperplanes of a matroid with respect to its a fixed basis form a basis of its an adjoint.
\begin{lemma}\label{Ajoint-basis}
Let $M$ be a matroid of rank $r$ and $adM$ its an adjoint with ground set $E(adM)=\mathcal{H}(M)$. If $B$ is a basis of $ M$, then $\{H(e;B)\mid e\in B\}$ is a basis of $adM$.
\end{lemma}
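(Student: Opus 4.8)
The plan is to use the reformulation of adjoints provided by \autoref{Adjoint-Cha}. Since $adM$ has ground set $E(adM)=\mathcal{H}(M)$, that proposition tells us that for every non-loop $e\in E(M)$ the set $H[e]=\{H\in\mathcal{H}(M)\mid e\in H\}$ is a hyperplane (in particular a flat) of $adM$. Writing $B=\{e_1,\dots,e_r\}$ and $H_i:=H(e_i;B)$, I would prove the lemma by showing directly that $S:=\{H_1,\dots,H_r\}$ is an independent set of $adM$ with $|S|=r$; since $r(adM)=r(M)=r$ by definition of an adjoint, an independent set of size $r$ is automatically a basis.

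The whole argument rests on two elementary observations about fundamental hyperplanes. First, for $i\neq j$ we have $e_j\in H_i$: indeed $e_j\in B\setminus e_i$, and $B\setminus e_i\subseteq H(e_i;B)$ by the defining property of the fundamental hyperplane. Second, $e_j\notin H_j$: otherwise $B=(B\setminus e_j)\cup\{e_j\}\subseteq H_j$, forcing $r_M(H_j)\geq r$ and contradicting that $H_j$ is a hyperplane. These two facts already give $|S|=r$, since $H_i=H_j$ with $i\neq j$ would put $e_j\in H_i=H_j$, contradicting the second observation.

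For the independence of $S$ in $adM$ I would invoke the standard criterion that a subset of the ground set of a matroid is independent precisely when none of its elements lies in the closure of the remaining ones. Fix $j\in\{1,\dots,r\}$. By the first observation, every $H_i$ with $i\neq j$ belongs to $H[e_j]$, so $S\setminus\{H_j\}\subseteq H[e_j]$; since $H[e_j]$ is a flat of $adM$, it follows that $\cl_{adM}(S\setminus\{H_j\})\subseteq H[e_j]$. But $H_j\notin H[e_j]$ by the second observation, hence $H_j\notin\cl_{adM}(S\setminus\{H_j\})$. As $j$ was arbitrary, $S$ is independent in $adM$, and the proof is complete.

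I do not anticipate a genuine obstacle here; the only point requiring care is the bookkeeping around the two roles of a hyperplane $H$ of $M$ — as a subset of $E(M)$ and, simultaneously, as a single element of $E(adM)=\mathcal{H}(M)$ — so that $H[e_j]$ must be read as a subset of $E(adM)$ and the closure in the last paragraph is taken in $adM$. It is also worth noting at the outset that, because a basis contains no loops, restricting attention to the elements $e_i\in B$ avoids the ``except for loops'' caveat in \autoref{Adjoint-Cha}. As an alternative to the closure argument, one could instead deduce the independence of $S$ from part (v) of \autoref{Adjoint-Prop} after checking that $H_1\supsetneq H_1\cap H_2\supsetneq\cdots\supsetneq H_1\cap\cdots\cap H_r$ is strictly decreasing, which in turn follows from $e_k\in H_1\cap\cdots\cap H_{k-1}$ together with $e_k\notin H_k$; but the direct argument above seems shortest.
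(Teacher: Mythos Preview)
Your proof is correct and rests on exactly the same combinatorial observation as the paper's: for a basis $B=\{e_1,\dots,e_r\}$, each $e_k$ lies in every fundamental hyperplane $H(e_i;B)$ except $H(e_k;B)$. The only difference is in how independence is extracted from this. The paper takes precisely the alternative you sketch at the end: it uses the observation to produce the strictly decreasing chain $H_1\supsetneq H_1\cap H_2\supsetneq\cdots\supsetneq H_1\cap\cdots\cap H_r$ and then invokes part~(v) of \autoref{Adjoint-Prop}. Your main route instead feeds the observation into \autoref{Adjoint-Cha} to get the flats $H[e_j]$ of $adM$ and applies the closure criterion for independence. Both arguments are short and essentially equivalent; yours avoids ordering the elements and the chain bookkeeping, while the paper's stays entirely within the black-box properties of \autoref{Adjoint-Prop} without needing \autoref{Adjoint-Cha}.
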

\begin{proof}
Without loss of generality, let the basis $B=\big\{e_i\mid i\in[r]\big\}$ of $M$.  Noting from the definition of the fundamental hyperplane that for any $k\in[r]$,  all fundamental hyperplanes $H(e_i;B)$ contain $e_k$ except for $H(e_k;B)$. This indicates
\[
  H(e_1;B)\cap\cdots\cap  H(e_r;B)\subsetneqq  H(e_1;B)\cap\cdots\cap H(e_{r-1};B)\subsetneqq\cdots\subsetneqq  H(e_1;B)\cap  H(e_2;B)\subsetneqq  H(e_1;B).
 \]
It follows from the property ${\rm (v)}$ in \autoref{Adjoint-Prop} that $\big\{H(e_i;B)\mid i\in[r]\big\}$ is independent. Then $\big\{H(e_i;B)\mid i\in[r]\big\}$ is a basis of $adM$ since the rank of $adM$ equals $r$.
\end{proof}

The following result states that if an adjoint $adM$ is not connected, then the original matroid $M$ is also not connected.
\begin{theorem}\label{Adjoint-Con1}
Let $M$ be a loopless matroid and $adM$ its an adjoint.  If $adM=N_1\oplus N_2$, then there are submatroids $M_1$ and $M_2$ of $M$ such that $M=M_1\oplus M_2$ and $N_i=ad M_i$ for $i=1,2$.
\end{theorem}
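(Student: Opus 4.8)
The plan is to use the characterization from \autoref{Adjoint-Cha}: write $E(adM) = \mathcal{H}(M)$, so that the direct-sum decomposition $adM = N_1 \oplus N_2$ induces a partition $\mathcal{H}(M) = \mathcal{H}_1 \sqcup \mathcal{H}_2$ with $E(N_i) = \mathcal{H}_i$. For each element $e \in E(M)$ (recall $M$ is loopless, so every $e$ is a genuine element), the set $H[e] = \{H \in \mathcal{H}(M) \mid e \in H\}$ is a hyperplane of $adM$. The key structural fact about hyperplanes of a direct sum $N_1 \oplus N_2$ is that they all have the form $(E(N_1) \setminus \{\text{point}\}) \cup \ldots$; more precisely, a hyperplane of $N_1 \oplus N_2$ is either $F_1 \sqcup E(N_2)$ for a hyperplane $F_1$ of $N_1$, or $E(N_1) \sqcup F_2$ for a hyperplane $F_2$ of $N_2$. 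So for each $e$, exactly one of the following holds: $H[e] \supseteq \mathcal{H}_2$ (and $H[e] \cap \mathcal{H}_1$ is a hyperplane of $N_1$), or $H[e] \supseteq \mathcal{H}_1$ (and $H[e] \cap \mathcal{H}_2$ is a hyperplane of $N_2$). Define $E_1 = \{e \in E(M) \mid H[e] \supseteq \mathcal{H}_2\}$ and $E_2 = \{e \in E(M) \mid H[e] \supseteq \mathcal{H}_1\}$; since every hyperplane $H$ of $M$ has a nonempty complement, not every $e$ lies in every $H$, so an element cannot be in both $E_1$ and $E_2$ simultaneously unless it lies in all hyperplanes — which is impossible in a loopless matroid. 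This gives the partition $E(M) = E_1 \sqcup E_2$.

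Next I would verify that this is a separator decomposition. The cleanest route is via the rank function applied to $H[e]$'s. Using \autoref{Ajoint-basis}: pick a basis $B$ of $M$; then $\{H(e;B) \mid e \in B\}$ is a basis of $adM$, so it splits as a basis of $N_1$ together with a basis of $N_2$ according to the partition $\mathcal{H}_1 \sqcup \mathcal{H}_2$. I want to argue that the partition $B = (B \cap E_1) \sqcup (B \cap E_2)$ matches this, and moreover that $E_i$ is a flat (indeed a separator) of $M$ with $r_M(E_1) + r_M(E_2) = r(M)$. The rank count should follow: $r(adM) = r(N_1) + r(N_2)$, and via \autoref{Adjoint-Prop}(ii) applied to the flats $E_1, E_2$ (once we know they are flats), $r_{adM}(\phi(E_i)) = r - r_M(E_i)$, while $\phi(E_1)$ and $\phi(E_2)$ should be recognized as exactly $\mathcal{H}_2$ and $\mathcal{H}_1$ respectively (the flats of $adM$ generated by the two component ground sets). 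Then $r - r_M(E_1) = r_{adM}(\mathcal{H}_2) = r(N_2)$ and similarly $r - r_M(E_2) = r(N_1)$, and adding gives $2r - r_M(E_1) - r_M(E_2) = r(N_1) + r(N_2) = r$, hence $r_M(E_1) + r_M(E_2) = r$, which (together with $E_1 \sqcup E_2 = E(M)$ and submodularity) forces $M = (M|E_1) \oplus (M|E_2)$.

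Finally, set $M_i = M|E_i$ and check $ad M_i = N_i$. Here I would again invoke \autoref{Adjoint-Cha}: I must show $E(N_i)$ can be identified with $\mathcal{H}(M_i)$ and that $\{H \in \mathcal{H}(M_i) \mid e \in H\}$ is a hyperplane of $N_i$ for every $e \in E_i$. The point is that hyperplanes of $M = M_1 \oplus M_2$ are precisely $H_1 \sqcup E_2$ (for $H_1 \in \mathcal{H}(M_1)$) and $E_1 \sqcup H_2$ (for $H_2 \in \mathcal{H}(M_2)$), so $\mathcal{H}(M) \cong \mathcal{H}(M_1) \sqcup \mathcal{H}(M_2)$, matching $\mathcal{H}_1 \sqcup \mathcal{H}_2$; and for $e \in E_1$, the hyperplanes of $M$ containing $e$ that lie in $\mathcal{H}_1$ correspond exactly to hyperplanes of $M_1$ containing $e$, so $H[e] \cap \mathcal{H}_1 = \{H_1 \in \mathcal{H}(M_1) \mid e \in H_1\}$, and we already know $H[e] \cap \mathcal{H}_1$ is a hyperplane of $N_1$. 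Then \autoref{Adjoint-Cha} gives $N_1 = ad M_1$, and symmetrically $N_2 = ad M_2$.

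The main obstacle I anticipate is the bookkeeping in the middle paragraph: pinning down that $\phi(E_i)$ is exactly the component ground set $\mathcal{H}_{3-i}$ as a flat of $adM$, and making the rank identity airtight rather than hand-wavy — in particular ensuring $E_1$ and $E_2$ are flats of $M$ before applying \autoref{Adjoint-Prop}(ii). One has to be a little careful that $E_i$ is closed; this should come from the fact that $\phi$ is injective and order-reversing together with the explicit formula \eqref{Adjoint-Map}, but it is the step most prone to a gap.
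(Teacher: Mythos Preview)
Your proposal is correct and follows the same overall architecture as the paper: identify $E(adM)$ with $\mathcal{H}(M)$ via \autoref{Adjoint-Cha}, partition $E(M)$ according to which component ground set is contained in $H[e]$, establish the rank identity $r_M(E_1)+r_M(E_2)=r$, and then check that each $N_i$ is an adjoint of $M|E_i$.

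The executions differ only in two places. For the rank count, the paper argues through \autoref{Ajoint-basis}: for a basis $B$ of $M$ the fundamental hyperplanes $H(e;B)$ form a basis of $adM$, one tracks which component each $H(e;B)$ lands in, and a contradiction with a second basis pins down $r_M(E_i)$. Your route via \autoref{Adjoint-Prop}(ii) is actually cleaner, and the worry you flagged dissolves at once: since $E_1=\{e\mid H[e]\supseteq\mathcal H_2\}=\bigcap_{H\in\mathcal H_2}H$ is an intersection of hyperplanes of $M$, it is automatically a flat. Then $\phi(E_1)\supseteq\mathcal H_2$ yields $r-r_M(E_1)\ge r(N_2)$, symmetrically $r-r_M(E_2)\ge r(N_1)$, and adding these to the submodular bound $r_M(E_1)+r_M(E_2)\ge r$ forces equality throughout (in particular $\phi(E_i)=\mathcal H_{3-i}$). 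For the final step, the paper composes $\phi$ with the lattice isomorphism $[\emptyset,E(M_i)]\cong[E(M)\setminus E(M_i),E(M)]$ coming from the direct sum to produce the adjoint map for $M_i$, whereas you re-invoke \autoref{Adjoint-Cha} directly; both arguments work.
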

\begin{proof}
We need only consider that $M$ is simple from \autoref{Fact0}.  By \autoref{Adjoint-Cha}, we may assume that $E(adM)=E(N_1)\sqcup E(N_2)=\mathcal{H}(M)$,  and the adjoint map $\phi:\mathcal{L}(M)\to\mathcal{L}(adM)$ sends each element $e\in E(M)$ to $\phi(e)=H[e]$. Then $adM=N_1\oplus N_2$ indicates that for each cocircuit $C^*$ of $adM$, $C^*$ is contained in one of $E(N_1)$ and $E(N_2)$. It means that $E(N_1)$ and $E(N_2)$ automatically induce a partition of $\mathcal{H}(adM)$ such that $H\in\mathcal{H}_{E(N_i)}$ if and only if $H\in\mathcal{H}(adM)$ and $E(N_i)\subseteq H$ for $i=1,2$.  From the injectivity of $\phi$, let $E_i=\big\{e\in E(M)\mid H[e]\in\mathcal{H}_{E(N_i)}\big\}$ and $M_i=M\setminus E_i$ for each $i$. Then $E(M)=E_1\sqcup E_2$, $E(M_1)=E_2$ and $E(M_2)=E_1$.  We claim that $E_1$ and $E_2$ are not empty set. Suppose $E_1=\emptyset$.  Namely, $E_2=E(M)$. Then $H[e]\in \mathcal{H}_{E(N_2)}$ for all $e\in E(M)$. Recall from the definition of $\mathcal{H}_{E(N_2)}$ that $E(N_2)\subseteq H[e]$ for each $e\in E(M)$. This implies that $E(M)\subseteq H$ for each hyperplane $H$ of $M$ in $E(N_2)$,  a contradiction. Then we have verified $E_1\ne\emptyset$. Likewise, we also obtain $E_2\ne\emptyset$.

To prove $M=M_1\oplus M_2$, it is equivalent to showing $r_M(E_1)+r_M(E_2)=r(M)$. Let $r(M)=r$ and  $B=\{e_1,e_2\ldots,e_r\}$ be a basis of $M$. Using the same argument as in the proof of that $E_1\ne\emptyset$, we can arrive at $B\cap E_i\ne\emptyset$ for $i=1,2$ as well. Suppose $B_1=B\cap E_1=\{e_1,\ldots,e_k\}$ and $B_2=B\cap E_2=\{e_{k+1},\ldots,e_r\}$ for some positive integer $ k<r$. Recall the definitions of $E_1$ and $E_2$, we have
$B_1\subseteq H_1$ and $B_2\subseteq H_2$ for all hyperplanes $H_1\in E(N_1)$ and $H_2\in E(N_2)$ of $M$. It follows from the definition of the fundamental hyperplane that we obtain $H(e_i;B)\in E(N_2)$ for all  $e_i\in B_1$ and $H(e_i;B)\in E(N_1)$ for all $e_i\in B_2$.  According to \autoref{Ajoint-basis},
$r_{adM}(E(N_1))=|B_2|=r-k$ and $r_{adM}(E(N_2))=|B_1|=k$ since $adM=N_1\oplus N_2$. We assert $r_M(E_1)=k$ and $r_M(E_2)=r-k$. Otherwise, we may assume $r_M(E_1)=j\ne k$. Let $B_1'$ be a basis of $E_1$ and $B'$ a basis of $E(M)$ containing $B_1'$. Repeating the same argument as above, we get $r_{adM}(E(N_2))=|B_1'|=j\ne k$,  a contradiction. Thus, we obtain  $r_M(E_1)+r_M(E_2)=r$.

Moreover, notice from $M=M_1\oplus M_2$ that the intervals $[\emptyset,E_2]$ and $[E_1,E(M)]$ of $\mathcal{L}(M)$ are isomorphic under the map $\phi_1$, where $\phi_1:[\emptyset,E_2]\to [E_1,E(M)]$ sends $X$ to $X\sqcup E_1$. Then $\phi\circ\phi_1:\mathcal{L}(M_1)\to\mathcal{L}(adM)$ is injective, order-reversing, and onto $E(N_1)$. Thus $N_1=adM_1$. Likewise, we can verify $N_2=adM_2$ as well. This completes the proof.
\end{proof}

On the other hand, Bixby and Coullard \cite{Bixby-Coullard1988} proved the opposite of \autoref{Adjoint-Con1} that an adjoint of a direct sum of two matroids is the direct sum of the adjoints of these matroids.
\begin{lemma}[\cite{Bixby-Coullard1988}, Lemma 4.2]\label{Adjoint-Con2}
Let $M$ be a loopless  matroid and $adM$ its an adjoint. If  $M=M_1\oplus M_2$, then there are submatroids  $N_1,N_2$ of $adM$ such that $adM=N_1\oplus N_2$, where $N_i=adM_i$ for $i=1,2$.
\end{lemma}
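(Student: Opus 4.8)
The plan is to realize $adM$ concretely via \autoref{Adjoint-Cha}. Using \autoref{Fact0} we may assume $M$ is simple, hence $M_1,M_2$ are simple of rank at least $1$; identify $E(adM)=\mathcal{H}(M)$ so that the adjoint map $\phi:\mathcal{L}(M)\to\mathcal{L}(adM)$ is $\phi(X)=\{H\in\mathcal{H}(M)\mid X\subseteq H\}$ and in particular $\phi(e)=H[e]$ for $e\in E(M)$. The first step is the well-known description of the hyperplanes of a direct sum: since $M=M_1\oplus M_2$, a hyperplane of $M$ is either $H_1\sqcup E(M_2)$ with $H_1\in\mathcal{H}(M_1)$, or $E(M_1)\sqcup H_2$ with $H_2\in\mathcal{H}(M_2)$, and the two families --- call them $\mathcal{H}_1$ and $\mathcal{H}_2$ --- are disjoint. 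Thus $E(adM)=\mathcal{H}(M)=\mathcal{H}_1\sqcup\mathcal{H}_2$, with obvious bijections $\mathcal{H}_i\cong\mathcal{H}(M_i)$.

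Next I would pin down the ranks $r_{adM}(\mathcal{H}_i)$. Because $M_1$ is loopless, $E(M_2)$ is a flat of $M$, and one checks directly that $\mathcal{H}_1=\{H\in\mathcal{H}(M)\mid E(M_2)\subseteq H\}=\phi(E(M_2))$; symmetrically $\mathcal{H}_2=\phi(E(M_1))$. Then \autoref{Adjoint-Prop}(ii) gives $r_{adM}(\mathcal{H}_1)=r(M)-r_M(E(M_2))=r(M_1)$ and $r_{adM}(\mathcal{H}_2)=r(M_2)$. Since $\mathcal{H}_1\sqcup\mathcal{H}_2=E(adM)$ and $r(M_1)+r(M_2)=r(M)=r(adM)$, we obtain $r_{adM}(\mathcal{H}_1)+r_{adM}(\mathcal{H}_2)=r(adM)$, which means precisely that $adM=N_1\oplus N_2$ for $N_i:=adM|_{\mathcal{H}_i}$.

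Finally I would check $N_i=adM_i$, using \autoref{Adjoint-Cha} in the reverse direction: $N_i$ has rank $r(M_i)$ and ground set identified with $\mathcal{H}(M_i)$, so it is enough to show that for each non-loop $e\in E(M_i)$ the set $\{H\in\mathcal{H}(M_i)\mid e\in H\}$ is a hyperplane of $N_i$. Take $e\in E(M_1)$. Then $H[e]=\phi(e)$ contains every hyperplane in $\mathcal{H}_2$ and meets $\mathcal{H}_1$ exactly in the copy of $\{H_1\in\mathcal{H}(M_1)\mid e\in H_1\}$; so in the direct sum $N_1\oplus N_2$ the flat $H[e]$ equals $F_1\sqcup\mathcal{H}_2$ with $F_1=H[e]\cap\mathcal{H}_1$ a flat of $N_1$. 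Since $H[e]$ is a hyperplane, $r_{N_1}(F_1)=r(adM)-1-r_{N_2}(\mathcal{H}_2)=r(M)-1-r(M_2)=r(M_1)-1$, so $F_1$ is a hyperplane of $N_1$, and under the identification it is exactly $\{H_1\in\mathcal{H}(M_1)\mid e\in H_1\}$. By \autoref{Adjoint-Cha}, $N_1=adM_1$, and $N_2=adM_2$ by the symmetric argument.

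The rank computation in the middle, once $\mathcal{H}_i$ is recognized as $\phi(E(M_j))$, is immediate from \autoref{Adjoint-Prop}(ii); the substantive point --- and the main obstacle --- is the last paragraph, where one must marry the structure of flats and hyperplanes of a direct sum with the characterization of adjoints in \autoref{Adjoint-Cha} to conclude that each restriction $N_i$ is a genuine adjoint of $M_i$ rather than a matroid that merely has the correct rank. A routine loose end is the initial reduction to the simple case and the resulting reading of $adM_i$ as the adjoint of the simplification of $M_i$, both handled by \autoref{Fact0}.
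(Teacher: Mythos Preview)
The paper does not supply its own proof of this lemma: it is quoted verbatim from \cite{Bixby-Coullard1988} and used as a black box. Your argument is correct and is a clean self-contained proof using the paper's internal tools --- the characterization in \autoref{Adjoint-Cha} and the rank formula in \autoref{Adjoint-Prop}(ii) --- rather than relying on the external reference; each step (the description of $\mathcal{H}(M_1\oplus M_2)$, the identifications $\mathcal{H}_i=\phi(E(M_j))$, the rank count giving the separator, and the verification of $N_i=adM_i$ via \autoref{Adjoint-Cha}) is sound.
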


The following result is the straightforward consequence of  \autoref{Adjoint-Con1} and \autoref{Adjoint-Con2}.
\begin{corollary}\label{Adjoint-Con3}
Let $M$ be a loopless matroid and $ adM$ its an adjoint. Then $M$ is connected if and only if $adM$ is connected.
\end{corollary}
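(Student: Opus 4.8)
The plan is to deduce Corollary~\ref{Adjoint-Con3} as a nearly formal consequence of the two preceding results, \autoref{Adjoint-Con1} and \autoref{Adjoint-Con2}, by contraposition in each direction. The statement is a biconditional ``$M$ connected $\iff$ $adM$ connected'', so I would split it into the two implications and handle each by assuming disconnectedness of one side and producing a nontrivial direct-sum decomposition of the other.

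For the forward direction I would argue the contrapositive: suppose $adM$ is not connected, so that $adM = N_1 \oplus N_2$ with both $N_1,N_2$ nonempty (and, since $adM$ is loopless—indeed simple—these are genuine nontrivial summands of positive rank). Then \autoref{Adjoint-Con1} immediately supplies submatroids $M_1,M_2$ of $M$ with $M = M_1 \oplus M_2$ and $N_i = adM_i$; since $N_i\ne\emptyset$ forces $M_i\ne\emptyset$, this is a nontrivial decomposition, so $M$ is not connected. For the reverse direction I would again argue the contrapositive: suppose $M$ is not connected, so $M = M_1 \oplus M_2$ with $M_1,M_2$ nonempty. Because $M$ is loopless, so is each $M_i$, and \autoref{Adjoint-Con2} yields $adM = N_1 \oplus N_2$ with $N_i = adM_i$; each $N_i$ is nonempty (every matroid, having rank equal to its underlying matroid, has a nonempty adjoint here), hence $adM$ is not connected. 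Combining the two contrapositives gives the biconditional.

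There is essentially no main obstacle here, since both heavy lifts are already done: the hard content lives in \autoref{Adjoint-Con1} (proved above) and in Bixby--Coullard's \autoref{Adjoint-Con2}. The only points requiring a word of care are bookkeeping ones: one must make sure a ``disconnected'' decomposition in the sense used is genuinely nontrivial (both parts nonempty), which is exactly what the proofs of \autoref{Adjoint-Con1} and \autoref{Adjoint-Con2} guarantee—in the former, the argument explicitly establishes $E_1\ne\emptyset$ and $E_2\ne\emptyset$—and one should note that the hypothesis ``$M$ loopless'' is needed to invoke both lemmas and is inherited by the summands $M_i$. I would therefore write the proof in two or three sentences, simply citing \autoref{Adjoint-Con1} for one direction and \autoref{Adjoint-Con2} for the other and observing that the resulting summands are nontrivial in each case.
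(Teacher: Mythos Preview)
Your proposal is correct and matches the paper's own treatment exactly: the paper simply states that the corollary is a straightforward consequence of \autoref{Adjoint-Con1} and \autoref{Adjoint-Con2}, without giving any further argument. Your contrapositive unpacking of the two directions is precisely the intended reading, and the bookkeeping remarks about nontriviality of the summands are appropriate.
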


Below explains a close connection between the existence of  an adjoint and the existence of an adjoint of  every connected submatroid of the original matroid.
\begin{corollary}\label{Adjoint-Minor}
Let $M$ be a loopless matroid and write as a direct sum of  its connected components $M_1,\ldots, M_n$. Then $M$ has an adjoint if and only if each connected component $M_i$ has an  adjoint.
\end{corollary}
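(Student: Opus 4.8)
The plan is to establish both directions by reducing to the case of two summands and then iterating, using \autoref{Adjoint-Con2} for the forward implication and \autoref{Adjoint-Cha} to manufacture an adjoint of a direct sum. For necessity, suppose $M$ has an adjoint and write $M=M_1\oplus(M_2\oplus\cdots\oplus M_n)$; \autoref{Adjoint-Con2} then produces an adjoint of $M_1$ and an adjoint of the matroid $M_2\oplus\cdots\oplus M_n$, which is loopless (a submatroid of $M$) and has one fewer connected component, so induction on $n$ equips every $M_i$ with an adjoint. The base case $n=1$ is trivial.

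For sufficiency --- the direction that requires an actual construction --- the same peeling reduces the task to the following claim: if loopless matroids $M_1$ and $M_2$ admit adjoints $adM_1$ and $adM_2$, then $M_1\oplus M_2$ admits the adjoint $adM_1\oplus adM_2$. I would verify this with \autoref{Adjoint-Cha}. First, $r(adM_1\oplus adM_2)=r(adM_1)+r(adM_2)=r(M_1)+r(M_2)=r(M_1\oplus M_2)$, so the ranks agree. Next, the hyperplanes of $M_1\oplus M_2$ are exactly the sets $H_1\sqcup E(M_2)$ with $H_1\in\mathcal{H}(M_1)$ together with the sets $E(M_1)\sqcup H_2$ with $H_2\in\mathcal{H}(M_2)$, so $\mathcal{H}(M_1\oplus M_2)$ is canonically the disjoint union $\mathcal{H}(M_1)\sqcup\mathcal{H}(M_2)=E(adM_1)\sqcup E(adM_2)=E(adM_1\oplus adM_2)$, which is precisely the ground-set identification demanded by \autoref{Adjoint-Cha}. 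Finally, fix $e\in E(M_1)$ (the case $e\in E(M_2)$ being symmetric): every hyperplane of the form $E(M_1)\sqcup H_2$ contains $e$, whereas $H_1\sqcup E(M_2)$ contains $e$ if and only if $e\in H_1$; hence under the identification above $H[e]=\{H_1\in\mathcal{H}(M_1)\mid e\in H_1\}\sqcup\mathcal{H}(M_2)$, which is a hyperplane of $adM_1$ (by \autoref{Adjoint-Cha} applied to $M_1$) together with the entire ground set of $adM_2$, i.e.\ a hyperplane of $adM_1\oplus adM_2$. Thus \autoref{Adjoint-Cha} certifies $adM_1\oplus adM_2$ as an adjoint of $M_1\oplus M_2$, and iterating gives $adM=adM_1\oplus\cdots\oplus adM_n$.

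I expect no genuine obstacle here; the argument is bookkeeping built on the rank additivity of direct sums and on \autoref{Adjoint-Con2}. The only point demanding care is keeping the two identifications $\mathcal{H}(M_1\oplus M_2)\cong\mathcal{H}(M_1)\sqcup\mathcal{H}(M_2)$ and $E(adM_1\oplus adM_2)=E(adM_1)\sqcup E(adM_2)$ compatible, so that the sets $H[e]$ come out as honest hyperplanes of the direct sum rather than merely flats; once the hyperplane description of a direct sum of matroids is in hand, this is immediate.
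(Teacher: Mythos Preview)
Your argument is correct and close in spirit to the paper's, but the construction direction is handled by a genuinely different device. The paper builds the adjoint map directly from \autoref{Adjoint-Def}: with $adM=adM_1\oplus\cdots\oplus adM_n$ it defines $\phi:\mathcal{L}(M)\to\mathcal{L}(adM)$ by $\phi\big(\bigsqcup_iX_i\big)=\bigsqcup_i\phi_i(X_i)$ and checks that $\phi$ is injective, order-reversing, and carries coatoms bijectively onto atoms. You instead invoke the hyperplane criterion of \autoref{Adjoint-Cha}, computing $H[e]$ in the direct sum and recognising it as a hyperplane of $adM_1\oplus adM_2$. Both routes are short; yours has the minor advantage of exercising \autoref{Adjoint-Cha} (which the paper introduced for exactly this kind of bookkeeping), while the paper's is marginally more self-contained. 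For the other implication you cite \autoref{Adjoint-Con2}, which is indeed the lemma that does the work; the paper's citation of \autoref{Adjoint-Con1} there appears to be a slip, since that theorem takes a decomposition of $adM$ rather than of $M$ as its hypothesis.
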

\begin{proof}
\autoref{Adjoint-Con1} has verified the sufficiency. For necessity, let $adM_i$ be an adjoint of $M_i$,  $\phi_i:\mathcal{L}(M_i)\to\mathcal{L}(adM_i)$ be the adjoint map for each $i\in[n]$, and $adM=adM_1\oplus\cdots\oplus adM_n$. Define a map $\phi:\mathcal{L}(M)\to\mathcal{L}(adM)$
such that for any flat $X=\bigsqcup_{i=1}^nX_i$ with $X_i\in\mathcal{L}(M_i)$, $\phi(X)=\bigsqcup_{i=1}^n\phi_i(X_i)$. It is clear that $r(M)=\sum_{i=1}^nr(M_i)=\sum_{i=1}^nr(adM_i)=r(adM)$. Moreover, all adjoint maps $\phi_i$ guarantee that $\phi$ is injective, order-reversing as well as onto $E(adM)$. So $adM$ is an adjoint of $M$.
\end{proof}
\section{Modular matroids}\label{Sec-4}
In this section, we shall focus on investigating modular matroids and their adjoints. Intuitively, the construction of an adjoint is closely related to modular matroids. The procedure from a matroid to its an adjoint will produce many more modular pairs and even many more modular flats. More precisely, taking two distinct flats $X$ and $Y$ of a matroid $M$, we know that their ranks satisfy the following submodular inequality
 \[
 r_M(X\vee_{\mathcal{L}(M)} Y)+r_M(X\wedge_{\mathcal{L}(M)} Y)\le r_M(X)+r_M(Y).
 \]
The equation does not always hold. When this holds, we call $(X,Y)$ a modular pair of flats. If all pairs of flats of $M$ are modular, $M$ is said to be a {\em modular matroid}. Thus,  an adjoint $adM$ of a matroid $M$ is constructed by adding some elements into the geometric lattice $\mathcal{L}(M)$ such that $r_{adM}(\phi(X)\vee_{\mathcal{L}(adM)}\phi(Y))+r_M(\phi(X)\wedge_{\mathcal{L}(adM)}\phi(Y))= r_{adM}(\phi(X))+r_{adM}(\phi(Y))$ for all flats $X,Y\in\mathcal{L}(M)$.

First noting the fact that if $M$ is a modular matroid, then the opposite lattice $\mathcal{L}(M)^{op}$ is a geometric lattice. Immediately, $\mathcal{L}(M)^{op}$ determines a matroid $adM$ that is exactly an adjoint of $M$ in this case.  Conversely, if a matroid $M$ has an adjoint $adM$ such that $\mathcal{L}(adM)\cong\mathcal{L}(M)^{op}$. Then $\mathcal{L}(M)^{op}$ is a geometric lattice. This further implies that $M$ is modular in the case. An immediate result from the preceding arguments is stated as folows.
\begin{proposition}\label{Modular-Unique}
Let $M$ be a simple matroid. Then $M$ is modular  if and only if  $M$ has an adjoint $adM$ such that $\mathcal{L}(adM)\cong\mathcal{L}(M)^{op}$. 
\end{proposition}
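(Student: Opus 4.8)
The plan is to prove the two directions separately, using the connection between opposite lattices, geometric lattices, and adjoints that the surrounding discussion has already sketched. Recall that a finite lattice is geometric precisely when it is atomistic and (upper) semimodular, and that a simple matroid is the same data as a (finite) geometric lattice via $M \mapsto \mathcal{L}(M)$.

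\medskip

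\emph{Necessity.} Suppose $M$ is modular. Then for any two flats $X,Y \in \mathcal{L}(M)$ the modular equality $r_M(X \vee Y) + r_M(X \wedge Y) = r_M(X) + r_M(Y)$ holds, which is exactly the statement that $\mathcal{L}(M)$ is a modular lattice. A modular lattice is self-dual as a lattice property, so $\mathcal{L}(M)^{op}$ is again modular; moreover $\mathcal{L}(M)^{op}$ is graded with rank function $X \mapsto r(M) - r_M(X)$, its atoms are the coatoms (hyperplanes) of $M$, and every element is a join of atoms because in a modular (indeed geometric) lattice every flat is the meet of the hyperplanes above it, which dualizes to a join of atoms in $\mathcal{L}(M)^{op}$. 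Hence $\mathcal{L}(M)^{op}$ is atomistic and modular, therefore geometric, so it is $\mathcal{L}(adM)$ for a (unique, simple) matroid $adM$. It remains to produce the order-reversing injection of \autoref{Adjoint-Def}: take $\phi$ to be the identity map $\mathcal{L}(M) \to \mathcal{L}(M)^{op} = \mathcal{L}(adM)$, which is a bijection, is order-reversing by construction of the opposite order, and carries coatoms of $\mathcal{L}(M)$ onto atoms of $\mathcal{L}(adM)$. So $adM$ is an adjoint of $M$ with $\mathcal{L}(adM) \cong \mathcal{L}(M)^{op}$.

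\medskip

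\emph{Sufficiency.} Conversely, suppose $M$ has an adjoint $adM$ with $\mathcal{L}(adM) \cong \mathcal{L}(M)^{op}$. Since $\mathcal{L}(adM)$ is the lattice of flats of a matroid, it is a geometric lattice, hence in particular semimodular; transporting along the isomorphism, $\mathcal{L}(M)^{op}$ is semimodular. A lattice that is semimodular and whose opposite is also semimodular is modular, and $\mathcal{L}(M)$ is semimodular because $M$ is a matroid; therefore $\mathcal{L}(M)$ is a modular lattice, i.e.\ every pair of flats of $M$ is a modular pair, i.e.\ $M$ is a modular matroid.

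\medskip

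The only genuinely delicate point is verifying that $\mathcal{L}(M)^{op}$ is \emph{atomistic} (every element a join of atoms) in the necessity direction: geometricity is not automatic from modularity of the lattice, one also needs the atomistic condition, and the natural argument is to dualize the fact that in $\mathcal{L}(M)$ every flat is an intersection of hyperplanes. That fact is standard for geometric lattices, but I would spell it out: for a flat $X$ of $M$ with $X \neq E(M)$, $X = \bigcap\{H \in \mathcal{H}(M) : X \subseteq H\}$, and dualizing this meet of coatoms becomes a join of atoms in $\mathcal{L}(M)^{op}$, so $\mathcal{L}(M)^{op}$ is atomistic. Everything else is a routine appeal to the lattice-theoretic characterization of geometric lattices and to the self-duality of modularity; alternatively, one can shortcut the whole argument by quoting the standard fact that a geometric lattice has a geometric opposite if and only if it is modular, but writing out the short proof above keeps the paper self-contained.
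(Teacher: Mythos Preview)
Your proof is correct and follows essentially the same approach as the paper: both directions hinge on the equivalence ``$M$ is modular $\Leftrightarrow$ $\mathcal{L}(M)^{op}$ is geometric,'' together with the observation that when $\mathcal{L}(M)^{op}$ is geometric, the identity map $\mathcal{L}(M)\to\mathcal{L}(M)^{op}$ witnesses the adjoint conditions of \autoref{Adjoint-Def}. The paper treats both implications of that equivalence as known facts and states the proposition as an immediate consequence; you have spelled out the lattice-theoretic reasoning (atomisticity via ``every flat is a meet of hyperplanes,'' and modularity from upper plus lower semimodularity), which makes the argument self-contained but does not change its substance.
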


\autoref{Modular-Unique} states that a modular matroid has always an adjoint that is isomorphic to its opposite lattice. It is natural to ask if for some fixed modular matroid $M$, the adjoint given by the opposite lattice $\mathcal{L}(M)^{op}$ is a unique adjoint of $M$ up to isomorphism. To answer this problem, we need to introduce linear subclass and extension lattice of a matroid. In 1965, Crapo \cite{Crapo1965} used the linear subclasses of a matroid to characterize its all single-element extensions. A {\em linear subclass} $\mathcal{H}$ of  a matroid $M$ is a subset of its hyperplanes with the following property: if $H_1$ and $H_2$ are the members of $\mathcal{H}$ such that $r_M(H_1\cap H_2)=r(M)-2$, and $H_3$ is a hyperplane containing $H_1\cap H_2$, then $H_3\in\mathcal{H}$. All linear subclasses of $M$ form a lattice ordered by inclusion, called an {\em extension lattice} of $M$ and denoted by $\mathcal{E}(M)$.

In general, posets $\mathcal{P}_1$ and $\mathcal{P}_2$ are isomorphic each other if and only if there is a bijection $\theta:\mathcal{P}_1\to\mathcal{P}_2$ such that for any members $X,Y$ of $\mathcal{P}_1$,
\[
X\le_{\mathcal{P}_1} Y \mbox{ if and only if } \theta(X)\le_{\mathcal{P}_2}\theta(Y),
\]
denoted by $\mathcal{P}_1\cong\mathcal{P}_2$.
When $M$ is modular, the following result will show that the opposite lattice  $\mathcal{L}(M)^{op}$ of $M$ is isomorphic to its extension lattice $\mathcal{E}(M)$. Now let us recall the intersection property of a modular geometric lattice, which will be a crucial tool to prove the surjectivity in the proof of \autoref{Adjoint-Extension}. A modular geometric lattice $\mathcal{L}$ meets the {\em intersection property}: for any two distinct atoms $X,Y$ of $\mathcal{L}$ and an element $Z\in\mathcal{L}$, if $X\le_\mathcal{L}Y\vee_\mathcal{L}Z$, then there exists an atom $W$ of $\mathcal{L}$ such that $W\le_\mathcal{L}(X\vee_\mathcal{L}Y)\wedge_\mathcal{L}Z$. We are now turning to the following key lemma.
\begin{lemma}\label{Adjoint-Extension}
Let $M$ be a simple matroid of rank $r$.  If $M$ is modular, then the opposite lattice $\mathcal{L}(M)^{op}$ of $M$ is isomorphic to its extension lattice $\mathcal{E}(M)$.
\end{lemma}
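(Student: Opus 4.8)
The plan is to show that the natural map
$\theta\colon\mathcal{L}(M)^{op}\to\mathcal{E}(M)$ sending a flat $X$ of $M$ to the set $\theta(X):=\{H\in\mathcal{H}(M)\mid X\subseteq H\}$ of hyperplanes through $X$ (the same recipe as the map $\psi$ in the proof of \autoref{Adjoint-Cha}, now viewed with target $\mathcal{E}(M)$) is an isomorphism of posets. Two of the three required facts are formal and use neither modularity nor the intersection property. First, $\theta(X)$ is always a linear subclass: if $H_1,H_2\in\theta(X)$ with $r_M(H_1\cap H_2)=r-2$ and a hyperplane $H_3$ contains $H_1\cap H_2$, then $X\subseteq H_1\cap H_2\subseteq H_3$, so $H_3\in\theta(X)$. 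Second, $\theta$ is an order-embedding of $\mathcal{L}(M)^{op}$ into $\mathcal{E}(M)$: clearly $X\supseteq Y$ in $\mathcal{L}(M)$ gives $\theta(X)\subseteq\theta(Y)$, and conversely, since $M$ is simple every flat is the intersection of the hyperplanes above it, so $\theta(X)\subseteq\theta(Y)$ forces $X=\bigcap\theta(X)\supseteq\bigcap\theta(Y)=Y$; in particular $\theta$ is injective.

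What remains, and carries all the content, is surjectivity. Given a linear subclass $\mathcal{H}$, the only possible preimage is $X:=\bigcap_{H\in\mathcal{H}}H$ (with the convention $\bigcap\varnothing=E(M)$); as $\mathcal{H}\subseteq\theta(X)$ is immediate, the task is the reverse inclusion $\theta(X)\subseteq\mathcal{H}$, that is: every hyperplane $H^{*}$ of $M$ with $X\subseteq H^{*}$ lies in $\mathcal{H}$. I would prove this by induction on the rank $r$ of $M$. The cases $r\le 2$, and more generally $\corank(X)=r-r_M(X)\le 2$, are disposed of directly: $\corank(X)\in\{0,1\}$ forces $\mathcal{H}=\varnothing=\theta(E(M))$ resp.\ $\mathcal{H}=\{X\}=\theta(X)$, and if $\corank(X)=2$ then $\mathcal{H}$ has at least two members, any two of which meet exactly in $X$, so the defining closure property of a linear subclass already sweeps the entire pencil of hyperplanes through $X$ into $\mathcal{H}$. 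For $\corank(X)\ge 3$ it suffices, given $H^{*}\supseteq X$, to locate a corank-$2$ flat $W$ with $X\subseteq W\subseteq H^{*}$ which arises as the intersection of two \emph{distinct} members of $\mathcal{H}$: the closure property then forces the whole pencil through $W$ — in particular $H^{*}$ — into $\mathcal{H}$. One produces $W$ by passing to a smaller modular matroid: restricting to $H^{*}$ (or contracting a corank-$3$ flat lying between $X$ and $H^{*}$) gives a simple modular matroid of smaller rank carrying an induced linear subclass to which the induction hypothesis applies, which ultimately pins everything down to the rank-$3$ situation — a linear subclass of a modular plane all of whose members share only the empty flat must be the set of all lines.

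That rank-$3$ statement is where modularity genuinely enters, via the fact that any two lines of a modular plane meet in a point together with the intersection property of the modular geometric lattice $\mathcal{L}(M)$ (equivalently of $\mathcal{L}(M)^{op}$); manipulating this incidence so as to pull an arbitrary target line into $\mathcal{H}$ is the step I expect to be the main obstacle. Concretely, from three members of $\mathcal{H}$ with empty common intersection and a target line $H^{*}$: if $H^{*}$ passes through one of the (pairwise) intersection points we are done by the closure property; otherwise one of those points lies off one of the three lines, say off $H_1$, and then the line joining it to the point $H_1\cap H^{*}$ is itself forced into $\mathcal{H}$ (it lies in a pencil already contained in $\mathcal{H}$) and differs from $H_1$, supplying a second member of $\mathcal{H}$ through $H_1\cap H^{*}$; the closure property finishes. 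Everything else is routine rank arithmetic in a modular matroid ($r_M(A\cap B)+r_M(A\vee B)=r_M(A)+r_M(B)$, distinct hyperplanes meeting in a corank-$2$ flat, and the like). Once surjectivity is in hand, $\theta$ is a bijection that preserves and reflects order, i.e.\ the asserted isomorphism $\mathcal{L}(M)^{op}\cong\mathcal{E}(M)$.
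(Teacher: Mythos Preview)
Your map $\theta$, the verification that each $\theta(X)$ is a linear subclass, and the order-embedding argument all match the paper exactly (the paper calls the map $\lambda$). Your rank-$3$ argument for surjectivity is also correct and is a pleasant, concrete version of the ``pull a line into $\mathcal{H}$'' manoeuvre.

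The gap is in the reduction to rank $3$. You claim that restricting to $H^{*}$ (or contracting a corank-$3$ flat $Z$ between $X$ and $H^{*}$) produces an induced linear subclass to which the induction hypothesis applies, and that this locates the desired corank-$2$ flat $W\subseteq H^{*}$ arising as $H_{1}\cap H_{2}$ with $H_{1},H_{2}\in\mathcal{H}$ distinct. Neither route delivers this as stated. For the contraction: $\mathcal{H}_{Z}:=\{H\in\mathcal{H}:Z\subseteq H\}$ is indeed a linear subclass of $M/Z$, but nothing forces $\bigcap\mathcal{H}_{Z}=Z$; if $\bigcap\mathcal{H}_{Z}$ is strictly larger (a single line of $M/Z$, or all of $M/Z$ when few members of $\mathcal{H}$ happen to contain your chosen $Z$), the rank-$3$ argument does not get off the ground, and choosing $Z$ to be an intersection of three members of $\mathcal{H}$ instead loses the containment $Z\subseteq H^{*}$. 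For the restriction: one can check that $\mathcal{H}':=\{H\cap H^{*}:H\in\mathcal{H}\}$ \emph{is} a linear subclass of $M|H^{*}$ with $\bigcap\mathcal{H}'=X$, and induction then tells you every corank-$2$ flat $W$ with $X\subseteq W\subseteq H^{*}$ equals $H_{W}\cap H^{*}$ for \emph{some} $H_{W}\in\mathcal{H}$ --- but that is only one member of $\mathcal{H}$ through $W$, the other obvious hyperplane through $W$ being $H^{*}$ itself. In fact, under the standing assumption $H^{*}\notin\mathcal{H}$, each such $W$ lies in \emph{exactly one} member of $\mathcal{H}$ (two would force $H^{*}\in\mathcal{H}$ by the closure property), so the $W$ you are looking for simply does not exist inside $H^{*}$: the argument must leave $H^{*}$.

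The paper handles surjectivity by a different device. It fixes a descending chain $H_{1}\supsetneq H_{1}\cap H_{2}\supsetneq\cdots\supsetneq\bigcap_{j\le r-k}H_{j}=X$ built from members $H_{j}\in\mathcal{H}$, records the least index $i_{0}$ at which the chain drops into $H_{0}:=H^{*}$ (necessarily $i_{0}\ge 3$, else the closure property already gives $H_{0}\in\mathcal{H}$), and then uses the intersection property of the modular geometric lattice $\mathcal{L}(M)^{op}$ --- applied to the atoms $H_{i_{0}},H_{0}$ and the element $\bigcap_{j<i_{0}}H_{j}$ --- to manufacture a hyperplane $H_{0i_{0}}\supseteq\bigcap_{j<i_{0}}H_{j}$ with $H_{i_{0}}\cap H_{0}\subseteq H_{0i_{0}}$. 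If $H_{0i_{0}}\in\mathcal{H}$ one gets $H_{0}\in\mathcal{H}$ immediately; otherwise $H_{0i_{0}}\in\mathcal{H}_{X}\setminus\mathcal{H}$ is a new ``bad'' hyperplane whose corresponding least index is strictly smaller than $i_{0}$. Iterating gives a strictly decreasing sequence of indices in $[3,i_{0}]$, a contradiction. This descent on $i_{0}$ is precisely the mechanism that stands in for your missing reduction; if you want to rescue the inductive plan, this is where the missing idea has to come from.
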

\begin{proof}
Define a map $\lambda: \mathcal{L}(M)^{op}\to\mathcal{E}(M)$ sending each member  $X$ in $\mathcal{L}(M)^{op}$ to $\lambda(X)=\mathcal{H}_X(M)$, where $\mathcal{H}_X(M)=\{H\in\mathcal{H}(M)\mid X\subseteq H\}$. We begin by proving that $\lambda$ is well defined. For the both cases $r_M(X)=r,\,r-1$, $\mathcal{H}_X(M)$ are the linear subclasses of $M$ obviously. Note that $\mathcal{L}(M)^{op}$ is a modular geometric lattice since $M$ is modular. For $r_M(X)\le r-2$, we have $r_M(H_1\cap H_2)=r_M(H_1)+r_M(H_2)-r_M(H_1\vee_{\mathcal{L}(M)} H_2)=r-2$ for any distinct members $H_1,H_2\in\mathcal{H}_X(M)$.  With further step,  if $H_1\cap H_2\subseteq H_3$, we have $X\subseteq H_1\cap H_2\subseteq H_3$. Then $H_3\in\mathcal{H}_X(M)$ and $\mathcal{H}_X(M)$ is a linear subclass in this case. So $\lambda$ is well defined.

To prove $\mathcal{L}(M)^{op}\cong\mathcal{E}(M)$, we shall show that $\lambda$ is order-preserving, injective and surjective in turn. If $X\le_{\mathcal{L}(M)^{op}} Y$ in $\mathcal{L}(M)^{op}$, then we have $Y\subseteq X$. This means that if the hyperplane $H$ of $M$ contains $X$, then $H$ must contain $Y$. Namely, $\mathcal{H}_X(M)\subseteq \mathcal{H}_Y(M)$ in $\mathcal{E}(M)$. So $\lambda$ is an order-preserving map. Suppose $X\ne Y$, then $\mathcal{H}_X(M)\ne\mathcal{H}_Y(M)$ obviously. That is, $\lambda$ is an injection.

Now we are ready to prove the surjectivity of $\lambda$. Given a linear subclass $\mathcal{H}\subseteq\mathcal{H}(M)$ of $M$, let $X_\mathcal{H}=\bigcap_{H\in\mathcal{H}}H$. Obviously, $X_\mathcal{H}\in \mathcal{L}(M)^{op}$. It remains to verify $\lambda(X_{\mathcal{H}})=\mathcal{H}_{X_\mathcal{H}}=\mathcal{H}$.
The both cases $r_M(X_\mathcal{H})=r,r-1$ are trivial. So let $r_M(X_\mathcal{H})=k$ for some $0\le k\le r-2$.
According to the definition of  $\mathcal{H}_{X_\mathcal{H}}$, we arrive at $\mathcal{H}\subseteq\mathcal{H}_{X_\mathcal{H}}$.
Next we will show $\mathcal{H}\supseteq\mathcal{H}_{X_\mathcal{H}}$  in the case.  Suppose existing $H_0\in\mathcal{H}_{X_\mathcal{H}}\setminus\mathcal{H}$ satisfies that $H\cap H'$ is not contained in $H_0$ for any $H,H'\in\mathcal{H}$. Then from $X_{\mathcal{H}}\subseteq H_0$ we can choose a hyperplane sequence $H_1,H_2,\ldots,H_{r-k}$ of $M$ such that
\[
X_\mathcal{H}= \bigcap_{j=1}^{r-k}H_j\lessdot\bigcap_{j=1}^{r-k-1}H_j\lessdot\cdots\lessdot \bigcap_{j=1}^{i_0-1}H_j\lessdot \bigcap_{j=1}^{i_0}H_j\lessdot\cdots\lessdot H_1\bigcap H_2\lessdot H_1
\]
and $i_0$ is the minimal positive number for which
\begin{equation}\label{Intersction}
\bigcap_{j=1}^{i_0-1}H_j\nsubseteq H_0\quad\quad\And\quad\quad\bigcap_{j=1}^{i_0}H_j\subseteq H_0.
\end{equation}
Obviously, we have $i_0\ge 3$ and $\bigcap_{j=1}^{i_0}H_j\subseteq\bigcap_{j=1}^{i_0-1}H_j\bigcap H_0$. From the modularity of $M$, we know that $\mathcal{L}(M)^{op}$ is a modular geometric lattice and  $\mathcal{H}(M)$ is the set of all its atoms. So $\mathcal{L}(M)^{op}$ has the intersection property. From this property, we obtain from \eqref{Intersction} that there is a hyperplane $H_{0i_0}$ of $M$ such that
\[
H_{i_0}\bigcap H_0\subseteq H_{0i_0}\quad\quad\And\quad\quad\bigcap_{j=1}^{i_0-1}H_j\subseteq H_{0i_0}.
\]
If $H_{0i_0}\in\mathcal{H}$, then we can get $H_{i_0}\cap H_{0i_0}\subseteq H_0$. That is, $H_0\in\mathcal{H}$, a contradiction. Hence, $H_{0i_0}\in\mathcal{H}_{X_\mathcal{H}}\setminus\mathcal{H}$ since $X_{\mathcal{H}}\subseteq H_{i_0}\cap H_0\subseteq H_{0i_0}$. Moreover, the minimality of $i_0$ guarantees $H_{0i_0}\ne H_0$. Then we can replace $H_0$ with $H_{0i_0}$. Using the same argument as in the case $H_0$, we can arrive at a positive integer $i_1<i_0$ and $H_{i_0i_1}\ne H_0, H_{0i_0}$ having the same properties as $i_0$ and $H_{0i_0}$, respectively. The same step can go on and on. Then we can obtain infinitely many distinct integers $i_0,i_1,\ldots$ lying in the  interval $[3,i_0]$, which contradicts  the fact that this interval $[3,i_0]$ contains only finitely many integers. The proof is completed.
\end{proof}

\autoref{Adjoint-Extension} indicates that a modular matroid has only one adjoint up to isomorphism, which is given by its opposite lattice. The property makes modular matroids become a key ingredient in the classification problem of adjoint sequences later.
\begin{theorem}\label{Modular-Unique-1}
Let $M$ be a simple matroid. If $M$ is modular, then $M$ has only one adjoint $adM$ up to isomorphism and $\mathcal{L}(adM)\cong\mathcal{L}(M)^{op}$. 
\end{theorem}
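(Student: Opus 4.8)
The statement splits into existence and uniqueness, and only the latter requires work. For existence, apply \autoref{Modular-Unique}: since $M$ is simple and modular it has an adjoint $adM$ with $\mathcal{L}(adM)\cong\mathcal{L}(M)^{op}$ (equivalently, the geometric lattice $\mathcal{L}(M)^{op}$ is itself the lattice of flats of such an adjoint). Thus the whole content is to show that \emph{every} adjoint $N$ of $M$ has $\mathcal{L}(N)\cong\mathcal{L}(M)^{op}$; because adjoints are always simple and a simple matroid is determined up to isomorphism by its lattice of flats, this forces all adjoints of $M$ to be mutually isomorphic, which is exactly what is claimed.

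For the uniqueness step the plan is to invoke the extension-lattice characterization of adjoints recalled in \autoref{Sec-1} (Cheung; Alfter--Kern--Wanka): if $N$ is an adjoint of $M$, then $\mathcal{L}(N)$ embeds as a lattice into $\mathcal{E}(M)$ in such a way that the atoms of $\mathcal{L}(N)$ are carried bijectively onto the atoms of $\mathcal{E}(M)$. Composing this embedding with the isomorphism $\mathcal{E}(M)\cong\mathcal{L}(M)^{op}$ furnished by \autoref{Adjoint-Extension}, one obtains a lattice embedding $\iota\colon\mathcal{L}(N)\hookrightarrow\mathcal{L}(M)^{op}$ that is still a bijection on atoms. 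Now $\mathcal{L}(M)^{op}$ is a geometric lattice — this is where modularity is used, as noted just before \autoref{Modular-Unique} — and its rank equals that of $\mathcal{L}(M)$, namely $r=r(M)=r(N)$, the rank of $\mathcal{L}(N)$.

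It then remains to upgrade $\iota$ from an embedding to an isomorphism, and this is the step I expect to need the most care. Geometric lattices are atomistic, so every $p\in\mathcal{L}(M)^{op}$ is a join of atoms; pulling those atoms back through the atom-bijection to atoms of $\mathcal{L}(N)$ and using that $\iota$ preserves finite joins exhibits $p$ as lying in the image of $\iota$, so $\iota$ is onto and hence a lattice isomorphism, giving $\mathcal{L}(N)\cong\mathcal{L}(M)^{op}$. The subtlety is precisely that it is not enough for $\iota$ to be an order-embedding that is bijective on atoms between two geometric lattices of equal rank: one must extract from the characterization that $\iota$ genuinely preserves joins, and keep track that the atom bijection is compatible with the composed map; granting join-preservation, atomisticity together with the equality of ranks closes the argument.

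To sidestep this subtlety entirely one can argue directly with the adjoint map. Let $\phi\colon\mathcal{L}(M)\to\mathcal{L}(N)$ be the adjoint map of an arbitrary adjoint $N$; it is injective, order-reversing, and $r_{N}(\phi(X))=r-r_{M}(X)$ by \autoref{Adjoint-Prop}(ii). For flats $X,Y$ of $M$, combining \autoref{Adjoint-Prop}(iii),(iv) with the modular rank identity $r_{M}(X)+r_{M}(Y)=r_{M}(X\vee_{\mathcal{L}(M)}Y)+r_{M}(X\wedge_{\mathcal{L}(M)}Y)$ yields $r_{N}\big(\phi(X)\vee_{\mathcal{L}(N)}\phi(Y)\big)=r-r_{M}(X\wedge_{\mathcal{L}(M)}Y)=r_{N}\big(\phi(X\wedge_{\mathcal{L}(M)}Y)\big)$; since $\phi$ is order-reversing we also have $\phi(X)\vee_{\mathcal{L}(N)}\phi(Y)\le\phi(X\wedge_{\mathcal{L}(M)}Y)$, so these flats coincide. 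Iterating gives $\phi(H_{1}\cap\cdots\cap H_{m})=\phi(H_{1})\vee\cdots\vee\phi(H_{m})$ for any hyperplanes $H_{i}$ of $M$. As the simple matroid $N$ has every flat equal to the join of the atoms below it, the atoms of $\mathcal{L}(N)$ are exactly the flats $\phi(H)$ with $H\in\mathcal{H}(M)$, and every flat of the simple matroid $M$ is an intersection of hyperplanes, it follows that $\phi$ is onto. Finally \autoref{Adjoint-Prop}(iii) with injectivity shows $\phi(X)\le\phi(Y)\Rightarrow Y\le X$, so $\phi$ is a bijective, order-reversing, order-reflecting map — an anti-isomorphism — whence $\mathcal{L}(N)\cong\mathcal{L}(M)^{op}$, completing the proof.
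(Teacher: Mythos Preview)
Your proof is correct. Your first approach via the extension lattice is essentially the paper's route, though the paper handles the ``upgrade from embedding to isomorphism'' differently: rather than checking join-preservation of $\iota$, it uses a sandwich argument. The paper observes that $\mathcal{L}(M)^{op}$ always sits as a sublattice of $\mathcal{L}(adM)$ (this is the adjoint map itself), and $\mathcal{L}(adM)$ sits as a sublattice of $\mathcal{E}(M)$ (Cheung); since \autoref{Adjoint-Extension} gives $\mathcal{L}(M)^{op}\cong\mathcal{E}(M)$, finiteness squeezes $\mathcal{L}(adM)$ to be isomorphic to both. This avoids the join-preservation subtlety you flagged.

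Your second approach, however, is a genuinely different and more elementary argument. By combining \autoref{Adjoint-Prop}(ii)--(iv) with the modular rank identity you show directly that the adjoint map $\phi$ satisfies $\phi(X\wedge Y)=\phi(X)\vee\phi(Y)$, and then atomisticity on both sides forces $\phi$ to be an anti-isomorphism. This bypasses the extension lattice entirely and in particular does not rely on \autoref{Adjoint-Extension}, whose proof is the most technical part of the section. What the paper's route buys is that \autoref{Adjoint-Extension} is of independent interest (it identifies $\mathcal{E}(M)$ explicitly for modular $M$); what your second route buys is a self-contained proof of the uniqueness theorem using only the basic properties of adjoint maps already recorded in \autoref{Adjoint-Prop}.
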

\begin{proof}
Recall from \autoref{Modular-Unique} that the modular matroid $M$ has always an adjoint. Given an adjoint $adM$ of $M$. Recall from \cite{Cheung1974}  that $\mathcal{L}(adM)$ can be viewed as a sublattice of the extension lattice $\mathcal{E}(M)$. Combining with the fact that the opposite lattice $\mathcal{L}(M)^{op}$ of $M$ is always regarded as a sublattice of $\mathcal{L}(adM)$. Then, when $M$ is modular,  $\mathcal{L}(M)^{op}\cong\mathcal{E}(M)$ in \autoref{Adjoint-Extension} directly leads to that $\mathcal{L}(adM)\cong\mathcal{L}(M)^{op}$. This further implies that $M$ has only one adjoint up to isomorphism.
\end{proof}

From the classical projective geometry \cite{Birkhoff1967}, we know that connected modular matroids can be identified with projective geometries except for free matroids.  Let $P$ and $L$ be the disjoint set of points and lines, respectively, and $\iota$ is an incidence relation between the points and the lines. If the triple $(P,L,\iota)$ satisfies the following incidence relations
\begin{itemize}
  \item [\rm(p1)] Every two distinct points, $a$ and $b$, are on exactly one line $a\iota b$,
  \item [\rm(p2)] Every line contains at least three points,
  \item [\rm(p3)] If $a,b,c$ and $d$ are four distinct points, no three of which are collinear, and if the line $a\iota b$ intersects the line $c\iota d$, then the line $a\iota c$ intersects the line $b\iota d$,
\end{itemize}
then the triple $(P,L,\iota)$ is called a {\em projective geometry}. In particular,  the simple matroid associated with the vector space $\mathbb{F}^r$ is a projective geometry, denoted by $PG(r-1,\mathbb{F})$. When $\mathbb{F}$ is the finite field $GF(q)$ with $q$ elements, it is customary to denote this projective geometry by $PG(r-1,q)$.

A subspace of a projective geometry $(P,L,\iota)$  is a subset $P'\subseteq P$ such that if $a$ and $b$ are distinct points of $P'$, then all points on the line $a\iota b$ are in $P'$. Let $\mathcal{L}(P)$ be the poset of all subspaces of  the projective geometry $(P,L,\iota)$, ordered by inclusion. From the classical lattice theory, we know that $\mathcal{L}(P)$ is a modular geometric lattice.  Let  $(P,L,\iota)$ be a projective geometry of rank $r$. The {\em dual geometry} $(P^*,L^*,\iota^*)$ of  $(P,L,\iota)$ is given by that all the coatoms of $\mathcal{L}(P)$ are regarded as the points of  $(P^*,L^*,\iota^*)$, and $a\iota^* b$ is a line in $L^*$ for any distinct points $a,b\in P^*$ if and only if $r_{\mathcal{L}(P)}(a\cap b)=r-2$.  In 1967, Birkhoff \cite{Birkhoff1967} showed that using the finite projective geometries can characterize the modular matroids.
\begin{proposition}[\cite{Birkhoff1967,Oxley}]\label{Modular-1}
Let $M$ be a simple matroid. Then $M$ is modular if and only if its every connected component is either the free matroid  $U_{1,1}$  or a finite projective geometry.
\end{proposition}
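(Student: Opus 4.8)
The plan is to prove the two implications separately, each time passing to the connected components. For the ``if'' direction I would first record that a direct sum of modular matroids is modular: if $M=\bigoplus_i M_i$, then every flat of $M$ is a disjoint union $\bigsqcup_i X_i$ with $X_i\in\mathcal{L}(M_i)$, joins and meets are computed componentwise, and $r_M=\sum_i r_{M_i}$, so the submodular equality for $\big(\bigsqcup_i X_i,\bigsqcup_i Y_i\big)$ is just the sum over $i$ of the submodular equalities for the pairs $(X_i,Y_i)$. It then remains to note that $U_{1,1}$ is modular for trivial reasons (its only flats are $\emptyset$ and its ground set), and that a finite projective geometry $(P,L,\iota)$ is modular because, as recalled above, its subspace lattice $\mathcal{L}(P)$ is a modular geometric lattice, and $\mathcal{L}(P)$ is the lattice of flats of the matroid $(P,L,\iota)$.

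For the ``only if'' direction I would first observe that modularity descends to each connected component $M_i$: for flats $X,Y$ of $M_i$, the sets $X\sqcup\bigsqcup_{j\neq i}\emptyset$ and $Y\sqcup\bigsqcup_{j\neq i}\emptyset$ are flats of $M$ (here $M$ is loopless, so $\emptyset\in\mathcal{L}(M_j)$ for every $j$), and they form a modular pair in $M$ precisely when $(X,Y)$ is a modular pair in $M_i$. So it suffices to show that a connected simple modular matroid $M$ is either $U_{1,1}$ or a finite projective geometry. If $r(M)\le 1$, then $M$, being nonempty and simple, consists of a single point, i.e.\ $M=U_{1,1}$. If $r(M)\ge 2$, then $M$ is not the direct sum of two nonempty matroids, so $\mathcal{L}(M)$ is indecomposable as a direct product; it is also modular, and finite. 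At this point I would invoke the classical structure theory of modular geometric lattices (Birkhoff \cite{Birkhoff1967}; see also \cite{Oxley}): a modular geometric lattice that is indecomposable as a direct product and has rank $\ge 2$ is the subspace lattice of a projective geometry of the same rank, and that geometry is finite here because $\mathcal{L}(M)$ is. Since $M$ is simple, its points are exactly the atoms of $\mathcal{L}(M)$, which are the points of that geometry, so $M$ is a finite projective geometry, as required.

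If one wants a self-contained treatment of this last step, one can check directly that the atoms (``points'') and rank-$2$ flats (``lines'') of $\mathcal{L}(M)$ obey (p1)--(p3). Axiom (p1) holds in every matroid, since two distinct atoms lie below a unique rank-$2$ flat. Axiom (p3) drops out of modularity: if $a,b,c,d$ are atoms with no three collinear and the lines $a\vee b$, $c\vee d$ share a point, then $r(a\vee b\vee c\vee d)=2+2-1=3$, hence $r\big((a\vee c)\wedge(b\vee d)\big)=2+2-3=1$, so $a\vee c$ and $b\vee d$ share a point. Axiom (p2) --- every line contains at least three points --- is the one that genuinely uses connectedness: one argues that a $2$-point line in a connected matroid of rank $\ge 3$ would produce a pair of coplanar lines violating the modular-pair condition.

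I expect the crux to be exactly the identification of an indecomposable modular geometric lattice of rank $\ge 2$ with a projective geometry; this is the substantive, classical input, furnished by \cite{Birkhoff1967,Oxley} (with Veblen--Young coordinatization giving the sharper ``over a division ring'' statement in rank $\ge 4$, a projective plane in rank $3$, and a line with at least three points in rank $2$). The other ingredients --- the behaviour of modularity under direct sums, the degenerate rank-$\le 1$ case, and the reduction to connected matroids --- are routine, but they are precisely what makes this lemma usable in the classification of adjoint sequences.
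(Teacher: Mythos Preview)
The paper does not supply a proof of this proposition at all; it is quoted as a classical result with citations to Birkhoff \cite{Birkhoff1967} and Oxley \cite{Oxley}, and then used as a black box. Your proposal therefore goes beyond what the paper does: you give a correct outline of the standard argument, namely the (routine) reduction to connected components via compatibility of modularity with direct sums, the trivial rank~$\le 1$ case, and then the appeal to Birkhoff's structure theorem identifying an indecomposable modular geometric lattice of rank~$\ge 2$ with the subspace lattice of a projective geometry. Your optional self-contained verification of (p1)--(p3) is also on the right track; the only place I would tighten is the sketch for (p2), where the clean statement is that a two-point line $\{a,b\}$ in a modular geometric lattice forces a direct-product decomposition (equivalently, one finds two coplanar lines with empty meet, contradicting modularity in a connected matroid of rank~$\ge 3$). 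Since you explicitly defer the substantive step to \cite{Birkhoff1967,Oxley} in any case, this is a matter of exposition rather than a gap.
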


From this perspective, \autoref{Adjoint-Extension} implies that if $M$ is a connected modular matroid, then the extension lattice $\mathcal{E}(M)$ coincides with the lattice consisting of  all the subspace of the dual geometry of $M$. More specifically, hyperplanes of $M$ can be identified with points and members of $\mathcal{E}(M)$ with rank two can be regarded as lines in this case.  Immediately, these points, lines and inclusion relations between them will determine a projective geometry, which is precisely the dual geometry of $M$. Then the following proposition is a direct consequence of \autoref{Modular-Unique}, which may refer to \cite[11.2.3 Proposition]{Faure2000}.
\begin{proposition}
Let $(P,L,\iota)$ be a finite projective geometry. Then the lattice of its dual geometry is isomorphic to its opposite lattice. Moreover, the dual geometry of $(P,L,\iota)$ is a finite projective geometry.
\end{proposition}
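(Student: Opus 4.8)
The plan is to identify the dual geometry $(P^\ast,L^\ast,\iota^\ast)$ with an adjoint of the matroid attached to $(P,L,\iota)$ and then invoke the structure results already in place. Write $M$ for the simple matroid with $\mathcal{L}(M)=\mathcal{L}(P)$, so that $E(M)=P$ and the flats of $M$ are the subspaces of $(P,L,\iota)$. Since $\mathcal{L}(P)$ is a modular geometric lattice, $M$ is modular, and for $r\ge 2$ it is connected (the case $r\le 1$, a single point with single-point dual, being immediate). By \autoref{Modular-Unique} and \autoref{Modular-Unique-1}, $M$ has a unique adjoint $adM$ with $\mathcal{L}(adM)\cong\mathcal{L}(M)^{op}$, and by \autoref{Adjoint-Cha} we may realize it on $E(adM)=\mathcal{H}(M)$ with adjoint map $\phi(H)=H$; thus the points of $adM$ are exactly the hyperplanes of $M$.

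Next I would match $(P^\ast,L^\ast,\iota^\ast)$ and $adM$ point by point and line by line. The points of $(P^\ast,L^\ast,\iota^\ast)$ are the coatoms of $\mathcal{L}(P)$, i.e.\ the hyperplanes of $M$, i.e.\ the points of $adM$. Modularity of $M$ gives $r_M(a\cap b)=r-2$ for all distinct hyperplanes $a,b$, so $F\mapsto\{H\in\mathcal{H}(M):F\subseteq H\}$ is a bijection from the rank-$(r-2)$ flats of $M$ onto $L^\ast$; under the order-reversing isomorphism $\mathcal{L}(M)^{op}\cong\mathcal{L}(adM)$ such an $F$---which sits at height $2$ in $\mathcal{L}(M)^{op}$---corresponds to the rank-$2$ flat of $adM$ whose points are precisely the hyperplanes of $M$ containing $F$, i.e.\ to the line $a\iota^\ast b$ for any $a,b$ with $a\cap b=F$; and the incidence ``$H$ lies on $a\iota^\ast b$'' (equivalently $a\cap b\subseteq H$) transports, under the same isomorphism, to ``$H$ lies on the corresponding line of $adM$''. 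Hence the line-closure of $(P^\ast,L^\ast,\iota^\ast)$ agrees with the matroid closure of $adM$ on flats of rank at most $2$. This is essentially the assertion, made in the paragraph preceding the proposition, that $\mathcal{E}(M)$ is the subspace lattice of the dual geometry of $M$, now made precise through $adM$ via \autoref{Adjoint-Extension}.

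The main---though routine---obstacle is to promote this into an identification of full lattices: one must check that any $S\subseteq\mathcal{H}(M)$ closed under the line-closure of $(P^\ast,L^\ast,\iota^\ast)$ is already a flat of $adM$. I would argue inside the modular geometric lattice $\mathcal{L}(adM)$: writing the join of $S$ as an irredundant join $a_1\vee\cdots\vee a_k$ of atoms lying in $S$, one shows by induction on $k$---invoking the intersection property of modular geometric lattices recalled just before \autoref{Adjoint-Extension}---that every atom below $a_1\vee\cdots\vee a_k$ belongs to $S$. Granting this, the subspaces of $(P^\ast,L^\ast,\iota^\ast)$ are exactly the flats of $adM$, so $\mathcal{L}(P^\ast)\cong\mathcal{L}(adM)\cong\mathcal{L}(M)^{op}=\mathcal{L}(P)^{op}$, which gives the first assertion. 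For the ``moreover'', $\mathcal{L}(adM)\cong\mathcal{L}(M)^{op}$ is a modular geometric lattice, so $adM$ is a simple modular matroid, and it is connected by \autoref{Adjoint-Con3} since $M$ is; by \autoref{Modular-1} it is therefore a finite projective geometry ($\mathcal{H}(M)$ being finite), and under the identification above this is precisely $(P^\ast,L^\ast,\iota^\ast)$.
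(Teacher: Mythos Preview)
Your proof is correct and follows the same route the paper sketches: the paper derives the proposition as a ``direct consequence'' of \autoref{Modular-Unique} together with the paragraph preceding it, which asserts (via \autoref{Adjoint-Extension}) that for a connected modular $M$ the extension lattice $\mathcal{E}(M)$ coincides with the subspace lattice of the dual geometry. You make this precise by working through the adjoint $adM$ rather than $\mathcal{E}(M)$ directly, but since $\mathcal{L}(adM)\cong\mathcal{E}(M)\cong\mathcal{L}(M)^{op}$ here, the two viewpoints are interchangeable. The one place where you add genuine content beyond the paper is the verification that line-closed subsets of $\mathcal{H}(M)$ are exactly the flats of $adM$; the paper simply asserts the identification of $\mathcal{E}(M)$ with the dual subspace lattice, whereas your inductive argument via the intersection property actually justifies it.
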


Next we will describe the link between modular matroids and their adjoints. For this purpose, we require the classical Coordinatization Theorem  that a projective geometry of dimension at least three can be constructed as the projective geometry associated to a vector space over a field ,  which is also called Veblen-Young Theorem \cite{Veblen1965}.
\begin{theorem}[\cite{Veblen1965}, Coordinatization Theorem]\label{Veblen-Young}
Every projective geometry of rank $r\ge 4$ is isomorphic to $PG(r-1,\mathbb{F})$ for some field $\mathbb{F}$. In particular, every finite projective geometry of rank $r\ge4$ is isomorphic to $PG(r-1,q)$ for some finite field $GF(q)$ with $q$ elements.
\end{theorem}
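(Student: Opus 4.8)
The plan is to follow the classical synthetic-to-algebraic route of Veblen and Young; since the statement is a cited classical theorem, what follows is really a sketch of the standard argument rather than something to be written out in the paper. Observe first that the hypotheses (p1)--(p3) are exactly the axioms of an abstract projective space, with (p3) playing the role of Veblen's axiom (the projective form of Pasch). The crucial structural fact, and the reason the hypothesis is $r\ge 4$ rather than $r\ge 3$, is that \emph{Desargues's theorem holds automatically once the projective dimension is at least three}: given two triangles in perspective from a point, if the triangles do not already span a common plane one lifts the configuration and intersects planes to read off the Desargues line directly from incidence; if they do lie in a common plane $\pi$, one uses a point outside $\pi$ (available precisely because $r\ge 4$) to project the planar configuration to a non-coplanar one, apply the previous case, and project back. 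So the first step I would carry out is to establish Desargues's theorem from (p1)--(p3) together with $r\ge 4$.

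Second, I would coordinatize a line. Fix any line $\ell$ and three distinct points on it, labelled $0$, $1$, $\infty$. Using auxiliary points off $\ell$ and the perspectivity (central projection) operations between pencils, define addition and multiplication on the set $\mathbb{F}:=\ell\setminus\{\infty\}$ in the standard way. Then I would verify, by Desarguesian configuration chases, that $(\mathbb{F},+,\cdot)$ is a division ring: commutativity and associativity of $+$, associativity of $\cdot$, existence of inverses, and both distributive laws each reduce to an instance of Desargues or its converse. In the finite case (or if one additionally assumes Pappus's theorem) $\mathbb{F}$ is moreover commutative, i.e.\ a field; this is the technical heart of the whole argument.

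Third, I would build the isomorphism with $PG(r-1,\mathbb{F})$. Choose a projective frame: points $p_0,\dots,p_{r-1}$ in general position (spanning $P$, with every $r-1$ of them independent) together with a unit point $u$. Every point of $P$ then receives homogeneous coordinates in $\mathbb{F}^{r}$ by reading off, on suitable lines through the $p_i$, the division-ring coordinate constructed in the second step, and patching these local coordinates, again using Desargues to guarantee consistency. One then checks that the resulting map $P\to PG(r-1,\mathbb{F})$ is a bijection on points carrying collinear triples to collinear triples, hence an isomorphism of projective geometries (equivalently of the associated simple matroids and their geometric lattices). For the finite case it remains only to note that a finite $P$ forces $\mathbb{F}$ to be a finite division ring, whence Wedderburn's little theorem gives $\mathbb{F}\cong GF(q)$ and $P\cong PG(r-1,q)$.

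The hard part will be the second step: assembling the complete list of division-ring axioms out of Desargues configurations is correct but lengthy, and it is compounded by the bookkeeping in the third step needed to make the local coordinatizations agree on overlapping lines. Neither difficulty is conceptually deep, but together they are what make a self-contained proof long — which is exactly why one cites Veblen--Young here rather than reproving it.
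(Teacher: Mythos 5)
Your sketch is the standard Veblen--Young argument, and the paper does not prove this statement at all---it is cited from \cite{Veblen1965}---so there is no in-paper proof to compare against; your outline (Desargues from rank $\ge 4$, coordinatization of a line into a division ring via perspectivities, assembly of homogeneous coordinates, Wedderburn in the finite case) is the correct and canonical route. The one point worth flagging is that in the infinite case this argument yields only a division ring, not necessarily a commutative field, so the theorem as stated in the paper (``for some field $\mathbb{F}$'') is literally accurate only if $PG(r-1,\mathbb{F})$ is understood to allow skew fields; your proposal correctly isolates commutativity as requiring either finiteness (Wedderburn) or Pappus.
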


\begin{remark}\label{Adjoint-Remark}
{\rm Let $M$ be a simple connected matroid of rank $r$. \autoref{Modular-1} and \autoref{Veblen-Young} indicate that if $M$ is a modular matroid of rank $r\ge4$, then  $M$ is isomorphic to the projective geometry $PG(r-1,q)$, and the unique adjoint $adM$ is always isomorphic to the type I adjoint $\sigma PG(r-1,q)$ of $PG(r-1,q)$.}
\end{remark}
\begin{theorem}\label{Modular-Adjoint}
Let $M$ be a simple matroid with no rank $3$ connected components. Then $M$ is  modular if and only if $M$ has an adjoint $adM$ such that $adM\cong M$.
\end{theorem}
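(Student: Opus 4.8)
I would establish the two implications separately. The ``only if'' part (an adjoint isomorphic to $M$ forces $M$ modular) is a cardinality argument, while the ``if'' part reduces, via \autoref{Modular-1}, to checking that the flat lattices of $U_{1,1}$, of a rank-$2$ geometric lattice, and of $PG(r-1,q)$ with $r\ge 4$ are all self-dual.

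\emph{Sufficiency.} Suppose $adM$ is an adjoint of $M$ with $adM\cong M$, and let $\phi\colon\mathcal{L}(M)\to\mathcal{L}(adM)$ be an adjoint map; by \autoref{Adjoint-Def} it is injective and order-reversing, and by \autoref{Adjoint-Prop}(iii) it satisfies $\phi(X\vee_{\mathcal{L}(M)}Y)=\phi(X)\wedge_{\mathcal{L}(adM)}\phi(Y)$. Since $adM\cong M$ gives $|\mathcal{L}(adM)|=|\mathcal{L}(M)|$, the injection $\phi$ between finite sets of equal size is a bijection. The plan is then to promote $\phi$ to a lattice anti-isomorphism. For flats $X,Y$ of $M$, put $Z=\phi^{-1}\!\big(\phi(X)\vee_{\mathcal{L}(adM)}\phi(Y)\big)$; applying \autoref{Adjoint-Prop}(iii) to $Z\vee_{\mathcal{L}(M)}X$ and using the absorption law in $\mathcal{L}(adM)$ yields $\phi(Z\vee_{\mathcal{L}(M)}X)=\phi(Z)\wedge\phi(X)=\phi(X)$, whence $Z\le_{\mathcal{L}(M)}X$ by injectivity, and symmetrically $Z\le_{\mathcal{L}(M)}Y$; thus $Z\le_{\mathcal{L}(M)}X\wedge_{\mathcal{L}(M)}Y$, so $\phi(X\wedge_{\mathcal{L}(M)}Y)\le\phi(Z)=\phi(X)\vee_{\mathcal{L}(adM)}\phi(Y)$, while the reverse inequality is immediate from order-reversal. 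Hence $\phi$ is a bijective lattice anti-homomorphism, so a lattice anti-isomorphism, giving $\mathcal{L}(adM)\cong\mathcal{L}(M)^{op}$. Since $M$ is simple, \autoref{Modular-Unique} now forces $M$ to be modular. (This half uses neither the rank-$3$ hypothesis nor anything beyond the standing assumptions.)

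\emph{Necessity.} Suppose $M$ is modular and write $M=M_1\oplus\cdots\oplus M_n$ as the direct sum of its connected components. By \autoref{Modular-1} each $M_i$ is either $U_{1,1}$ or a finite projective geometry, and by hypothesis $r(M_i)\ne 3$ for every $i$. I claim each flat lattice $\mathcal{L}(M_i)$ is self-dual, i.e.\ $\mathcal{L}(M_i)^{op}\cong\mathcal{L}(M_i)$. If $r(M_i)\le 2$ this is visible, since any geometric lattice of rank at most $2$ is isomorphic to its opposite. If $r(M_i)\ge 4$, then $M_i$ is a finite projective geometry of rank $\ge 4$, so \autoref{Veblen-Young} gives $M_i\cong PG(r(M_i)-1,q)$ for a prime power $q$, and $\mathcal{L}(PG(r(M_i)-1,q))$ is self-dual by the point--hyperplane duality of projective space (equivalently, by the proposition on dual geometries stated just before \autoref{Veblen-Young}, the dual geometry of such an $M_i$ is again a finite projective geometry, necessarily of the same rank and order, hence isomorphic to $M_i$). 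Because the flat lattice of a direct sum is the product of those of the summands, it follows that $\mathcal{L}(M)^{op}\cong\mathcal{L}(M)$. By \autoref{Modular-Unique-1}, $M$ has an adjoint $adM$ with $\mathcal{L}(adM)\cong\mathcal{L}(M)^{op}\cong\mathcal{L}(M)$; as $adM$ and $M$ are simple matroids with isomorphic lattices of flats, $adM\cong M$. (Alternatively, argue componentwise: $adU_{1,1}\cong U_{1,1}$, $adU_{2,k}\cong U_{2,k}$, and $adM_i\cong\sigma PG(r(M_i)-1,q)\cong PG(r(M_i)-1,q)\cong M_i$ for $r(M_i)\ge 4$ by \autoref{Adjoint-Remark}, then recombine with \autoref{Adjoint-Con2}.)

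The single real obstacle is the necessity direction, and it is exactly where the exclusion of rank-$3$ components is essential: a connected modular matroid of rank $3$ is a projective plane, for which no coordinatization theorem is available, and a non-Desarguesian plane need not be isomorphic to its dual plane, so $adM\cong M$ can genuinely fail. For ranks $1$, $2$ and $\ge 4$ the pertinent geometric lattices are self-dual — trivially in low rank, and via \autoref{Veblen-Young} in high rank — and feeding this self-duality into \autoref{Modular-Unique-1} is what drives the argument. The sufficiency direction, by comparison, is a cardinality count together with the lattice identity \autoref{Adjoint-Prop}(iii).
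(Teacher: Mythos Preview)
Your proof is correct and follows essentially the same route as the paper's. For the direction $adM\cong M\Rightarrow M$ modular, the paper simply asserts $\mathcal{L}(adM)\cong\mathcal{L}(M)^{op}$ from the cardinality match, whereas you supply the extra step (via \autoref{Adjoint-Prop}(iii) and absorption) that upgrades the bijective order-reversing map to a genuine lattice anti-isomorphism; this is a welcome bit of rigor. For the direction $M$ modular $\Rightarrow adM\cong M$, both you and the paper reduce to connected components, dispose of ranks $1$ and $2$ trivially, and for rank $\ge 4$ invoke \autoref{Veblen-Young}; the paper then finishes by explicitly exhibiting the normal-vector bijection $\Psi:\mathcal{H}(PG(r-1,q))\to E(PG(r-1,q))$ to get $\sigma PG(r-1,q)\cong PG(r-1,q)$, which is exactly your parenthetical ``alternative'' route, while your primary route packages the same content as lattice self-duality and then appeals to \autoref{Modular-Unique-1}. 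These are the same argument in two dialects.
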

\begin{proof}
For the necessity,  first noting from $adM\cong M$ that we have $\mathcal{L}(adM)\cong\mathcal{L}(M)^{op}$. This means that $\mathcal{L}(M)^{op}$ is a geometric lattice. So, $M$ is modular obviously.

For the sufficiency, from  \autoref{Adjoint-Con2} and \autoref{Adjoint-Minor}, we may assume that $M$ is connected with rank $r$. For $r=1$ and $2$, we have $M\cong U_{1,1}$ and $M\cong U_{2,m}$ for some positive integer $m\ge3$ from \autoref{Modular-1}. Then the both cases are trivial. For $r>3$, according to \autoref{Adjoint-Remark}, we need only consider this case that $M$ is the finite projective geometry $PG(r-1,q)$ for some finite field $GF(q)$ with $q$ elements, and $adM$ is identified with the type I adjoint $\sigma PG(r-1,q)$ of $PG(r-1,q)$. From the elementary linear algebra, there is a natural one-to-one correspondence between all $(r-1)$-dimensional subspaces and all $1$-dimensional subspaces of the vector space $GF(q)^r$ such that each $(r-1)$-dimensional subspace corresponds to its $1$-dimensional normal complement space in $GF(q)^r$. This automatically yields the next bijection
\begin{equation*}
\Psi: \mathcal{H}\big(PG(r-1,q)\big)\rightarrow E\big(PG(r-1,q)\big), \quad H\mapsto\Psi(H)=\bm h,
\end{equation*}
such that ${\rm span}(H)\oplus{\rm span}(\bm h)=GF(q)^r$ for $H\in\mathcal{H}\big(PG(r-1,q)\big)$. Combining with the definition of the type I adjoint in \eqref{Adjoint-I}, we obtain $PG(r-1,q)\cong\sigma PG(r-1,q)$  immediately.  We complete the proof.
\end{proof}

Let $M$ be a simple matroid. Suppose $adM$ is an adjoint of $M$. We conclude the preceding arguments by pointing out the following relations:
\[
adM\cong M\Leftrightarrow\mathcal{L}(M)\cong\mathcal{L}(M)^{op}\Rightarrow M\mbox{ is modular}\Leftrightarrow\mathcal{L}(adM)\cong\mathcal{L}(M)^{op}.
\]
When $M$ is modular, $\mathcal{L}(M)$ may be not isomorphic to $\mathcal{L}(M)^{op}$. For all we know, the main reason comes from the fact that some projective planes are not self-dual.

In end of this section, we further state that there is a close connection between the modularity of a matroid $M$ and the size of its an adjoint $adM$. Let $M$ be a simple matroid. Recall from \cite[Theorem 2]{Greene1970} that $M$ is modular if and only if $|\mathcal{H}(M)|=|E(M)|$.  Combining with \autoref{Modular-Unique}, we can obtain directly the following result.
\begin{proposition}\label{Modular-Card}
Let $M$ be a simple matroid.  Then $M$ is modular if and only if $M$ has  an adjoint $adM$ such that $|E(M)|=|E(adM)|$.
\end{proposition}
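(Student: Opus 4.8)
The plan is to reduce both directions to two facts already available: the equivalent characterization of adjoints in \autoref{Adjoint-Cha}, and Greene's cardinality criterion \cite[Theorem 2]{Greene1970}, which says that a simple matroid $M$ is modular if and only if $|\mathcal{H}(M)|=|E(M)|$. The whole argument hinges on a single preliminary observation: for a simple matroid $M$, every adjoint $adM$ satisfies $|E(adM)|=|\mathcal{H}(M)|$.

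First I would establish that observation. Since $M$ is simple it is in particular loopless, so \autoref{Adjoint-Cha} applies and we may regard $E(adM)=\mathcal{H}(M)$; in general, the defining injection $\phi$ of any adjoint sends the coatoms of $\mathcal{L}(M)$ bijectively onto the atoms of $\mathcal{L}(adM)$, and since $adM$ is simple its atoms are exactly its ground-set elements, so $|E(adM)|=|\mathcal{H}(M)|$ regardless of which adjoint is chosen. Consequently, whenever $M$ possesses an adjoint $adM$, the numerical condition $|E(M)|=|E(adM)|$ is equivalent to $|E(M)|=|\mathcal{H}(M)|$.

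With this in hand the two implications are immediate. For necessity, if $M$ is modular then \autoref{Modular-Unique} guarantees that an adjoint $adM$ exists (namely the one coming from $\mathcal{L}(M)^{op}$); the observation gives $|E(adM)|=|\mathcal{H}(M)|$, and Greene's criterion gives $|\mathcal{H}(M)|=|E(M)|$, so $|E(M)|=|E(adM)|$. For sufficiency, if $M$ admits an adjoint $adM$ with $|E(M)|=|E(adM)|$, the observation rewrites this as $|E(M)|=|\mathcal{H}(M)|$, and Greene's criterion then forces $M$ to be modular. There is no genuine obstacle in this proof; the only step meriting care is checking that $|E(adM)|=|\mathcal{H}(M)|$ holds for an \emph{arbitrary} adjoint and not merely for the distinguished one attached to $\mathcal{L}(M)^{op}$, which is precisely what \autoref{Adjoint-Cha} supplies.
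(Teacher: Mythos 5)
Your proof is correct and follows essentially the same route as the paper, which states the result as a direct consequence of Greene's criterion ($M$ modular iff $|\mathcal{H}(M)|=|E(M)|$) together with \autoref{Modular-Unique}. The bridging observation you make explicit --- that any adjoint satisfies $|E(adM)|=|\mathcal{H}(M)|$ because the adjoint map carries coatoms of $\mathcal{L}(M)$ bijectively onto atoms of the simple matroid $adM$ --- is exactly the step the paper leaves implicit.
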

\section{Adjoint sequences}\label{Sec-5}

Inspired by Oxley and Wang's work in  \cite{Oxley-Wang2019}, we will study the classifications of adjoint sequences for arbitrary matroids.  Modular matroids will become the key ingredient to characterize adjoint sequences. Recall the arguments at the beginning of \autoref{Sec-4}, intuitively, an adjoint of a matroid $M$ is more close to modular matroid than $M$ itself. Thus a natural question arises: suppose a connected matroid $M$ has an adjoint sequence $ad^0M=M, adM,ad^2M,\ldots$, whether such adjoint sequence is eventually convergent to a projective geometry.  It is worth noting that \cite[Exercise 7.17]{Bjorner1999} seems to foreshadow this phenomenon.

Let $M$ be a matroid and $ad^0 M=M$. Inductively, for any positive integer $k$, the {\em $k$th adjoint} $ad^kM$ of $ M$ is an adjoint of $ad^{k-1}M$ if $ad^{k-2}M$ has an adjoint $ad^{k-1}M$. {\em An adjoint sequence} of $M$ is the sequence $adM^0,adM,ad^2M,\ldots$. Such adjoint sequence may have to stop after finitely many steps with a matroid that fails to admit an adjoint. Based on the fact that a matroid with rank smaller than three always admits an adjoint, we can easily obtain the following result and omit this proof.
\begin{theorem}\label{Finite-Type}
Let $M$ be a simple connected matroid of rank $r\le2$ and size $m$. Then for all integers $k\ge 0$, we have
that $adM^k\cong U_{1,1}$ for $r=1$, and $ad^kM\cong U_{2,m}$ for $r=2$.
\end{theorem}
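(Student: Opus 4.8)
The plan is to argue by induction on $k$, taking the base case $k=0$ to be the hypothesis on $M$ itself. Two facts will do all the work. First, every matroid of rank at most three admits an adjoint, so for $M$ of rank $r\le 2$ the adjoint sequence $ad^0M,adM,ad^2M,\ldots$ never terminates and each term is well defined. Second, by \autoref{Adjoint-Cha}, an adjoint $adN$ of a matroid $N$ may be taken to have ground set $E(adN)=\mathcal{H}(N)$, so that $|E(adN)|=|\mathcal{H}(N)|$, while $adN$ is always simple and has rank $r(N)$. Since a simple matroid of rank at most two is determined up to isomorphism by its rank and its number of elements — the unique simple matroid of rank $1$ is $U_{1,1}$, and the unique simple matroid of rank $2$ on $n$ elements is $U_{2,n}$ — it will suffice to track the cardinality of the ground set along the sequence.

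For the base step I would record the two counts: $\mathcal{H}(U_{1,1})$ consists of the single rank-$0$ flat $\emptyset$, so $|\mathcal{H}(U_{1,1})|=1$; and $\mathcal{H}(U_{2,m})$ consists of the $m$ rank-$1$ flats (the singletons), so $|\mathcal{H}(U_{2,m})|=m$. Then the inductive step is immediate: if $ad^kM\cong U_{1,1}$, then since $ad^kM$ has rank $1$ it has an adjoint $ad^{k+1}M$, which is simple of rank $1$ with $|E(ad^{k+1}M)|=|\mathcal{H}(ad^kM)|=1$, hence $ad^{k+1}M\cong U_{1,1}$; and if $ad^kM\cong U_{2,m}$, then $ad^{k+1}M$ is simple of rank $2$ on $m$ elements, hence $ad^{k+1}M\cong U_{2,m}$. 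This closes the induction for both $r=1$ and $r=2$.

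I do not expect any real obstacle; the argument is pure bookkeeping. The only points I would be careful not to skip are that an adjoint genuinely exists at every stage (which is exactly where $r\le 2$ is used) and that one is entitled to identify a simple matroid of rank $\le 2$ from its rank and cardinality alone. As a remark, \autoref{Adjoint-Con3} shows connectedness is preserved throughout the sequence, consistent with $U_{1,1}$ and $U_{2,m}$ (for $m\ge 3$) being connected, although this is not needed for the statement as phrased.
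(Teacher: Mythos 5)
Your proof is correct and is exactly the bookkeeping argument the paper has in mind (the paper omits the proof, citing only the existence of adjoints in rank $\le 3$): existence at every stage, $|E(adN)|=|\mathcal{H}(N)|$ with $adN$ simple of the same rank, and the fact that a simple matroid of rank $\le 2$ is determined by its rank and size. The hyperplane counts $|\mathcal{H}(U_{1,1})|=1$ and $|\mathcal{H}(U_{2,m})|=m$ are right, so the induction closes.
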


In order to investigate adjoint sequences of matroids with rank greater than two, we need the next key result.
\begin{lemma}\label{Submatroid}
Let $M$ be a simple matroid. If $M$ has a $2$th adjoint $ad^2M$, then $M$ is a submatroid of $ad^2M$ up to isomorphism.
\end{lemma}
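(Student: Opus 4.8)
The plan is to exhibit an explicit injective map $\theta\colon E(M)\to E(ad^2M)$ and check that it is rank-preserving, so that $M$ is isomorphic to the restriction of $ad^2M$ to $\theta(E(M))$. Since $M$ is simple, hence loopless, \autoref{Adjoint-Cha} lets us realize $adM$ with ground set $E(adM)=\mathcal{H}(M)$ so that $H[e]:=\{H\in\mathcal{H}(M)\mid e\in H\}$ is a hyperplane of $adM$ for every $e\in E(M)$; applying \autoref{Adjoint-Cha} once more to the (simple, loopless) matroid $adM$ realizes $ad^2M$ with ground set $E(ad^2M)=\mathcal{H}(adM)$. With these identifications each $H[e]$ is a hyperplane of $adM$, hence an element of $ad^2M$, so $\theta(e):=H[e]$ makes sense, and by the construction in the proof of \autoref{Adjoint-Cha} together with \eqref{Adjoint-Map} we may moreover take the adjoint maps $\phi\colon\mathcal{L}(M)\to\mathcal{L}(adM)$ and $\psi\colon\mathcal{L}(adM)\to\mathcal{L}(ad^2M)$ compatibly with these identifications, so that $\phi(\{e\})=H[e]$ for $e\in E(M)$ and $\psi$ carries the coatom $H[e]$ of $adM$ to the atom $H[e]$ of $ad^2M$. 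The map $\theta$ is injective: $H[e]=H[f]$ means $e$ and $f$ lie in exactly the same hyperplanes of $M$, so $\cl_M(\{e\})=\cl_M(\{f\})$, and simplicity forces $e=f$.

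First I would prove the inequality $r_{ad^2M}(\theta(S))\le r_M(S)$ for every $S\subseteq E(M)$. Set $X=\cl_M(S)$. For each $e\in X$ the flats satisfy $\{e\}\le_{\mathcal{L}(M)}X$, so the order-reversing map $\phi$ gives $\phi(X)\le_{\mathcal{L}(adM)}\phi(\{e\})=H[e]$, and then the order-reversing map $\psi$ puts the atom $H[e]=\psi(H[e])$ below the flat $\psi(\phi(X))$ of $ad^2M$. Hence $\theta(S)\subseteq\theta(X)\subseteq\psi(\phi(X))$, and (writing $r=r(M)=r(adM)=r(ad^2M)$) \autoref{Adjoint-Prop}(ii), applied first to $ad^2M$ over $adM$ and then to $adM$ over $M$, gives $r_{ad^2M}(\psi(\phi(X)))=r(adM)-r_{adM}(\phi(X))=r-(r-r_M(X))=r_M(X)=r_M(S)$. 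Thus $\theta(S)$ lies in a flat of rank $r_M(S)$, and the bound follows.

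For the reverse inequality it suffices to show that $\theta$ carries each basis $B=\{e_1,\dots,e_r\}$ of $M$ to a basis of $ad^2M$; then, extending a maximal independent subset of any $S$ to a basis and using injectivity, $r_{ad^2M}(\theta(S))\ge r_M(S)$. By \autoref{Ajoint-basis}, $B':=\{H(e_i;B)\mid i\in[r]\}$ is a basis of $adM$. The crux is the identity, inside $adM$, that the fundamental hyperplane of the element $H(e_i;B)$ with respect to $B'$ is exactly $H[e_i]$: by \autoref{Adjoint-Cha}, $H[e_i]$ is a hyperplane of $adM$, and for $j\ne i$ we have $e_i\in B\setminus e_j\subseteq H(e_j;B)$, i.e.\ $H(e_j;B)\in H[e_i]$, so $H[e_i]$ contains the $(r-1)$-element independent set $B'\setminus\{H(e_i;B)\}$ and must therefore be its unique fundamental hyperplane. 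Applying \autoref{Ajoint-basis} now to $adM$ with basis $B'$ shows that the set of fundamental hyperplanes of the elements of $B'$ — which, by the identity just established, equals $\{H[e_i]\mid i\in[r]\}=\theta(B)$ — is a basis of $ad^2M$. Combining the two inequalities yields $r_{ad^2M}(\theta(S))=r_M(S)$ for all $S\subseteq E(M)$, so $\theta$ is an isomorphism of $M$ onto a submatroid of $ad^2M$.

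I expect the main difficulty to be not a single deep step but the careful bookkeeping across three layers of structure — elements of $ad^2M$ are hyperplanes of $adM$, which are themselves sets of hyperplanes of $M$ — and the verification that the ground-set identifications and the adjoint maps $\phi,\psi$ at the two levels can be chosen compatibly, so that $\phi(\{e\})$, the coatom $H[e]$ of $adM$, and the point $\theta(e)$ of $ad^2M$ are all literally named $H[e]$. Once that is pinned down, the upper bound is a one-line rank computation via \autoref{Adjoint-Prop}(ii) and the lower bound is a direct double application of \autoref{Ajoint-basis}.
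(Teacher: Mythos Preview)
Your proof is correct and follows the same essential idea as the paper's: compose the two adjoint maps to obtain an injection $E(M)\to E(ad^2M)$ and verify that it preserves the matroid structure. The differences are in execution rather than strategy. For the direction ``independents map to independents'' the paper uses \autoref{Adjoint-Prop}(ii),(iii) to see that the intersections $\bigcap_{j\le i}\phi_1(e_j)$ form a strictly decreasing chain and then invokes \autoref{Adjoint-Prop}(v) directly; you instead give a pleasant double application of \autoref{Ajoint-basis}, identifying $H[e_i]$ with the fundamental hyperplane of $H(e_i;B)$ with respect to $B'$. More importantly, you explicitly prove the upper bound $r_{ad^2M}(\theta(S))\le r_M(S)$ via the rank computation $r_{ad^2M}(\psi(\phi(X)))=r_M(X)$, whereas the paper only writes out the independence-preserving direction and lets ``$M$ is isomorphic to a submatroid'' stand on that alone; your version is the more careful of the two, since showing merely that independents go to independents does not by itself guarantee that the image is a restriction isomorphic to $M$.
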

\begin{proof}
Let $\phi_1$ and $\phi_2$ be the adjoint maps of $M$ and $adM$, respectively. According to the definition of adjoint map in \autoref{Sec-2},  the maps $\phi_1$ and $\phi_2$ induce an order-preserving injective map $\phi_2\circ\phi_1$ from $\mathcal{L}(M)$ to $\mathcal{L}(ad^2M)$ sending each element $e\in E(M)$  to $\phi_2\circ\phi_1(e)\in E(ad^2M)$. Taking an independent set $\{e_1,e_2,\ldots,e_k\}$ of $M$,  we know that every $\phi_1(e_i)$ is a hyperplane of $adM$ and $r_{adM}(\bigcap_{j=1}^i\phi_1(e_j)\big)=r(M)-i$ for all $1\le i\le k$ from the properties ${\rm(ii)}$ and ${\rm(iii)}$ in \autoref{Adjoint-Prop}.  Immediately,  we have $\bigcap_{j=1}^{i+1}\phi_1(e_j)\subsetneqq \bigcap_{j=1}^{i}\phi_1(e_j)$ for each $1\le i\le k-1$. From the property ${\rm(v)}$ in \autoref{Adjoint-Prop}, we arrive at that $\{\phi_2\circ\phi_1(e_1),\phi_2\circ\phi_1(e_2),\ldots,\phi_2\circ\phi_1(e_k)\}$ is an independent set of $ad^2M$. So $M$ is isomorphic to a submatroid of $ad^2M$ indeed.
\end{proof}

Below further describes that if a $k$th adjoint $ad^kM$ is isomorphic to the original matroid $M$, then $M$ is isomorphic to its an adjoint $adM$ or $2$th adjoint $ad^2M$.

\begin{lemma}\label{Adjoint-Two}
Let $M$ be a simple matroid. If $M$ has a $k$th adjoint $ad^kM$ for some $k\ge 3$ such that $M\cong ad^kM$, then $M$ is isomorphic to $adM$ or $adM^2$.
\end{lemma}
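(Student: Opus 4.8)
The plan is a short counting argument, and I do not expect a serious obstacle: the crux is to observe that the ground-set sizes are monotone along the adjoint sequence, while the only point that needs a word is a trivial final upgrade from a cardinality equality to an isomorphism via \autoref{Submatroid}.

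Concretely, I would first record that for a simple matroid $N$ with an adjoint, the adjoint map sends the coatoms of $\mathcal{L}(N)$, that is the hyperplanes of $N$, bijectively onto the atoms of $\mathcal{L}(adN)$, that is the elements of the simple matroid $adN$; hence $|E(adN)|=|\mathcal{H}(N)|$, which is exactly the identification used in \autoref{Adjoint-Cha}. Now recall that $|\mathcal{H}(N)|\ge|E(N)|$ holds for every simple matroid $N$ \cite{Greene1970}, with equality precisely when $N$ is modular (this is the result underlying \autoref{Modular-Card}). Applying this to $N=ad^{j}M$ for $0\le j\le k-1$ — each of which is simple, since adjoints are simple and $ad^{0}M=M$ is simple by hypothesis — yields
\[
|E(M)|=|E(ad^{0}M)|\le|E(adM)|\le|E(ad^{2}M)|\le\cdots\le|E(ad^{k}M)|.
\]
Because $ad^{k}M\cong M$, the right-hand end equals $|E(M)|$, so every inequality collapses to an equality; in particular $|E(M)|=|E(ad^{2}M)|$ and $|E(M)|=|E(adM)|$.

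It then remains to turn $|E(M)|=|E(ad^{2}M)|$ into $M\cong ad^{2}M$. By \autoref{Submatroid}, $M$ is isomorphic to a restriction $(ad^{2}M)|_{S}$ of $ad^{2}M$ for some $S\subseteq E(ad^{2}M)$; since $|S|=|E(M)|=|E(ad^{2}M)|$ we get $S=E(ad^{2}M)$, whence $M\cong ad^{2}M$, which is one of the two matroids allowed in the conclusion. Should one wish to avoid \autoref{Submatroid} at this last step, one can instead argue: $|E(M)|=|E(adM)|$ makes $M$ modular by \autoref{Modular-Card}, likewise $|E(adM)|=|E(ad^{2}M)|$ makes $adM$ modular, and then \autoref{Modular-Unique-1} gives $\mathcal{L}(ad^{2}M)\cong\mathcal{L}(adM)^{op}\cong(\mathcal{L}(M)^{op})^{op}=\mathcal{L}(M)$, so $M\cong ad^{2}M$ because a simple matroid is determined by its lattice of flats. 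And to sidestep the use of the rank inequality altogether, one can split on the parity of $k$: \autoref{Submatroid} by itself gives $|E(M)|\le|E(ad^{2}M)|\le\cdots$ along the even indices, this chain closes directly at $ad^{k}M\cong M$ when $k$ is even, and when $k$ is odd one needs only the single further inequality $|E(ad^{k-1}M)|\le|\mathcal{H}(ad^{k-1}M)|=|E(ad^{k}M)|$.
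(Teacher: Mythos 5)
Your proof is correct, but its main line is genuinely different from the paper's. The paper splits on the parity of $k$ and squeezes ground-set sizes using only the iterated embeddings $ad^iM\hookrightarrow ad^{i+2}M$ of \autoref{Submatroid} along the even-indexed (resp.\ odd-indexed) subsequence; it never invokes the point--copoint inequality. You instead obtain monotonicity at \emph{every} consecutive step from $|E(ad^{j+1}M)|=|\mathcal{H}(ad^{j}M)|\ge|E(ad^{j}M)|$ \cite{Greene1970}, collapse the whole chain at once, and call on \autoref{Submatroid} only at the very end to upgrade the cardinality equality to an isomorphism. Your route buys a parity-free argument and in fact a stronger conclusion: you always get $M\cong ad^{2}M$ (not merely the stated disjunction), and the byproduct $|E(M)|=|E(adM)|$ already forces $M$ to be modular via \autoref{Modular-Card}, which is information \autoref{Cyclic-Type} otherwise has to rederive; the one thing you do not record, and which the paper's version supplies and later uses, is the refinement that $M\cong adM$ when $k$ is odd (though it follows in one more line from your collapsed chain, since $adM$ embeds into $ad^{k}M\cong M$ with equal cardinality). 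The paper's route buys self-containedness for even $k$ (nothing beyond \autoref{Submatroid} is needed there), while its odd case, dismissed as ``the same argument,'' in truth requires exactly the single extra inequality $|E(ad^{k-1}M)|\le|\mathcal{H}(ad^{k-1}M)|=|E(ad^{k}M)|$ that your final variant makes explicit; that third variant of yours is essentially the paper's proof, properly completed.
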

\begin{proof}
Suppose $k$ is an even number.  According to \autoref{Submatroid}, we know that  $ad^2M$ is a submatroid of $ad^kM$. Noting that $|E(M)|\le|E(ad^2M)|\le|E(ad^kM)|$. It follows from $M\cong ad^kM$ that $M\cong ad^2M$. When $k$ is a odd number, we can prove $M\cong adM$ by the same argument as the above case.
\end{proof}

\autoref{Submatroid} states that $ad^{i}M$ can be embedded naturally into $ad^{i+2}M$ as a submatroid. This leads to an interesting phenomenon that there are two non-decreasing adjoint sequences of matroids, all having the same rank as the original matroid, one beginning with  the original matroid and the other starting with the adjoint matroid. Moreover, \autoref{Adjoint-Two} further implies that if existing distinct non-negative integers $i,j$ satisfies $ad^iM\cong ad^jM$, then the adjoint sequence is cyclic starting with $ad^iM$,  that is, $ad^{i+2k}M\cong ad^iM$ and $ad^{i+2k+1}M\cong ad^{i+1}M$ for all possible integers $k\ge0$. The following result indicates that the cyclic adjoint sequences eventually stabilize at the finite projective geometries.

\begin{theorem}\label{Cyclic-Type}
Let $M$ be a simple connected matroid of rank $r\ge3$. If $M$ has an adjoint sequence $ad^0M,adM,ad^2M,\ldots$ such that  $ad^iM\cong ad^jM$ for some non-negative integers $i<j$, then
\begin{itemize}
\item[{\rm(i)}]  if $r=3$,  the $k$th adjoint $ad^kM$ always exists for all $k\ge i$,  and we have that
\begin{itemize}
\item[{\rm(a)}]  if $j-i$ is odd, then $ad^kM$ is isomorphic to the same finite projective plane for all $k\ge i$;
\item[{\rm(b)}]  if $j-i$ is even,  then $ad^{i+2k}M$ ($ad^{i+2k+1}M$) is isomorphic to the same finite projective plane for all $k\ge 0$ (resp.);
\end{itemize}
\item[{\rm(ii)}]  if $r\ge4$, the $k$th adjoint $ad^kM$ always exists and is isomorphic to the same finite projective geometry $PG(r-1,q)$ for all $k\ge i$.
\end{itemize}
\end{theorem}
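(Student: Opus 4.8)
The plan is to reduce the entire statement to one fact: both $ad^iM$ and $ad^{i+1}M$ are \emph{modular}, after which \autoref{Modular-Unique-1} forces the sequence to stabilise exactly as described. Write $N:=ad^iM$ and $\ell:=j-i\ge 1$. Since $M$ is simple, connected and loopless of rank $r\ge 3$, iterating \autoref{Adjoint-Con3} shows every $ad^kM$ is connected, while every $ad^kM$ is simple (each adjoint is, and $ad^0M=M$ is by hypothesis) and has rank $r$. The hypothesis says $N\cong ad^{\ell}N$. If $\ell\ge 3$, then \autoref{Adjoint-Two} gives $N\cong adN$ or $N\cong ad^2N$; the cases $\ell=1,2$ give one of these directly. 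So in all cases $N\cong adN$ or $N\cong ad^2N$.

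First I would prove that $N$ and $adN=ad^{i+1}M$ are modular. By \autoref{Adjoint-Cha} the ground set of any adjoint of a simple matroid $X$ is $\mathcal H(X)$, so $|E(adX)|=|\mathcal H(X)|$; and by Greene's rank inequality (\cite[Theorem~2]{Greene1970}) one has $|E(X)|\le |\mathcal H(X)|$ for every simple matroid $X$, with equality if and only if $X$ is modular. If $N\cong adN$, then $|E(N)|=|E(adN)|=|\mathcal H(N)|$, so $N$, hence $adN\cong N$, is modular. If instead $N\cong ad^2N$, then $|E(N)|\le |\mathcal H(N)|=|E(adN)|\le |\mathcal H(adN)|=|E(ad^2N)|=|E(N)|$, so all inequalities are equalities and again both $N$ and $adN$ are modular.

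Now $N$ and $adN$ are connected modular matroids of rank $r\ge 3$, hence finite projective geometries by \autoref{Modular-1}; if $r\ge 4$ then \autoref{Veblen-Young}/\autoref{Adjoint-Remark} give $N\cong PG(r-1,q)$ for some prime power $q$, and since $adN$ is an adjoint of $N$, \autoref{Modular-Adjoint} together with uniqueness gives $adN\cong PG(r-1,q)$ as well. Because a modular matroid has a unique adjoint up to isomorphism (\autoref{Modular-Unique-1}) and $ad^2N\cong N$ (immediate if $N\cong ad^2N$; if $N\cong adN$, the unique adjoint of $N$ is $\cong adN\cong N$, and $ad^2N$ is an adjoint of $adN\cong N$), every term $ad^kM$ with $k\ge i$ is modular, hence admits an adjoint, so $ad^kM$ exists for all $k\ge i$; and since isomorphic modular matroids have isomorphic adjoints, an induction yields $ad^{i+2k}M\cong N$ and $ad^{i+2k+1}M\cong adN$ for all $k\ge 0$. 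For $r\ge 4$ this is exactly (ii), as $N\cong adN\cong PG(r-1,q)$. For $r=3$, $N$ and $adN$ are finite projective planes: if $j-i$ is even the two isomorphism classes give statement (b); if $j-i$ is odd then $ad^jM\cong N$ and, by the parity of $\ell$, also $ad^jM\cong adN$, forcing $N\cong adN$, so every $ad^kM$ with $k\ge i$ is isomorphic to the single projective plane $N$, which is (a).

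The main obstacle is that adjoints are not unique in general, so a priori the cyclic hypothesis need not propagate to a clean, eventually $2$-periodic pattern of well-defined isomorphism types — a rank-$3$ matroid can have many non-isomorphic adjoints. The device that overcomes this is the modularity step: once $ad^iM$ and $ad^{i+1}M$ are shown modular they are finite projective geometries with unique adjoints, and the cycle becomes genuinely periodic. Thus the real work is establishing that modularity, whose essential inputs are \autoref{Adjoint-Two} (to collapse the cycle to length at most $2$) and Greene's rank inequality (to turn the resulting size equalities into modularity); the parity bookkeeping separating (a) from (b) when $r=3$ is then routine.
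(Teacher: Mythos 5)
Your proof is correct and follows essentially the same route as the paper: collapse the cycle to period at most two via \autoref{Adjoint-Two}, deduce that $ad^iM$ and $ad^{i+1}M$ are modular, and then propagate with the uniqueness of adjoints of modular matroids (\autoref{Modular-Unique-1}) together with \autoref{Modular-1} and \autoref{Veblen-Young}. Your justification of the modularity step---chaining Greene's inequality $|E(X)|\le|\mathcal{H}(X)|$ around the cycle and forcing equality, i.e.\ invoking \autoref{Modular-Card}---is in fact more explicit than the paper's, which passes directly from $ad^iM\cong ad^{i+1}M$ (or $ad^iM\cong ad^{i+2}M$) to $\mathcal{L}(ad^{i+1}M)\cong\mathcal{L}(ad^iM)^{op}$ without spelling out the cardinality argument.
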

\begin{proof}
For $r=3$,  if $j-i$ is odd,  we get $ad^iM\cong ad^{i+1}M$ by \autoref{Adjoint-Two}. This means $\mathcal{L}(ad^{i+1}M)\cong\mathcal{L}(ad^iM)^{op}$ and $\mathcal{L}(ad^iM)^{op}$ is a geometric lattice. Immediately, we have that $ad^iM$ is modular. On the other hand, the uniqueness of an adjoint of a modular matroid in \autoref{Modular-Unique-1} further implies that $ad^{i+1}M$ is the unique adjoint of $ad^iM$ up to isomorphism. It follows from $ad^iM\cong ad^{i+1}M$ that $ad^kM$ always holds and $ad^kM\cong ad^iM$ for all $k>i$. Then $ad^kM$ is isomorphic to a finite projective plane for all $k\ge i$ via \autoref{Modular-1}. If $j-i$ is even,  we acquire $ad^iM\cong ad^{i+2}M$ from \autoref{Adjoint-Two}, which indicates $\mathcal{L}(ad^{i+1}M)\cong\mathcal{L}(ad^iM)^{op}$. Using the same arguments as the above case, we can also arrive at  $ad^{i+2k}M\cong ad^iM$ and $ad^{i+2k+1}M\cong ad^{i+1}M$ for all $k\ge0$. Then we can verify that $ad^{i+2k}M$ ($ad^{i+2k+1}M$)  is isomorphic to a finite projective plane by \autoref{Modular-1} (resp.).

For $r\ge4$,  as an application of  \autoref{Adjoint-Two}, we have that $ad^iM$ is isomorphic to $ad^{i+1}M$ or $ad^{i+2}M$. It implies $\mathcal{L}(ad^{i+1}M)\cong\mathcal{L}(ad^iM)^{op}$ in the both potential cases. Likewise, $ad^iM$ is modular.  Combing with \autoref{Modular-Adjoint}, we arrive at that $ad^kM$ always exists and $ad^kM\cong ad^iM$  for all $k>i$ via the uniqueness of the adjoint of a modular matroid. So for all $k\ge i$, $ad^kM$ is isomorphic to a finite projective geometry $PG(r-1,q)$ from \autoref{Adjoint-Remark}. The proof is completed.
\end{proof}

To classify the infinite non-repeating adjoint sequences, we first introduce the direct limit of a directed system associated to matroids.  Let $ M=(E,\mathcal{I})$ and $ M'=(E',\mathcal{I}')$ be two matroids. An injection $\iota:  M\hookrightarrow M'$ is called an {\em embedding} if the image $\iota(M):=\b(\iota(E),\iota(\mathcal{I})\b)$ of $\iota$ is a submatroid of  $M'$. Let $\mathcal{M}$ be a category of all matroids including finite and infinite matroids. More information for the infinite matroids may refer to \cite{Oxley1978,Oxley}. Let $\{M_i\mid i\in\mathbb{N}\}$ be a family of matroids in $\mathcal{M}$. For each pair $i,j\in \mathbb{N}$ with $i\le j$,  assume given an embedding map $f_{ij}: M_i\hookrightarrow M_j$ such that, whenever $i\le j\le k$ in $\mathbb{N}$, we have
\[
f_{\sst jk}\circ f_{\sst ij}=f_{ik}\quad \And \quad f_{ii}={\rm id},
\]
where id denotes the identity mapping. Such triple $\big(\mathbb{N},\{ M_i\},\{f_{ij}\}\big)$ is called a {\em directed system} in $\mathcal{M}$.
\begin{definition}\label{Direct-Limit}
{\em
Let $\mathcal{M}$ be a category of all matroids and $\big(\mathbb{N},\{M_i\},\{f_{ij}\}\big)$ a directed system in $\mathcal{M}$.
An element $M\in \mathcal{M}$ is called a {\em direct limit} of this system if there exists
an embedding map $f_i:M_i\hookrightarrow M$ for each $i\in\mathbb{N}$ with the following properties:
\begin{itemize}
  \item [\rm{(i)}] $f_i=f_j\circ f_{ij}$ for any integer $i\le j$ in $\mathbb{N}$.
  \item [\rm{(ii)}] Given an element $N\in\mathcal{M}$ and an embedding $g_i: M_i\hookrightarrow N$ such that $g_i=g_j\circ f_{ij}$ for any integer $i\le j$ in $\mathbb{N}$, there exists unique embedding $g: M\hookrightarrow N$ such that $g_i=g\circ f_i$.
\end{itemize}
Such direct limit, write $M=\lim\limits_{\begin{subarray}{c}\rightarrow\\i\end{subarray}} M_i$.
}
\end{definition}

Let $M$ be a simple connected matroid. Suppose $M$ has an infinite adjoint sequence $ad^0M,adM,ad^2M,\ldots$.  Let $\phi_k^2$ be an adjoint map from $ad^{2k}M$ to $ad^{2k+1}M$, and $\phi_k^1$ be an adjoint map from $ad^{2k+1}M$ to $ad^{2(k+1)}M$ for all integers $k\in\mathbb{N}$. Recall from \autoref{Submatroid} that for any integers $i\le j$ in $\mathbb{N}$, $ad^{2i}M$ is a submatroid of $ad^{2j}M$, which yields a natural  embedding map
\[
\phi_{ij}:ad^{2i}M \hookrightarrow ad^{2j}M\quad \mbox{ such that } \quad\phi_{ij}=\phi_{j-1}^1\circ\phi_{j-1}^2\cdots\phi_i^1\circ\phi_i^2.
\]
It is clear that for any integers $i\le j\le k$ in $\mathbb{N}$, we have $\phi_{ik}=\phi_{jk}\circ\phi_{ij}$ and $\phi_{ii}={\rm id}$. Therefore, the triple $\big(\mathbb{N},\{ad^{2i}M\},\{\phi_{ij}\}\big)$ is a directed system in $\mathcal{M}$.

Next we shall construct a matroid  $\bar{M}$ (may be infinite) associated to the even adjoint sequence $ad^0M,ad^2M,\ldots$,  which is turned out to be a direct limit of  the directed system $\big(\mathbb{N},\{ad^{2i}M\},\{\phi_{ij}\}\big)$ later. Let $E_{\infty}=\bigcup_{i=0}^\infty E(ad^{2i}M)$. Define an equivalence relation $\sim$ on $E_\infty$ such that for any members $e,f\in E_\infty$, $e\sim f$ if and only if existing integers $i\le j$ in $\mathbb{N}$ satisfy that $e\in E(ad^{2i}M)$, $f\in E(ad^{2j}M)$ and $\phi_{ij}(e)=f$. Given an element $e\in E_\infty$, let $\bar{e}$ be an equivalence class of elements of $E_\infty$ containing $e$, that is $\bar{e}:=\{f\mid f\sim e, f\in E_\infty\}$. Let $\bar{E}$ be the set of all equivalence classes of $E_\infty$, i.e., $\bar{E}:=E_\infty/\sim=\b\{\bar{e}\mid e\in E_\infty\b\}$. Let $\bar{\mathcal{I}}$ be a collection of subsets of $\bar{E}$ such that for any subset $I=\{\bar e_1,\bar e_2,\ldots,\bar e_j\}$ of $\bar E$,  assume $e_k\in ad^{2i_k}M$  for each $k=1,2,\ldots,j$, and $i_1\le i_2\le \cdots\le i_j$,  $I\in \bar{\mathcal{I}}$ if and only if $\big\{\phi_{i_1i_j}(e_1),\phi_{i_2i_j}(e_2),\ldots,\phi_{i_ji_j}(e_j)\big\}$ is an independent set of $ad^{2i_j}M$. In this case, obviously, the set $\big\{\phi_{i_1l}(e_1),\phi_{i_2l}(e_2),\ldots,\phi_{i_jl}(e_j)\big\}$ is independent in $ad^{2l}M$  whenever $l\ge i_j$.

\begin{lemma}\label{Matroid-Projective-1}
Let $M$ be a simple connected matroid of rank $r$. If $M$ has an infinite adjoint sequence $ad^0M,adM,ad^2M,\ldots$, then $\bar{M}=(\bar{E},\bar{\mathcal{I}})$ is a simple connected matroid of rank $r$.
\end{lemma}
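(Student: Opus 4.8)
The plan is to establish the four assertions in turn — that $\bar M$ is a matroid, that it has rank exactly $r$, that it is simple, and that it is connected — in every case by reducing a statement about finitely many elements of $\bar M$ to the corresponding statement in a single finite matroid $ad^{2l}M$. The reduction device is this: given finitely many classes $\bar e_1,\dots,\bar e_j\in\bar E$ with representatives $e_k\in E(ad^{2i_k}M)$, the cocycle relations $\phi_{ik}=\phi_{jk}\circ\phi_{ij}$ let us push all of them into a common $ad^{2l}M$ with $l=\max_k i_k$, via the $\phi_{i_kl}$; moreover the relations $\phi_{jl}\circ\phi_{ij}=\phi_{il}$ force $\sim$-equivalent representatives to become literally equal in $ad^{2l}M$, while injectivity of the $\phi$'s keeps distinct classes distinct there. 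Since each $\phi_{ij}\colon ad^{2i}M\hookrightarrow ad^{2j}M$ is an embedding, its image is a submatroid, so $\phi_{ij}$ both preserves and reflects independence — indeed circuits — of finite subsets; combined with the pushing-forward this transports every finitary question about $\bar M$ down to an $ad^{2l}M$, where the answer is already known.

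First I would verify that $\bar{\mathcal I}$ is well defined: replacing the representatives $e_k$ by $\sim$-equivalent ones, or permuting the indices $i_k$, and then pushing everything to a large enough $ad^{2l}M$, turns both versions of the defining condition into independence of one and the same set in $ad^{2l}M$. Axiom (I1) holds since $\emptyset$ is independent in every $ad^{2i}M$; (I2) holds because a subset of a set that is independent in $ad^{2l}M$ is again independent there; and for the augmentation axiom I would take $I,J\in\bar{\mathcal I}$ with $|I|<|J|$, push both into a common $ad^{2l}M$ to obtain independent sets $I_l,J_l$ with $|I_l|<|J_l|$, apply exchange inside $ad^{2l}M$ to find $x\in J_l\setminus I_l$ with $I_l\cup\{x\}$ independent, and pull back the class of $x$. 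Together with the evident bound $|I|\le r$ for every $I\in\bar{\mathcal I}$, these axioms make $\bar M$ a matroid of finite rank.

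For the rank, \autoref{Adjoint-Def} gives $r(ad^kM)=r$ for all $k$, so the image in $\bar M$ of a basis of $M=ad^0M$ is independent of size $r$, giving $r(\bar M)\ge r$, while any $I\in\bar{\mathcal I}$ pushes to an independent set of some $ad^{2l}M$ and so has size at most $r$; hence $r(\bar M)=r$. For simplicity, $M$ is simple by hypothesis and every adjoint of a matroid is simple, so each $ad^{2i}M$ is simple; a loop of $\bar M$ would be a class $\bar e$ with $\{\bar e\}\notin\bar{\mathcal I}$, i.e. $e$ a loop of some $ad^{2i}M$, and a parallel pair in $\bar M$ would push to a pair of distinct parallel elements of some $ad^{2l}M$ — both impossible.

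The delicate step is connectivity. Since $M$ is connected and loopless, \autoref{Adjoint-Con3} makes $adM$ connected; $adM$ is loopless (adjoints being simple), so \autoref{Adjoint-Con3} again makes $ad^2M$ connected, and by induction every $ad^{2i}M$ is connected. Now take distinct classes $\bar e,\bar f\in\bar E$, push to a common $ad^{2l}M$ to get distinct $e_l,f_l$, and pick a circuit $C$ of $ad^{2l}M$ through $e_l$ and $f_l$. The map $E(ad^{2l}M)\to\bar E$, $e\mapsto\bar e$, is injective and, by the $i_1=\dots=i_j=l$ instance of the definition of $\bar{\mathcal I}$, carries independent sets to independent sets and back, hence extends to an embedding of $ad^{2l}M$ onto a submatroid of $\bar M$; as circuits of a submatroid are exactly the circuits of the ambient matroid lying inside it, the image of $C$ is a circuit of $\bar M$ through $\bar e$ and $\bar f$. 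So every pair of elements of $\bar M$ lies in a common circuit, and, $\bar M$ having finite rank, this makes $\bar M$ connected. I expect the only genuine subtlety to be checking that these canonical maps $ad^{2l}M\to\bar M$ are matroid embeddings and that circuits descend along them; the rest is a routine reduction to the finite matroids $ad^{2l}M$.
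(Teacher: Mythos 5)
Your proposal is correct and follows essentially the same route as the paper: every finitary assertion about $\bar M$ (augmentation, rank, simplicity, existence of a circuit through two given elements) is pushed via the maps $\phi_{i_kl}$ into a single $ad^{2l}M$, verified there, and pulled back. Your additional care about well-definedness of $\bar{\mathcal I}$ under change of representatives and about why circuits of $ad^{2l}M$ remain circuits of $\bar M$ only makes explicit what the paper leaves implicit.
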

\begin{proof}
We first shall prove that $\bar{M}=(\bar{E},\bar{I})$  is a matroid. Obviously, $\emptyset\in\bar{\mathcal{I}}$. For any member $I$ of $\bar{\mathcal{I}}$, the construction of $\bar{\mathcal{I}}$ guarantees that all subsets of $I$ are also the members of $\bar{\mathcal{I}}$. Given two elements $I_1=\{\bar e_1,\bar e_2,\ldots,\bar e_k\}$ and $I_2=\{\bar f_1,\bar f_2,\ldots,\bar f_l\}$ with $k<l$. We may assume that  $e_m\in ad^{2i_m}M$ ($m=1,2,\ldots,k$) with $i_1\le i_2\le \cdots\le i_k$,  $f_n\in ad^{2j_n}M$ ($n=1,2,\ldots,l$) with $j_1\le j_2\le \cdots\le j_l$ and $i_k\le j_l$. According to the definition of $\bar{\mathcal{I}}$, we have that $I_1'=\big\{\phi_{i_1j_l}(e_1),\phi_{i_2j_l}(e_2),\ldots,\phi_{i_kj_l}(e_k)\big\}$ and $I_2'=\big\{\phi_{j_1j_l}(f_1),\phi_{j_2j_l}(f_2),\ldots,\phi_{j_lj_l}(f_l)\big\}$ are the independent sets of  $ad^{2j_l}M$.  Since $ad^{2j_l}M$ is a matroid, the both independent sets $I_1'$ and $I_2'$ satisfy  the independence augmentation property in $ad^{2j_l}M$. This implies that $I_1$ and $I_2$ meet the independence augmentation property in $\bar M$ as well.  Hence,  the ordered pair $(\bar{E},\bar{\mathcal{I}})$ satisfies the independence axiom of matroid. Namely, $\bar{M}$ is a matroid. Moreover, recall the definition of $\bar{\mathcal{I}}$ again, we can easily obtain that $|I|\le r$ for any member $I\in\bar{\mathcal{I}}$,  and $\{\bar e_1,\bar e_2,\ldots,\bar e_r\}$ is an independent set of $\bar M$ for a basis $\{e_1,e_2,\ldots,e_r\}$ of $M$. So, the rank of $\bar M$ equals $r$.  Obviously, $\bar{M}$ is simple since each matroid $ad^{2i}M$ is simple. In addition,  taking any members $\bar{e}$ and $\bar{f}$ of $\bar{E}$, we may assume that $e,f\in E(ad^{2j}M)$ for some non-negative integer $j$. Then we obtain from \autoref{Adjoint-Con2} that $ad^{2j}M$ is connected since $M$ is connected. It follows that $ad^{2j}M$ has a circuit $C$ containing $e,f$.  Let $C=\{e,f,g_1,\ldots,g_i\}$ and $\bar{C}=\{\bar{e},\bar{f},\bar{g}_1,\ldots,\bar{g}_i\}$.  Immediately, the construction of $\bar{M}$ implies that $\bar{C}$ is a circuit of $\bar{M}$. Thus $\bar{M}$ is connected. This proof is completed.
\end{proof}

\begin{proposition}\label{Matroid-Projective-2}
Let $M$ be a simple connected matroid. If $M$ has an infinite adjoint sequence $ad^0M,adM,ad^2M,\ldots$, then $\bar{M}$ is a direct limit of the directed system $\big(\mathbb{N},\{ad^{2i}M\},\{\phi_{ij}\}\big)$, namely, $\bar{M}=\lim\limits_{\begin{subarray}{c}\to\\i\end{subarray}}ad^{2i}M$.
\end{proposition}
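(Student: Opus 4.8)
The plan is to verify the two clauses of \autoref{Direct-Limit} for $\bar M$ directly, by the usual filtered-colimit argument transported to matroids. Throughout one uses two facts: that $\mathbb N$ is directed, so every finite family of classes in $\bar E$ has representatives living inside a single $ad^{2l}M$; and that, by the construction underlying \autoref{Matroid-Projective-1}, membership in $\bar{\mathcal I}$ does not depend on the choice of representatives and may therefore always be tested at such a common level $l$.

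First I would introduce the canonical maps $f_i\colon ad^{2i}M\to\bar M$, $f_i(e)=\bar e$ for $e\in E(ad^{2i}M)$. Since $\phi_{ii}=\mathrm{id}$ and each $\phi_{ij}$ is injective, $\bar e=\bar{e'}$ with $e,e'\in E(ad^{2i}M)$ forces $e=e'$, so $f_i$ is injective; and evaluating the definition of $\bar{\mathcal I}$ with all representatives chosen at level $i$ (using $\phi_{ii}=\mathrm{id}$) shows that, for $e_1,\dots,e_k\in E(ad^{2i}M)$, one has $\{\bar e_1,\dots,\bar e_k\}\in\bar{\mathcal I}$ precisely when $\{e_1,\dots,e_k\}\in\mathcal I(ad^{2i}M)$. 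Hence $f_i$ is an embedding in the sense of \autoref{Sec-5}. For $i\le j$ and $e\in E(ad^{2i}M)$ we have $e\sim\phi_{ij}(e)$, so $f_j\bigl(\phi_{ij}(e)\bigr)=\overline{\phi_{ij}(e)}=\bar e=f_i(e)$, that is $f_i=f_j\circ\phi_{ij}$, which is \autoref{Direct-Limit}(i).

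For the universal property \autoref{Direct-Limit}(ii), given $N\in\mathcal M$ and embeddings $g_i\colon ad^{2i}M\hookrightarrow N$ with $g_i=g_j\circ\phi_{ij}$ for $i\le j$, I would set $g(\bar e)=g_i(e)$ for any representative $e\in E(ad^{2i}M)$ of $\bar e$. This is well defined: if $e\in E(ad^{2i}M)$, $e'\in E(ad^{2j}M)$ and $\bar e=\bar{e'}$, then picking $l\ge i,j$ with $\phi_{il}(e)=\phi_{jl}(e')$ (available from the definition of $\sim$ together with directedness of $\mathbb N$) gives $g_i(e)=g_l\bigl(\phi_{il}(e)\bigr)=g_l\bigl(\phi_{jl}(e')\bigr)=g_j(e')$. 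The map $g$ is injective: if $g(\bar e)=g(\bar f)$ with $e\in E(ad^{2i}M)$, $f\in E(ad^{2j}M)$ and $i\le j$, then $g_j\bigl(\phi_{ij}(e)\bigr)=g_i(e)=g_j(f)$, so injectivity of $g_j$ yields $\phi_{ij}(e)=f$ and $\bar e=\bar f$. That $g$ is an embedding onto a submatroid of $N$ follows by choosing, for any $\bar e_1,\dots,\bar e_k\in\bar E$, a level $l$ at which all are represented, say $\bar e_m=\overline{\phi_{i_m l}(e_m)}$: by the definition of $\bar{\mathcal I}$ and the fact that $g_l$ is an embedding, $\{\bar e_1,\dots,\bar e_k\}\in\bar{\mathcal I}$ iff $\{\phi_{i_1l}(e_1),\dots,\phi_{i_kl}(e_k)\}\in\mathcal I(ad^{2l}M)$ iff $\{g(\bar e_1),\dots,g(\bar e_k)\}$ is independent in $N$. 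Finally $g\circ f_i=g_i$ is immediate, and if $g'$ also satisfies $g'\circ f_i=g_i$ for all $i$, then on any class $\bar e=f_i(e)$ we get $g'(\bar e)=g'\bigl(f_i(e)\bigr)=g_i(e)=g(\bar e)$, so $g'=g$; this is the uniqueness. Combined with \autoref{Matroid-Projective-1}, which places $\bar M$ in $\mathcal M$, the two clauses of \autoref{Direct-Limit} give that $\bar M$ is the asserted direct limit.

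I expect the only genuine point of care to be the bookkeeping around $\sim$: one must confirm that $\sim$ is really an equivalence relation and that finitely many of its classes admit representatives at one common level $l$, both of which follow from $\phi_{jk}\circ\phi_{ij}=\phi_{ik}$, $\phi_{ii}=\mathrm{id}$ and the directedness of $\mathbb N$ (with \autoref{Submatroid} being what makes the maps $\phi_{ij}$, hence the directed system, legitimate in the first place). Everything else is the routine transport of the filtered-colimit construction to the matroid setting.
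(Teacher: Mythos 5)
Your proposal is correct and follows essentially the same route as the paper's proof: define the canonical embeddings $f_i(e)=\bar e$, verify $f_i=f_j\circ\phi_{ij}$, and then check the universal property by defining $g(\bar e)=g_i(e)$ and verifying well-definedness, injectivity, the embedding property via a common level $l$, and uniqueness. The only addition is your explicit remark that $\sim$ is an equivalence relation and that finitely many classes admit representatives at one level, which the paper uses implicitly; this is harmless bookkeeping rather than a different argument.
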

\begin{proof}
\autoref{Matroid-Projective-1} states that $\bar{M}$ is a matroid. We define a map $\phi_i:ad^{2i}M\to\bar{M}$ sending $e\in E(ad^{2i}M)$ to its equivalence class $\bar{e}\in\bar{E}$. Obviously, $\phi_i$ is injective. Note from the construction of $\bar{\mathcal{I}}$ that for any subset $I=\{e_1,\ldots,e_k\}$ of $E(ad^{2i}M)$, if $I$ is an independent set of $ad^{2i}M$, then $\{\bar{e}_1,\ldots,\bar{e}_k\}$ is independent in $\bar{M}$. Namely, the image $\phi_i(ad^{2i}M)$ of $\phi_i$ is a submatroid of $\bar{M}$. So, $\phi_i$ is an embedding map.  Additionally, for any integers $i\le j$ in $\mathbb{N}$ and $e\in E(ad^{2i}M)$, we have $\phi_i(e)=\phi_j\circ\phi_{ij}(e)=\bar{e}$ since $\phi_{ij}(e)\sim e$. Thus, we has obtained $\phi_i=\phi_j\circ\phi_{ij}$ for any integers $i\le j$ in $\mathbb{N}$. Namely, $\phi_i$ meets the property ${\rm (i)}$ in \autoref{Direct-Limit}.

Next we will show that $\phi_i$ satisfies the property ${\rm (ii)}$ in \autoref{Direct-Limit}. Given a matroid $N\in\mathcal{M}$ and an embedding map $\psi_i:ad^{2i}M\to N$ such that $\psi_i=\psi_j\circ\phi_{ij}$ for any integers $i\le j$ in $\mathbb{N}$. We need to show that there is a unique embedding map $\psi:\bar{M}\to N$ for which $\psi_i=\psi\circ\phi_i$. Define a map
$\psi:\bar{M}\to N$ such that $\psi(\bar{e})=\psi_i(e)$ if $\bar{e}=\phi_i(e)$ for some $e\in E(ad^{2i}M)$. The relation $\psi_i=\psi_j\circ\phi_{ij}$ implies that $\psi$ is well defined. Firstly, we shall prove the injectivity of $\psi$. Given two distinct members $\bar{e}$ and $\bar{f}$ of $\bar{E}$,  we may assume that $\psi(\bar{e})=\psi_i(e)$ for $e\in E(ad^{2i}M)$ and $\psi(\bar{f})=\psi_j(f)$ for $f\in E(ad^{2j}M)$  with $i\le j$ in $\mathbb{N}$. Since $\bar{e}\ne \bar{f}$, $\phi_{ij}(e)\ne f$ in $ad^{2j}M$. Then the injectivity of $\psi_j$ means that $\psi_i(e)=\psi_j\circ\phi_{ij}(e)\ne \psi_j(f)$. Namely, $\psi$ is injective. Secondly, we will verify that the image $\psi(\bar{M})$ of $\psi$ is a submatroid of $N$. This problem can reduce to showing that for a fixed independent set $\bar{I}=\{\bar{e}_1,\bar{e}_2,\ldots,\bar{e}_j\}$ of $\bar{M}$,  the set $\psi(\bar{I}):=\{\psi(\bar{e}_1),\psi(\bar{e}_2),\ldots,\psi(\bar{e}_j)\}$ is independent in $N$. Suppose $\psi(\bar{e}_k)=\psi_{i_k}(e_k)$ for some $e_k\in E(ad^{2i_k}M)$ and  $i_1\le i_2\cdots\le i_j$. According to the construction of $\bar{M}$, we arrive at that $\{\phi_{i_1i_j}(e_1),\phi_{i_2i_j}(e_2),\ldots,\phi_{i_ji_j}(e_j)\}$ is an independent set of $ad^{2i_j}M$ since $\bar{I}$ is independent in  $\bar{M}$. Noticing that $\psi(\bar{e}_k)=\psi_{i_j}\circ\phi_{i_ki_j}(e_k)$ for $k=1,2,\ldots,j$. Immediately, we can obtain that  $\psi(\bar{I})$ is independent in $N$ since $\psi_{i_j}$ is an embedding map from $ad^{2i_j}M$ to $N$. Up to now, we have verified that $\psi$ is an embedding map and $\psi_i=\psi\circ\phi_i$.  Note that the two embedding maps $\psi_i$ and $\phi_i$ completely determine the map $\psi$, that is, $\psi$ is unique. Hence, $\bar{M}$ is a directed limit of $\big(\mathbb{N},\{ad^{2i}M\},\{\phi_{ij}\}\big)$. We complete the proof.
\end{proof}

\autoref{Matroid-Projective-2} states that $\lim\limits_{\begin{subarray}{c}\to\\i\end{subarray}}ad^{2i}M$ is a matroid. Below further shows that $\lim\limits_{\begin{subarray}{c}\to\\i\end{subarray}}ad^{2i}M$ is a projective geometry, which is implicitly contained in \cite[Hint of Exercise 7.17]{Bjorner1999}. We shall omit its straightforward proof.
\begin{proposition}\label{Convergent-1}
Let $M$ be a simple connected matroid of rank $r\ge3$. If $M$ has an infinite adjoint sequence $ad^0M,adM,ad^2M,\ldots$,  then  $\lim\limits_{\begin{subarray}{c}\to\\i\end{subarray}}ad^{2i}M$ is a projective geometry.
\end{proposition}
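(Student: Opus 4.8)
By \autoref{Matroid-Projective-1} and \autoref{Matroid-Projective-2}, $\bar M=\lim_i ad^{2i}M$ is a simple connected matroid of finite rank $r\ge 3$, carrying embeddings $\phi_i\colon ad^{2i}M\hookrightarrow\bar M$ with $\phi_i=\phi_{i+1}\circ\phi_{i,i+1}$. The plan is to show that $\bar M$ is modular; granting this, $\mathcal L(\bar M)$ is an irreducible modular geometric lattice of finite rank $r\ge 3$, so by the classical structure theorem for modular geometric lattices (see \cite{Birkhoff1967}; cf.\ \autoref{Modular-1} and \autoref{Veblen-Young}, whose proofs use only that the rank is finite, not that the ground set is) it is the subspace lattice of a projective geometry, that is, $\bar M$ is a projective geometry.

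The crux is the following claim: if $N$ is a simple matroid admitting a $2$th adjoint $ad^2N$, with adjoint maps $\varphi_1\colon\mathcal L(N)\to\mathcal L(adN)$ and $\varphi_2\colon\mathcal L(adN)\to\mathcal L(ad^2N)$, then for any flats $X,Y$ of $N$ the flats $X':=\varphi_2(\varphi_1(X))$ and $Y':=\varphi_2(\varphi_1(Y))$ form a modular pair in $ad^2N$. Since $\varphi_1,\varphi_2$ are order-reversing, the image of $X$ under the embedding $N\hookrightarrow ad^2N$ of \autoref{Submatroid} lies in the flat $X'$, which by part (ii) of \autoref{Adjoint-Prop} (applied twice) has rank $r_N(X)$; as that embedding preserves independence, $X'$ is the closure of the image of $X$, and likewise for $Y'$. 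Using parts (ii) and (iii) of \autoref{Adjoint-Prop} one then gets $r_{ad^2N}(X')=r_N(X)$, $r_{ad^2N}(Y')=r_N(Y)$, $r_{ad^2N}(X'\vee Y')=r_N(X\vee_{\mathcal L(N)}Y)$, and
\[
r_{ad^2N}\big(X'\wedge_{\mathcal L(ad^2N)}Y'\big)=r-r_{adN}\big(\varphi_1(X)\vee_{\mathcal L(adN)}\varphi_1(Y)\big).
\]
Submodularity in $adN$ together with the identity $\varphi_1(X)\wedge_{\mathcal L(adN)}\varphi_1(Y)=\varphi_1(X\vee_{\mathcal L(N)}Y)$ bounds the right-hand side so that $r_{ad^2N}(X'\wedge Y')\ge r_N(X)+r_N(Y)-r_N(X\vee_{\mathcal L(N)}Y)$, while submodularity in $ad^2N$ gives the reverse inequality; equality is exactly the assertion that $(X',Y')$ is a modular pair.

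To conclude, let $\bar X,\bar Y$ be arbitrary flats of $\bar M$. As $\bar M$ has finite rank, each is the closure of finitely many of its points, and all those points lie in $\phi_i(E(ad^{2i}M))$ for $i$ large; so there are flats $X,Y$ of $N:=ad^{2i}M$ with $\bar X=\mathrm{cl}_{\bar M}(\phi_i(X))$ and $\bar Y=\mathrm{cl}_{\bar M}(\phi_i(Y))$. Since the adjoint sequence is infinite, $N$ has a $2$th adjoint, and applying the claim with $adN=ad^{2i+1}M$, $ad^2N=ad^{2(i+1)}M$ makes $X':=\varphi_2(\varphi_1(X))$ and $Y'$ a modular pair in $ad^{2(i+1)}M$; moreover $\bar X=\mathrm{cl}_{\bar M}(\phi_{i+1}(X'))$ and $\bar Y=\mathrm{cl}_{\bar M}(\phi_{i+1}(Y'))$ because $\phi_i=\phi_{i+1}\circ\phi_{i,i+1}$ and $X'$ is the closure of $\phi_{i,i+1}(X)$. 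As $\phi_{i+1}$ is a rank-preserving embedding, $r_{\bar M}(\bar X)=r(X')$, $r_{\bar M}(\bar Y)=r(Y')$, $r_{\bar M}(\bar X\vee\bar Y)=r(X'\vee Y')$, and $r_{\bar M}(\bar X\wedge\bar Y)\ge r(X'\wedge Y')$ since $\mathrm{cl}_{\bar M}(\phi_{i+1}(X'\wedge Y'))\subseteq\bar X\wedge\bar Y$; combining these with the modular-pair identity for $(X',Y')$ and submodularity in $\bar M$ forces $r_{\bar M}(\bar X)+r_{\bar M}(\bar Y)=r_{\bar M}(\bar X\wedge\bar Y)+r_{\bar M}(\bar X\vee\bar Y)$, so $(\bar X,\bar Y)$ is a modular pair. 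Hence $\bar M$ is modular, and therefore a projective geometry.

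The step I expect to resist is the modular-pair claim: pinning down $r_{ad^2N}(X'\wedge Y')$ exactly needs parts (ii) and (iii) of \autoref{Adjoint-Prop}, the independence statement behind \autoref{Submatroid} (part (v) of \autoref{Adjoint-Prop}), and submodularity applied in \emph{both} $adN$ and $ad^2N$ — a single adjoint does not fix the value, which is precisely why one passes to the even adjoint sequence. The remaining work — tracking ranks of flats, joins and meets through the embeddings $\phi_i$, and quoting the structure theorem for modular geometric lattices in a form valid for finite rank with possibly infinite ground set — is straightforward bookkeeping.
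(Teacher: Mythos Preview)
Your argument is correct. The paper omits its own proof (calling it ``straightforward'' and citing the hint to Exercise~7.17 in \cite{Bjorner1999}), and your route---proving $\bar M$ is modular by lifting flats two adjoint steps up and then invoking the structure theorem for connected modular geometric lattices of finite rank---is precisely what the paper's setup, especially part~(iv) of \autoref{Adjoint-Prop}, points toward. One simplification worth noting: your ``crux'' modular-pair claim for $(X',Y')$ in $ad^2N$ is a one-line application of \autoref{Adjoint-Prop}(iv) to the adjoint map $\varphi_2$, taking the flats $\varphi_1(X),\varphi_1(Y)$ of $adN$; the two-sided submodularity bookkeeping you carry out is essentially a rederivation of (iv) in that instance.
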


Analogous to $\lim\limits_{\begin{subarray}{c}\to\\i\end{subarray}}ad^{2i}M$, we can define $\lim\limits_{\begin{subarray}{c}\to\\i\end{subarray}}ad^{2i+1}M$ by making a minor change. Additionally, using the same arguments as in \autoref{Matroid-Projective-1}, \autoref{Matroid-Projective-2} and \autoref{Convergent-1}, we can verify that if a connected matroid $M$ with rank greater than two has an infinite adjoint sequence $ad^0M,adM,ad^2M,\ldots$,  then  $\lim\limits_{\begin{subarray}{c}\to\\i\end{subarray}}ad^{2i+1}M$ is a projective geometry. We are now ready to characterize the infinite non-repeating adjoint sequences by infinite projective geometries. \begin{theorem}\label{Convergent-Type}
Let $M$ be a simple connected matroid of rank $r\ge3$. If $M$ has an infinite adjoint sequence $ad^0M,adM,ad^2M,\ldots$ such that $ad^iM\ncong ad^jM$ for any non-negative integers $i<j$, then
\begin{itemize}
\item[{\rm(i)}]  if $r=3$, $\lim\limits_{\begin{subarray}{c}\to\\i\end{subarray}}ad^{2i} M$ ($\lim\limits_{\begin{subarray}{c}\to\\i\end{subarray}}ad^{2i+1} M$) is  an infinite  projective plane.
\item[{\rm(ii)}]  if $r\ge4$,  $\lim\limits_{\begin{subarray}{c}\to\\i\end{subarray}}ad^{2i} M$ and $\lim\limits_{\begin{subarray}{c}\to\\i\end{subarray}}ad^{2i+1} M$ are isomorphic to the same infinite projective geometry $PG(r-1,\mathbb{F})$ for some infinite field $\mathbb{F}$.
\end{itemize}
\end{theorem}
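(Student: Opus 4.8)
The plan is to take as given, from \autoref{Convergent-1} and its odd-index analogue recorded just above, that $A:=\lim\limits_{\begin{subarray}{c}\to\\i\end{subarray}}ad^{2i}M$ (the matroid $\bar M$ of \autoref{Matroid-Projective-1}) and $B:=\lim\limits_{\begin{subarray}{c}\to\\i\end{subarray}}ad^{2i+1}M$ are projective geometries, both of rank $r$. What remains is to prove that $A$ and $B$ are \emph{infinite} and, when $r\ge4$, that they are isomorphic to one and the same $PG(r-1,\mathbb F)$. I would organize this as: (1) an infiniteness/stabilization step; (2) the case $r=3$, which is then immediate; (3) the case $r\ge4$, using \autoref{Veblen-Young} and then matching the two coordinatizing fields.

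For step (1): by \autoref{Submatroid} --- equivalently, by the embeddings $ad^{2i}M\hookrightarrow A$ furnished by \autoref{Matroid-Projective-2} --- each $ad^{2i}M$ is isomorphic to a submatroid of $A$, so the integers $|E(ad^{2i}M)|$ are nondecreasing. Were $A$ finite, these integers would be bounded, hence eventually constant, say $|E(ad^{2i}M)|=|E(ad^{2i+2}M)|$ for all large $i$; but then $ad^{2i}M$, being isomorphic to a submatroid of $ad^{2i+2}M$ of the same finite cardinality, would be isomorphic to $ad^{2i+2}M$ itself, contradicting $ad^{i}M\ncong ad^{j}M$ for $i<j$. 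Hence $A$ is infinite, and the same argument applied to the odd directed system shows $B$ is infinite. This settles (i): a projective geometry of rank $3$ is by definition a projective plane, so $A$ and $B$ are infinite projective planes.

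For step (3), let $r\ge4$. By \autoref{Veblen-Young}, $A\cong PG(r-1,\mathbb F)$ for some field $\mathbb F$, which is necessarily infinite since $A$ is, and likewise $B\cong PG(r-1,\mathbb F')$ for an infinite field $\mathbb F'$. To see $\mathbb F\cong\mathbb F'$, I would use that the adjoint maps $\phi_i^2\colon\mathcal L(ad^{2i}M)\to\mathcal L(ad^{2i+1}M)$ are compatible with the composite transition maps of the even and odd directed systems, so they glue to an order-reversing injection carrying the coatoms of $A$ bijectively onto the atoms of $B$; identifying, via \autoref{Adjoint-Cha}, $E(B)=\lim\limits_{\begin{subarray}{c}\to\\i\end{subarray}}E(ad^{2i+1}M)=\lim\limits_{\begin{subarray}{c}\to\\i\end{subarray}}\mathcal H(ad^{2i}M)$ with the set of hyperplanes of $A$, this realizes $B$ as the dual geometry of $A$. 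Since $A\cong PG(r-1,\mathbb F)$, its dual geometry is again $PG(r-1,\mathbb F)$ over the same field: indeed the normal-complement argument in the proof of \autoref{Modular-Adjoint} is valid verbatim for infinite $\mathbb F$, because every $(r-1)$-dimensional subspace of $\mathbb F^{r}$ has a unique $1$-dimensional normal complement. Hence $B\cong PG(r-1,\mathbb F)$, so $\mathbb F'\cong\mathbb F$, and (ii) follows.

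The step I expect to be the main obstacle is the identification in (3): one must check that the flats --- in particular the hyperplanes --- of the direct-limit matroid $A$ are exactly the limits of compatible families of flats of the $ad^{2i}M$, so that the finite-stage adjoint maps really do assemble into an isomorphism between $B$ and the dual geometry of $A$. The delicate point is that the order-reversing transition maps between the lattices $\mathcal L(ad^{2i}M)$ need not be surjective onto the flats of later stages, so $\lim\limits_{\begin{subarray}{c}\to\\i\end{subarray}}\mathcal L(ad^{2i}M)$ has to be compared with $\mathcal L(A)$ by hand, using that each $ad^{2i}M$ is a restriction of $A$ and that ranks in $A$ stabilize to the ranks at the finite stages. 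Once this is in place, \autoref{Veblen-Young} together with the self-duality of $PG(r-1,\mathbb F)$ finishes (3), and the case $r=3$ needs nothing beyond step (1).
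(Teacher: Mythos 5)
Your proposal is correct in outline, and your step (1) together with the $r=3$ case is essentially the paper's argument: the paper also gets infiniteness by contradiction with the non-repeating hypothesis, using that a finite projective geometry has only finitely many submatroids up to isomorphism together with \autoref{Submatroid}, where you use stabilization of the ground-set cardinalities; both work. Where you genuinely diverge is in matching the two fields for $r\ge4$. The paper propagates representability: once $\lim\limits_{\begin{subarray}{c}\to\\i\end{subarray}}ad^{2i}M\cong PG(r-1,\mathbb F)$, each $ad^{2i}M$ is $\mathbb F$-representable, and by Bixby--Coullard's Lemma~2.8 an adjoint of an $\mathbb F$-representable matroid is a type I adjoint, so the odd terms are $\mathbb F$-representable and their limit is again $PG(r-1,\mathbb F)$. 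You instead identify $E\big(\lim\limits_{\begin{subarray}{c}\to\\i\end{subarray}}ad^{2i+1}M\big)$ with the hyperplanes of the even limit and realize the odd limit as its dual geometry, then use self-duality of $PG(r-1,\mathbb F)$. This avoids the appeal to Bixby--Coullard and yields a canonical (not merely abstract) duality between the two limits; the cost is the verification you flag, which does go through: compatible families of hyperplanes correspond exactly to hyperplanes of the limit, and collinearity of three such families in the odd limit is equivalent to $H_1\cap H_2\subseteq H_3$, one direction via \autoref{Adjoint-Prop}(ii)--(iii) applied to the rank-$(r-2)$ flat $H_1\cap H_2$ (modularity of the limit), the other via \autoref{Adjoint-Prop}(v). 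One small correction: over a general field the orthogonal complement of a hyperplane of $\mathbb F^r$ need not be a direct complement (it can be isotropic), but the hyperplane--line bijection with the dual space still gives the self-duality you need.
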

\begin{proof}
For $r\ge 4$, according to \autoref{Veblen-Young} and \autoref{Convergent-1},  we obtain that  $\lim\limits_{\begin{subarray}{c}\to\\i\end{subarray}}ad^{2i} M$ is isomorphic to the projective geometry  $PG(r-1,\mathbb{F})$  for some field $\mathbb{F}$. Suppose $\mathbb{F}$ is a finite field $GF(q)$ with $q$ elements. Then $PG(r-1,\mathbb{F})$ contains finitely many distinct submatroids (up to isomorphism). Notice from \autoref{Submatroid} that $ad^{2i}M$ can be viewed as a submatroid of $PG(r-1,\mathbb{F})$ for all $i\ge 0$. The preceding arguments make $ad^iM\cong ad^jM$ for some non-negative integers $i<j$. This contradicts the assumption that
$ad^iM\ncong ad^jM$ for all $0\le i<j$. So $\mathbb{F}$ is an infinite field. Namely, $PG(r-1,\mathbb{F})$ is an infinite projective geometry.  This implies that each $ad^{2i}M$ can be representable over $\mathbb{F}$.  Recall from  \cite[Lemma 2.8]{Bixby-Coullard1988} that if $M$ has an adjoint $adM$ and $M$ is representable over a field $\mathbb{F}$, then $adM$ is isomorphic to some type I adjoint of $M$. Hence, every $ad^{2i+1}M$ is representable over the field $\mathbb{F}$. Similar to the argument of  $\lim\limits_{\begin{subarray}{c}\to\\i\end{subarray}}ad^{2i} M$,  we can arrive at that $\lim\limits_{\begin{subarray}{c}\to\\i\end{subarray}}ad^{2i+1} M$ is also isomorphic to the same infinite projective geometry  $PG(r-1,\mathbb{F})$.

For $r=3$,  recall from \autoref{Convergent-1} that  $\lim\limits_{\begin{subarray}{c}\to\\i\end{subarray}}ad^{2i} M$ ($\lim\limits_{\begin{subarray}{c}\to\\i\end{subarray}}ad^{2i+1} M$) is a projective plane $(P,L,\iota)$.  Using the same argument as in the proof of the case $r\ge 4$, we can also obtain $(P,L,\iota)$ is an infinite projective plane. We complete the proof.
\end{proof}

Next we are ready to handle the classification problem of adjoint sequences for arbitrary matroids. Let us recall \autoref{Adjoint-Con2} that an adjoint of a direct sum of two matroids is the direct sum of the adjoints of these matroids. Immediately, the following result is now a direct consequence of \autoref{Finite-Type}, \autoref{Cyclic-Type} and \autoref{Convergent-Type}.
\begin{corollary}
Let $M$ be a simple matroid and write as a direct sum of  its connected components $M_1,\ldots, M_n$. Then
\begin{itemize}
  \item [{\rm (i)}] if the rank of each component of $M$ is no more than two, then the $k$th adjoint $ad^kM$ always exists and $ad^kM\cong M$ for all $k\ge0$;
  \item [{\rm (ii)}] if $M$ has an adjoint sequence $ad^0M,adM,\ldots$ such that  $ad^iM\cong ad^jM$ for some non-negative integers $i<j$,
  then the $k$th adjoint $ad^kM$ always exists and each connected component of $ad^kM$ is isomorphic to the free matroid $U_{1,1}$ or a finite projective geometry for all $k\ge i$;
  \item [{\rm (iii)}] if $M$ has an infinite adjoint sequence $ad^0M,adM,ad^2M,\ldots$ such that $ad^iM\ncong ad^jM$ for any non-negative integers $i<j$, then each connected component of the direct limit $\lim\limits_{\begin{subarray}{c}\to\\i\end{subarray}}ad^{2i} M$ ($\lim\limits_{\begin{subarray}{c}\to\\i\end{subarray}}ad^{2i+1} M$) is the free matroid $U_{1,1}$ or a projective geometry,  and $M$ has at least one component $M_{k}$ such that $\lim\limits_{\begin{subarray}{c}\to\\i\end{subarray}}ad^{2i} M_{k}$ is the infinite projective geometry.
\end{itemize}
\end{corollary}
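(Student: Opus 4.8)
The plan is to reduce the whole statement to the connected components of $M$ and then quote \autoref{Finite-Type}, \autoref{Cyclic-Type} and \autoref{Convergent-Type}. Iterating \autoref{Adjoint-Con2} (and invoking \autoref{Adjoint-Minor} to guarantee that the relevant adjoints exist), whenever $ad^kM$ is defined one has $ad^kM=ad^kM_1\oplus\cdots\oplus ad^kM_n$ with each $ad^kM_\ell$ simple connected of rank $r(M_\ell)$ by \autoref{Adjoint-Con3}; and since the decomposition of a matroid into connected components is unique up to reordering, these $ad^kM_\ell$ are exactly the connected components of $ad^kM$, while their direct limits are the components of $\lim\limits_{\to}ad^{2i}M$ once one checks that the direct limit commutes with finite direct sums (a routine consequence of the universal property in \autoref{Direct-Limit}, or of the componentwise form of the construction preceding \autoref{Matroid-Projective-1}). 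Part (i) then follows at once: each $M_\ell$ has rank at most two, so \autoref{Finite-Type} gives that $ad^kM_\ell$ exists and is isomorphic to $M_\ell$ for every $k\ge 0$, whence $ad^kM\cong M$.

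For part (ii) the task is to transfer the hypothesis $ad^iM\cong ad^jM$ to each component. First, since $ad^iM\cong ad^jM$, the given adjoint sequence may be extended to one that is periodic of period $j-i$ from index $i$ onwards (transport the adjoint maps through the isomorphism); in particular $ad^iM\cong ad^{i+2(j-i)}M$, and as $2(j-i)$ is even, \autoref{Submatroid} furnishes embeddings $ad^iM\hookrightarrow ad^{i+2}M\hookrightarrow\cdots\hookrightarrow ad^{i+2(j-i)}M$, forcing $ad^iM\cong ad^{i+2}M$. Decomposing both sides into components and using uniqueness of the decomposition, there is a rank-preserving bijection $\sigma$ with $ad^iM_\ell\cong ad^{i+2}M_{\sigma(\ell)}$; comparing $\sum_\ell|E(ad^iM_\ell)|$ with $\sum_\ell|E(ad^{i+2}M_\ell)|$ (equal, since $ad^iM\cong ad^{i+2}M$) together with the termwise inequalities $|E(ad^iM_\ell)|\le|E(ad^{i+2}M_\ell)|$ from \autoref{Submatroid} shows every inequality is an equality, hence $ad^iM_\ell\cong ad^{i+2}M_\ell$ for each $\ell$. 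Now \autoref{Cyclic-Type} (for $r(M_\ell)\ge 3$) and \autoref{Finite-Type} (for $r(M_\ell)\le 2$, where $U_{2,m}$ is a rank-two finite projective geometry, being connected and modular, cf. \autoref{Modular-1}) give that $ad^kM_\ell$ exists for all $k\ge i$ and is $U_{1,1}$ or a finite projective geometry, which is the assertion.

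For part (iii), each $M_\ell$ inherits an infinite adjoint sequence. For a fixed $\ell$ one of two things happens: either that sequence repeats, and then, by the analysis in part (ii) (using \autoref{Finite-Type} when $r(M_\ell)\le 2$), it becomes periodic with period dividing $2$ and $\lim\limits_{\to}ad^{2i}M_\ell$, $\lim\limits_{\to}ad^{2i+1}M_\ell$ are $U_{1,1}$ or finite projective geometries; or it has no repetitions, which by \autoref{Finite-Type} forces $r(M_\ell)\ge 3$, and then \autoref{Convergent-Type} shows $\lim\limits_{\to}ad^{2i}M_\ell$ and $\lim\limits_{\to}ad^{2i+1}M_\ell$ are infinite projective geometries. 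If every component fell in the first case, all eventual periods would divide $2$, so $ad^kM\cong ad^{k+2}M$ for large $k$, contradicting $ad^iM\ncong ad^jM$ for all $i<j$; hence at least one component $M_k$ is non-repeating, and it contributes the asserted infinite projective geometry. Since the direct limit commutes with finite direct sums, $\lim\limits_{\to}ad^{2i}M=\bigoplus_\ell\lim\limits_{\to}ad^{2i}M_\ell$ with each summand a connected matroid equal to $U_{1,1}$ or a projective geometry, and the same argument with $2i+1$ in place of $2i$ finishes the proof.

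The step I expect to be the main obstacle is the transfer in part (ii): converting a single isomorphism $ad^iM\cong ad^jM$ into an honest repetition $ad^iM_\ell\cong ad^{i+2}M_\ell$ inside every component's adjoint sequence, which forces one to rule out that the isomorphism shuffles components of equal rank nontrivially and to exploit the ground-set-size monotonicity coming from \autoref{Submatroid} (the same bookkeeping as in the proofs of \autoref{Adjoint-Two} and \autoref{Cyclic-Type}). Everything else is a direct appeal to \autoref{Adjoint-Con2} and the three classification theorems.
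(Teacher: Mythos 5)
Your proposal is correct and follows the route the paper intends: the paper states this corollary with no written proof, declaring it a direct consequence of \autoref{Adjoint-Con2} together with \autoref{Finite-Type}, \autoref{Cyclic-Type} and \autoref{Convergent-Type}, which is exactly your componentwise reduction. The only substantive detail you add beyond that one-line derivation is the transfer of $ad^iM\cong ad^jM$ to each component in part (ii) via the ground-set-size monotonicity from \autoref{Submatroid}; this is the same bookkeeping already used in \autoref{Adjoint-Two}, and your handling of it is sound.
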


We close this section by discussing the link between matroid representability and infinite adjoint sequences.  A basic question in matroid theory is how to find whether a matroid is representable. As far as we know, this question is very challenging and still open. Noting that if a matroid $M$ is representable over some field, then it has an infinite type I adjoint sequence. Conversely, suppose $M$ with rank greater than three has an infinite adjoint sequence, applying \autoref{Convergent-1} to this adjoint sequence, we arrive at that $M$ can be viewed as a submatroid of a projective geometry. Immediately,  the Coordinatization Theorem in \autoref{Veblen-Young} means that $M$ is representable. We conclude the result as follows.
\begin{corollary}
Let $M$ be a simple connected matroid with rank greater than three. Then $M$ is representable if and only if $M$ has an infinite adjoint sequence.
\end{corollary}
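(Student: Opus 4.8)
The plan is to prove the two implications separately, using \autoref{Convergent-1} and the Coordinatization Theorem \autoref{Veblen-Young} to do the real work; the corollary is essentially a repackaging of these.

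\textbf{Representable $\Rightarrow$ infinite adjoint sequence.} Suppose $M = M[A]$ is represented over a field $\mathbb{F}$. I would observe that the type I adjoint $\sigma M = M[\bm h_H \mid H \in \mathcal{H}(M)]$ defined in \eqref{Adjoint-I} is, by construction, given by a matrix whose columns are the normal vectors $\bm h_H \in \mathbb{F}^{r}$, hence $\sigma M$ is again $\mathbb{F}$-representable; since $\sigma M$ is an adjoint of $M$ (as recalled in \autoref{Sec-2}, following Bixby--Coullard), it in particular has the same rank $r$. Setting $adM := \sigma M$ and iterating, one obtains inductively that every $ad^k M$ may be taken to be an $\mathbb{F}$-represented matroid of rank $r$, so the type I adjoint sequence $ad^0 M, adM, ad^2 M, \dots$ never terminates. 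This is the required infinite adjoint sequence. (This direction in fact needs no hypothesis on the rank.)

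\textbf{Infinite adjoint sequence $\Rightarrow$ representable.} Now suppose $M$ is simple and connected of rank $r > 3$ and has an infinite adjoint sequence $ad^0 M, adM, ad^2 M, \dots$. By \autoref{Matroid-Projective-1} the direct limit $\bar M = \lim_{\,i} ad^{2i} M$ is a simple connected matroid of rank $r$, and by \autoref{Convergent-1} it is a projective geometry. Since $r \ge 4$, the Coordinatization Theorem \autoref{Veblen-Young} yields $\bar M \cong PG(r-1,\mathbb{F})$ for some field $\mathbb{F}$. By \autoref{Matroid-Projective-2} the canonical map $\phi_0 \colon ad^0 M = M \hookrightarrow \bar M$ is an embedding, so $M$ is isomorphic to a submatroid of $PG(r-1,\mathbb{F})$; restricting the coordinatization to the ground set of this submatroid exhibits $M$ as $\mathbb{F}$-representable. (One could equivalently invoke \autoref{Submatroid} iteratively to see that $M$ embeds into every $ad^{2k}M$, hence into $\bar M$.)

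I do not expect a genuine obstacle. The only point needing a word of care is the claim in the first direction that the type I adjoint of an $\mathbb{F}$-represented matroid is again $\mathbb{F}$-represented of the same rank, but this is immediate from the definition \eqref{Adjoint-I} together with the fact that $\sigma M$ is an adjoint of $M$. Everything else is a direct appeal to results already established: \autoref{Matroid-Projective-1}, \autoref{Matroid-Projective-2}, \autoref{Convergent-1}, and \autoref{Veblen-Young}.
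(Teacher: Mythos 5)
Your proof is correct and follows essentially the same route as the paper: the forward direction via iterating the type I adjoint of an $\mathbb{F}$-represented matroid, and the converse via \autoref{Convergent-1} to embed $M$ into a projective geometry of rank $r\ge 4$ and then \autoref{Veblen-Young} to coordinatize it. The extra detail you supply (explicitly invoking \autoref{Matroid-Projective-1}, \autoref{Matroid-Projective-2}, and the embedding $\phi_0$) only fills in steps the paper leaves implicit.
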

\section{Type I adjoint}\label{Sec-6}
\subsection{Type I adjoint sequences}
To our knowledge, a matroid may fail to admit an adjoint. However, if a matroid is representable, it has always a type I adjoint.  It is worth noting that the type I adjoint of a matroid depends on its a representation. As a byproduct of \autoref{Sec-5}, associated with a fixed $\mathbb{F}$-represented matroid, this section will simplify the classifications of the type I adjoint sequences into two types: finite and convergent. For a field $\mathbb{F}$,  let $M$ be an $\mathbb{F}$-representable matroid on ground set $E(M)=\{e_1,e_2,\ldots,e_m\}$, let $\varphi: E(M)\to\mathbb{F}^n$ be a representation of $M$. The matrix $A$ whose columns are the vectors $\varphi(e_1),\varphi(e_2),\ldots,\varphi(e_m)$ is the matrix corresponding to $\varphi$. Namely, $M$ is $M[A]$. The matrix $A$ is known as an $\mathbb{F}$-representation of $M$. Moreover, the pair $(M,\varphi)$, or equivalently the pair $(M,A)$, denotes an $\mathbb{F}$-represented matroid.

Let $M$ be an $\mathbb{F}$-represented matroid on ground set $E(M)=\{e_1,e_2,\ldots,e_m\}$ of rank $r$ with the representation $\varphi: E(M)\to\mathbb{F}^r$. For each hyperplane $H\in \mathcal{H}(M)$,  all the vectors $\varphi(e_i)$ with $e_i\in H$ automatically generate a hyperplane ${\rm span}(H)$ in $\Bbb{F}^r$.  Let ${\bm h}_H$ be the normal vector of ${\rm span}(H)$ in $\Bbb{F}^r$ when $H\in \mathcal{H}(M)$. This yields an $\mathbb{F}$-represented matroid $(\sigma M,\sigma\varphi)$ with ground set $\mathcal{H}(M)$ such that
$(\sigma\varphi)(H)={\bm h}_H$ for all hyperplanes $H$ in $\mathcal{H}(M)$. The {\em type I adjoint} $(\sigma M,\sigma\varphi)$ of $(M,\varphi)$ is defined as
\begin{equation*}
(\sigma M,\sigma\varphi):=M\b[{\bm h_H}\mid H\in \mathcal{H}(M)\b].
\end{equation*}
We shall frequently write $\sigma M$ for $(\sigma M,\sigma\varphi)$. Moreover, let $(\sigma^0M,\sigma^0\varphi)=(M,\varphi)$. It is also possible to repeat the procedure of taking the type I adjoint, for any positive integer $k$, the {\em $k$th  type I adjoint} $(\sigma^kM,\sigma^k\varphi)$ of $M$ is the type I adjoint of $(\sigma^{k-1}M,\sigma^{k-1}\varphi)$. By the construction of the type I adjoint, $(M,\varphi)$ has an infinite type I adjoint sequence $(\sigma^0M,\sigma^0\varphi)$, $(\sigma^1M,\sigma^1\varphi)$, $(\sigma^2M,\sigma^2\varphi),\ldots$, which is referred as Crapo sequence in \cite{Kung2020} as well.

To obtain a more precise classification of the type I adjoint sequences, let us first introduce Desargues' theorem. Desargues' theorem is one of the most fundamental and beautiful results in projective geometry. Desargues' theorem states that given three distinct lines $a_1b_1, a_2b_2$ and $a_3b_3$ in $PG(2,\mathbb{F})$, if the three lines meet at the point $o$, then the points $c_1=a_1a_2\cap b_1b_2$, $c_2=a_1a_3\cap b_1b_3$ and $c_3=a_2a_3\cap b_2b_3$ are collinear. Below describes the other beautiful property that if a projective plane satisfies Desargues' theorem, then it is isomorphic to a projective plane $PG(2,\mathbb{F})$ obtained from a vector space $\mathbb{F}^3$.

\begin{theorem}{\rm\cite{Dembowski1968}}\label{Projective-Plane-0}
Let $(P,L,\iota)$ be a projective plane. Then $(P,L,\iota)$ is isomorphic to $ PG(2,\mathbb{F})$ for a filed $\mathbb{F}$ if and only if Desargues' theorem holds. In particular, if $(P,L,\iota)$ is a finite projective plane, then $(P,L,\iota)$ is isomorphic to $PG(2,q)$ for some prime power $q$ if and only if Desargues' theorem holds.
\end{theorem}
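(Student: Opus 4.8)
The plan is to prove the two implications by quite different means: the ``only if'' direction is a short homogeneous-coordinate computation, while the ``if'' direction is the classical Hilbert coordinatization of a Desarguesian plane, for which I would invoke \cite{Dembowski1968} for the bulk of the bookkeeping.

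For the ``only if'' direction, assume $(P,L,\iota)\cong PG(2,\mathbb{F})$ and identify points with $1$-dimensional subspaces of $\mathbb{F}^3$ and lines with $2$-dimensional subspaces. Given a Desargues configuration with triangles $a_1a_2a_3$ and $b_1b_2b_3$ perspective from $o$, I would pick nonzero representatives $\bm o,\bm a_1,\bm a_2,\bm a_3,\bm b_1,\bm b_2,\bm b_3$ and rescale so that $\bm o=\bm a_i-\bm b_i$ for $i=1,2,3$; this is possible exactly because $o\in a_i\iota b_i$ and the three points on each such line are distinct. Then the point $c_k$ cut out by $a_ia_j$ and $b_ib_j$ is represented by $\bm a_i-\bm a_j=\bm b_i-\bm b_j$, and one computes $\bm c_1-\bm c_2+\bm c_3=0$, so $c_1,c_2,c_3$ lie on a common line. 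A short case analysis (or a genericity argument) disposes of the degenerate positions. Since this uses only the ring structure of $\mathbb{F}$, it applies verbatim when $\mathbb{F}=GF(q)$, giving the ``only if'' half of the finite statement as well.

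For the ``if'' direction I would run the standard coordinatization. Fix a line $\ell$ of $(P,L,\iota)$ as the line at infinity, choose a frame (an origin, two axes, a unit point), and define sums and products of coordinates geometrically --- parallelograms for addition, intercept constructions for multiplication. The crucial point is that \emph{each ring axiom is forced by a suitable instance of Desargues' theorem}: commutativity and associativity of addition, associativity of multiplication, and the two distributive laws each correspond to a Desargues configuration in the plane. This yields a (skew) field $\mathbb{F}$ together with a coordinate map that one verifies is an isomorphism $(P,L,\iota)\cong PG(2,\mathbb{F})$; the converse is the computation above. For the finite statement, the field $\mathbb{F}$ so obtained is finite, hence commutative by Wedderburn's little theorem, so $\mathbb{F}=GF(q)$ for a prime power $q$ and $(P,L,\iota)\cong PG(2,q)$.

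The main obstacle is precisely this coordinatization: making the geometric definitions of $+$ and $\cdot$ unambiguous (independence of the auxiliary choices) and deriving every ring axiom from the appropriate Desargues configuration is delicate, and constitutes the classical content of \cite{Dembowski1968}, which I would cite rather than reproduce. A secondary caveat worth recording is that planar Desargues alone produces an associative division ring, not necessarily a commutative field; the word ``field'' in the statement is to be understood in that light, commutativity being automatic in the finite case (Wedderburn) and in general requiring the stronger Pappus axiom.
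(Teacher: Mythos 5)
This statement is imported from \cite{Dembowski1968} as a black box: the paper offers no proof of it, so there is no internal argument to compare yours against. Your outline is the standard one and is sound at the level of detail given. Your ``only if'' computation (rescale representatives so that $o=a_i-b_i$, deduce $c_1+c_2+c_3=0$) is in fact literally the calculation the paper performs later in its proof of \autoref{Projective-Plane}, so that half is fully checkable; the ``if'' half you rightly delegate to the coordinatization in \cite{Dembowski1968}, which is where the real work lives. The one substantive point you raise deserves emphasis rather than a footnote: as stated, with ``field'' read as commutative field, the general equivalence is \emph{false} --- planar Desargues yields coordinatization over a division ring, and a Desarguesian plane over a noncommutative division ring (e.g.\ the quaternions) is not isomorphic to $PG(2,\mathbb{F})$ for any commutative $\mathbb{F}$; commutativity requires Pappus, or finiteness via Wedderburn. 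So the first sentence of the theorem is correct only if ``field'' is read as ``skew field,'' while the finite sentence is correct as written. This does not damage the paper's later uses of the result (in \autoref{Projective-Plane} the plane is already a restriction of $PG(2,\mathbb{F})$ for a commutative $\mathbb{F}$, and the adjoint-sequence applications concern finite planes), but your caveat is a genuine and worthwhile correction to the statement as printed.
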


Now we are ready to present that  if an $\mathbb{F}$-represented matroid $(M,\varphi)$ is a projective plane, then Desargues' theorem holds.
\begin{lemma}\label{Projective-Plane}
Let $(M,\varphi)$ be an $\mathbb{F}$-represented matroid with the representation $\varphi: E(M)\to\mathbb{F}^3$. If $M$ is a projective plane, then $M\cong PG(2,\mathbb{K})$ for some subfield $\mathbb{K}$ of $\mathbb{F}$.
\end{lemma}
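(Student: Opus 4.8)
The plan is to view $M$, through its representation $\varphi$, as a sub-projective-plane of the Desarguesian plane $PG(2,\mathbb{F})$, deduce that $M$ itself satisfies Desargues' theorem, and then apply \autoref{Projective-Plane-0} while keeping track of the coordinatizing field so as to realize it inside $\mathbb{F}$. As a first step I would record how $M$ sits inside $PG(2,\mathbb{F})$: since a projective plane is simple, $\varphi$ identifies $E(M)$ with a set $S$ of points of $PG(2,\mathbb{F})$ on which the rank function of $M$ agrees with that of $PG(2,\mathbb{F})$, so $M=PG(2,\mathbb{F})|S$, and for distinct $p,q\in S$ the line of $M$ through $p,q$ is exactly $S\cap\ell_{pq}$, where $\ell_{pq}$ is the projective line of $PG(2,\mathbb{F})$ through $p$ and $q$. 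Because $M$ is a projective plane, any two of its lines meet in a unique point of $M$, and comparing this point with the unique intersection point of the two ambient projective lines shows the two coincide; hence $S$ is closed under the join of two points and the meet of two lines taken in $PG(2,\mathbb{F})$, i.e. $S$ is a sub-projective-plane of $PG(2,\mathbb{F})$. The non-degeneracy built into the notion of projective plane also furnishes a quadrangle of $M$.

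Next I would show that Desargues' theorem holds in $M$. Given a Desargues configuration inside $M$ — two triangles perspective from a point of $M$ — the three relevant intersection points are meets of lines of $M$, hence lie in $S$ by the previous step, and they agree with the corresponding points computed in $PG(2,\mathbb{F})$. Since $PG(2,\mathbb{F})$ is Desarguesian these three points are collinear there, and as two of them lie in $S$ the $M$-line through them is the trace of their ambient line, so the third point lies on that $M$-line as well; the degenerate instances of the configuration reduce directly to axioms (p1) and (p2). Then \autoref{Projective-Plane-0} yields $M\cong PG(2,\mathbb{K}')$ for some field $\mathbb{K}'$.

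It remains to pin $\mathbb{K}'$ down as a subfield of $\mathbb{F}$, and this is where the main work lies. I would fix a quadrangle $\{X_0,X_1,X_2,U\}$ of $M$, and after a change of basis of $\mathbb{F}^3$ assume $\varphi$ sends it to the standard projective frame; let $\mathbb{K}\subseteq\mathbb{F}$ be the subfield generated by the coordinates of all vectors $\varphi(e)$, $e\in E(M)$, in this frame. Then every point of $M$ is $\mathbb{K}$-rational, so $M$ is a restriction of $PG(2,\mathbb{K})$; conversely, in a Desarguesian plane the sum and product of field elements are realized by explicit join--meet constructions starting from the frame, so the sub-plane of $PG(2,\mathbb{F})$ generated by the frame together with $S$ is precisely the set of $\mathbb{K}$-rational points of $PG(2,\mathbb{F})$, and since $S$ is already closed under join and meet and contains the frame this generated sub-plane is $S$ itself. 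Hence $M\cong PG(2,\mathbb{K})$ with $\mathbb{K}\subseteq\mathbb{F}$. (One could instead invoke the classical fact that a Desarguesian sub-plane of $PG(2,\mathbb{F})$ is $PG(2,\mathbb{K})$ for a subfield $\mathbb{K}$.) The main obstacle is exactly this passage: \autoref{Projective-Plane-0} only produces an abstract coordinatizing field, and turning it into a concrete subfield of $\mathbb{F}$ compatible with $\varphi$ requires the coordinatization machinery behind \autoref{Veblen-Young} — namely that the coordinatizing field of a Desarguesian plane is canonically recovered from any frame by join--meet operations, so that the field generated by the coordinates of $M$'s points is precisely $M$'s coordinatizing field and no $\mathbb{K}$-rational point is omitted. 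By comparison, verifying that $M$ is a restriction of $PG(2,\mathbb{F})$ and that the Desargues configuration closes up inside $M$ is routine.
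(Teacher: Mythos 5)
Your proof is correct, and its core coincides with the paper's: both reduce to verifying Desargues' theorem in $M$ and then invoke \autoref{Projective-Plane-0}, and both hinge on the same key observation that, because $M$ is a projective plane, the unique common point of two $M$-lines must equal the unique intersection point of the corresponding ambient lines of $PG(2,\mathbb{F})$, hence lies in $\varphi(E)$. The paper carries this out by explicit linear algebra (writing $o=\lambda_ia_i+\mu_ib_i$ and exhibiting the vectors $c_1,c_2,c_3$ with $c_1+c_2+c_3=0$), whereas you phrase it synthetically via closure of $S=\varphi(E)$ under ambient join and meet; these are the same argument in different clothing. Where you genuinely go beyond the paper is the last step: \autoref{Projective-Plane-0} only yields $M\cong PG(2,\mathbb{F}')$ for \emph{some} field $\mathbb{F}'$, and the paper stops there, leaving implicit why $\mathbb{F}'$ may be taken to be a subfield $\mathbb{K}$ of $\mathbb{F}$ compatible with $\varphi$. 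Your coordinatization argument --- normalizing a quadrangle of $M$ to the standard frame, letting $\mathbb{K}$ be the subfield generated by the resulting coordinates, and using that field operations are realized by join--meet constructions so that the closed set $S$ is exactly the set of $\mathbb{K}$-rational points --- supplies precisely this missing refinement (equivalently, the classical fact that a full-trace Desarguesian subplane of $PG(2,\mathbb{F})$ containing a quadrangle is a subfield subplane). So your write-up is not only correct but proves the statement as literally claimed, which the paper's own proof does only up to this standard but unstated coordinatization step.
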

\begin{proof}
To prove the lemma, from the \autoref{Projective-Plane-0}, it is sufficient to show that the Desargues' theorem holds in $M$. Since $M$ is an $\mathbb{F}$-represented matroid, then $M\cong PG(2,\mathbb{F})|\varphi(E)$, where $\varphi(E)$ is a finite set of $PG(2,\mathbb{F})$. Then we may assume that $M=PG(2,\mathbb{F})|\varphi(E)$. So three points in $M$ are collinear if and only if their representing vectors are linearly dependent.  Let $a_1b_1, a_2b_2$ and $a_3b_3$ be three lines in $M$ all meeting at the point $o$ with $a_i,b_i\in\varphi(E)$ for $i=1,2,3$ and $o\in\varphi(E)$. Since $a_i\ne b_i$ for $i=1,2,3$, then the vectors $a_i$ and $b_i$ form a basis of $2$-dimensional subspace corresponding to the line $a_ib_i$ for $i=1,2,3$. Since the point $o$ is incident with each of lines $a_ib_i$, then there exist $\lambda_i,\mu_i\in\mathbb{F}$ such that
\begin{equation*}\label{Linearly-Dependent}
o=\lambda_1a_1+\mu_1b_1,\;o=\lambda_2a_2+\mu_2b_2,\;o=\lambda_3a_3+\mu_3b_3
\end{equation*}
hold. From the above equation, we have
\begin{equation*}
c_1=\lambda_1a_1-\lambda_2a_2=\mu_2b_2-\mu_1b_1,c_2=\lambda_2a_2-\lambda_3a_3=\mu_3b_3-\mu_2b_2,
c_3=\lambda_3a_3-\lambda_1a_1=\mu_1b_1-\mu_3b_3.
\end{equation*}
Note that $c_1$ is the linear combination of $a_1$ and $a_2$, and $c_1$ is also the linear combination of $b_1$ and $b_2$. It implies $c_1$ lies on the lines $a_1a_2$ and $b_1b_2$. Since $M$ is a projective plane, then $a_1a_2$ and $b_1b_2$ has the unique common point, which implies $c_1\in\varphi(E)$ and $c_1\in a_1a_2\cap b_1b_2$. In the same way, we get $c_2\in\varphi(E)$ and $c_2\in a_2a_3\cap b_2b_3$ and $c_3\in\varphi(E)$ and $c_3\in a_1a_3\cap b_1b_3$. The equality
\[
c_1+c_2+c_3=(\lambda_1a_1-\lambda_2a_2)+(\lambda_2a_2-\lambda_3a_3)+(\lambda_3a_3-\lambda_1a_1)=0
\]
means that $c_1,c_2$ and $c_3$ are linearly dependent, i.e., $c_1,c_2$ and $c_3$ are collinear. Hence the lemma is proved.
\end{proof}

It is clear that \autoref{Projective-Plane} holds for infinite matroids as well. Then, combining with \autoref{Projective-Plane}, applying \autoref{Finite-Type}, \autoref{Cyclic-Type} and \autoref{Convergent-Type} to the type I adjoint sequences, we can obtain the classifications of the type I adjoint sequences directly, which is implicit in the work of Kung \cite{Kung2020}.
\begin{theorem}
Let $(M,\varphi)$ be a simple connected $\mathbb{F}$-represented matroid of rank $r$ with the representation $\varphi: E(M)\to\mathbb{F}^r$, then
\begin{itemize}
\item[{\rm(i)}] when $r=1,2$,  then $\sigma^iM\cong U_{1,1}\And U_{2,m}$ for all $i\ge 0$, respectively;
\item[{\rm(ii)}] when $r\ge3$, if $\sigma^iM\cong \sigma^jM$ for some non-negative integers $i<j$, then $\mathbb{F}$ is a finite field and $\sigma^kM\cong PG(r-1,q)$ for all $k\ge i$, where $GF(q)$ with $q$ elements is a subfield of $\mathbb{F}$;
\item[{\rm(iii)}] when $r\ge3$,  if $\sigma^i M\ncong\sigma^j M$ for any non-negative integers $i<j$, then $\mathbb{F}$ is an infinite field and $\lim\limits_{\begin{subarray}{c}\rightarrow\\i\end{subarray}}\sigma^{2i} M$ and $\lim\limits_{\begin{subarray}{c}\rightarrow\\i\end{subarray}}\sigma^{2i+1} M$ are isomorphic to the same infinite projective geometry  $PG(r-1,\mathbb{K})$,
      where $\mathbb{K}$ is an infinite subfield of $\mathbb{F}$.
\end{itemize}
\end{theorem}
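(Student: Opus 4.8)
The plan is to recognise that the type I adjoint sequence $(\sigma^0M,\sigma^0\varphi),(\sigma^1M,\sigma^1\varphi),\ldots$ is, as a sequence of abstract matroids, an adjoint sequence $\sigma^0M,\sigma M,\sigma^2M,\ldots$ in the sense of \autoref{Sec-5}: each $\sigma^kM$ is an adjoint of $\sigma^{k-1}M$, and the sequence never breaks off because representability over $\mathbb{F}$ is preserved by the type I adjoint construction. Hence the trichotomy of \autoref{Finite-Type}, \autoref{Cyclic-Type} and \autoref{Convergent-Type} applies directly to $\sigma^0M,\sigma M,\sigma^2M,\ldots$, and the only new input needed is to identify the coordinate field of each limiting projective geometry as a subfield of $\mathbb{F}$ — for which \autoref{Projective-Plane} handles $r=3$ and \autoref{Veblen-Young} handles $r\ge 4$ — together with the cardinality bookkeeping for $\mathbb{F}$. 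Part (i) is then exactly \autoref{Finite-Type}.

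For part (ii), suppose $\sigma^iM\cong\sigma^jM$ for some $i<j$. \autoref{Cyclic-Type} yields that, when $r\ge 4$, $\sigma^kM$ exists and is isomorphic to one fixed finite projective geometry for all $k\ge i$, while for $r=3$ it either stabilises at a single finite projective plane or becomes $2$-periodic between two finite projective planes. Since every $\sigma^kM$ is $\mathbb{F}$-representable, any limiting geometry is an $\mathbb{F}$-representable finite projective geometry; applying \autoref{Veblen-Young} when $r\ge 4$, or \autoref{Projective-Plane} to a rank-$3$ flat when $r=3$, it must be $PG(r-1,\mathbb{K})$ for a subfield $\mathbb{K}$ of $\mathbb{F}$, and finiteness forces $\mathbb{K}=GF(q)$ for a prime power $q$. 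To collapse genuine $2$-periodicity when $r=3$ I would use that $PG(2,q)$ is modular, hence by \autoref{Modular-Unique-1} has a unique adjoint up to isomorphism, namely the matroid of its opposite lattice; the opposite lattice of a finite Desarguesian projective plane is that plane again, so $\sigma^{i+2k+1}M\cong ad\,PG(2,q)\cong PG(2,q)$, and therefore $\sigma^kM\cong PG(r-1,q)$ for all $k\ge i$. The remaining assertion that $\mathbb{F}$ is finite in this regime is obtained exactly as the companion cardinality statement inside the proof of \autoref{Convergent-Type}: over a finite field every rank-$r$ matroid embeds into the finite geometry $PG(r-1,\mathbb{F})$, which is the mechanism driving the finite/infinite dichotomy.

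For part (iii), assume the $\sigma^iM$ are pairwise non-isomorphic. By \autoref{Convergent-Type} the two direct limits $\bar{M}_{\mathrm{ev}}=\lim_{i}\sigma^{2i}M$ and $\bar{M}_{\mathrm{od}}=\lim_{i}\sigma^{2i+1}M$ are infinite projective geometries; for $r\ge 4$ they are already known to be isomorphic to a common $PG(r-1,\mathbb{F}_0)$ with $\mathbb{F}_0$ infinite, and for $r=3$ to be infinite projective planes. First, the embeddings $\sigma^{2i}M\hookrightarrow\sigma^{2i+2}M$ supplied by \autoref{Submatroid} are realised by the $\mathbb{F}$-linear inclusion $\mathbb{F}^r\hookrightarrow(\mathbb{F}^r)^{**}$, so $\bar{M}_{\mathrm{ev}}$ inherits an $\mathbb{F}$-representation; invoking \autoref{Projective-Plane} in its infinite form (on a rank-$3$ flat when $r\ge 4$) shows $\bar{M}_{\mathrm{ev}}\cong PG(r-1,\mathbb{K})$ for a necessarily infinite subfield $\mathbb{K}\le\mathbb{F}$. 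Second, $\bar{M}_{\mathrm{ev}}\cong PG(r-1,\mathbb{K})$ makes each $\sigma^{2i}M$ representable over $\mathbb{K}$, so by \cite[Lemma~2.8]{Bixby-Coullard1988} each $\sigma^{2i+1}M$ is a type I adjoint of a $\mathbb{K}$-representation and hence $\mathbb{K}$-representable; then $\bar{M}_{\mathrm{od}}$ is $\mathbb{K}$-representable, and being an infinite Desarguesian geometry $PG(r-1,\mathbb{K}')$ this forces $\mathbb{K}'\hookrightarrow\mathbb{K}$. The symmetric argument gives $\mathbb{K}\hookrightarrow\mathbb{K}'$, so $\mathbb{K}\cong\mathbb{K}'$, both limits are $\cong PG(r-1,\mathbb{K})$, and $\mathbb{K}$ infinite forces $\mathbb{F}$ infinite.

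I expect the main obstacle to be in part (iii): checking that the direct limit genuinely carries an $\mathbb{F}$-representation compatible with the \autoref{Submatroid} embeddings, so that \autoref{Projective-Plane} can be applied to it and deliver $\mathbb{K}\le\mathbb{F}$, and then the mutual-representability argument that forces the even and odd limits to share their coordinate field. Everything else is a routine passage through the classification already established in \autoref{Sec-5}, with \autoref{Projective-Plane} and \autoref{Veblen-Young} used only to attach names to the projective geometries that appear.
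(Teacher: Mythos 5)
Your overall route coincides with the paper's: the paper disposes of this theorem in a single sentence, by viewing the type I adjoint sequence as an adjoint sequence and citing \autoref{Finite-Type}, \autoref{Cyclic-Type} and \autoref{Convergent-Type} together with \autoref{Projective-Plane} (in its infinite form) to name the limiting geometries. You actually supply more than the paper does, and most of your added detail is sound: the collapse of the possible $2$-periodicity at $r=3$ via modularity, uniqueness of the adjoint and self-duality of $PG(2,q)$ is a genuine gap in ``apply \autoref{Cyclic-Type} directly'' that you close correctly, and realizing the \autoref{Submatroid} embeddings as the canonical $\mathbb{F}$-linear map $\mathbb{F}^r\hookrightarrow(\mathbb{F}^r)^{**}$ is exactly what is needed to make the direct limits $\mathbb{F}$-representable so that \autoref{Projective-Plane} applies to them.

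One step, however, fails: your justification of the clause ``$\mathbb{F}$ is a finite field'' in part (ii). The mechanism you borrow from the proof of \autoref{Convergent-Type} proves that a finite field forces the sequence to repeat, i.e.\ the implication (non-repeating $\Rightarrow$ $\mathbb{F}$ infinite) used in part (iii); part (ii) needs the converse (repeating $\Rightarrow$ $\mathbb{F}$ finite), which does not follow from it and is in fact false: represent the Fano plane over the infinite field $GF(2)(t)$ and every $\sigma^kM\cong PG(2,2)$, so the sequence repeats while $\mathbb{F}$ is infinite. What your argument (and the paper's) genuinely delivers is only the weaker, correct conclusion that $GF(q)$ is a subfield of $\mathbb{F}$; since the paper gives no proof of this clause at all, the defect lies in the theorem statement as much as in your write-up, but you should not present the cited mechanism as establishing it. A secondary soft spot in (iii): you deduce $\mathbb{K}\cong\mathbb{K}'$ from mutual embeddability of the two coordinate fields, which is not a valid inference for fields in general (e.g.\ $\mathbb{C}$ and $\mathbb{C}(t)$ embed into each other); it is cleaner to compare the even and odd limits inside one ambient $PG(r-1,\mathbb{F})$ via the $\mathbb{F}$-linear identifications you already set up, which is what the paper implicitly does for $r\ge4$.
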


\subsection{Duality}
When $M$ is a vector matroid, Bixby and Coullard \cite{Bixby-Coullard1988} constructed  an adjoint of $ M$ in two equivalent ways: one is from cocircuits of $M$, the other is from hyperplane flats of $ M$.  From their constructions, we realize that there is a duality phenomenon between the adjoint and derived matroid of vector matroids. Such phenomenon has appeared in \cite{Falk1994} as well, when Falk studied the discriminantal arrangements for a general position configurations.

For a field $\mathbb{F}$, given an $\mathbb{F}$-represented matroid $(M,A):=M[A]$ of rank $r$ on ground set $E(M)=\{e_1,e_2,\ldots,e_m\}$, where the columns of  the matrix $A\in \Bbb{F}^{r\times m}$ are labelled, in order, $e_1,e_2,\ldots,e_m$. For every circuit $C\in\mathcal{C}( M)$, there is a unique vector ${\bm c}_{C}= (c_1,c_2,\dots,c_m)$ (up to a non-zero scalar multiple) in ${\mathbb F}^m$ such that  $\sum_{i = 1}^m c_i A_{e_i}= {\bm0}$, where  $c_i\neq 0$ if and only if $e_i \in C$, called the {\em circuit vector} of $C$. The {\em Oxley-Wang  derived matroid}  $\delta_{OW}M$ is defined as
\begin{equation}\label{derived-matroid}
\delta_{OW} M:= M[{\bm c}_{C}\mid C\in\mathcal{C}( M)].
\end{equation}
Let $A'\in\mathbb{F}^{(m-r)\times m}$ be a matrix such that the set of all row vectors of $A'$ is a basis of the solution space of $A{\bm x}={\bm 0}$. It is clear from matroid theory \cite[\S 2.2]{Oxley} that the dual matroid $M^*$ of $M$ can be naturally represented by column vectors of $A'$, i.e., $(M^*,A'):= M[A']$, where the columns of the matrix $A'$ are also labelled, in order, $e_1,e_2,\ldots,e_m$. Accordingly, we can define the Oxley-Wang derived matroid $\delta_{OW} M$ and the type I adjoint $\sigma M^*$ via $A$ and $A'$. Below will present a detailed proof of the duality relation in \eqref{Adjoint-Derived}, which is an illuminating relation for future studies associated to the combinatorial derived matroid defined in \cite{Freij2023}.
\begin{proposition}\label{Duality}If $(M,A)$ and $(M^*,A')$ are defined as above, then
\[\delta_{OW}M\cong\sigma M^*.\]
\end{proposition}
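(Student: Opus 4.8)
The plan is to reduce the identity to a single change of coordinates, exploiting the classical bijection between the circuits of $M$ and the hyperplanes of $M^{*}$. Recall from matroid theory that a subset is a hyperplane of $M^{*}$ exactly when its complement is a cocircuit of $M^{*}$, equivalently a circuit of $(M^{*})^{*}=M$; hence $H\mapsto E(M)\setminus H$ is a bijection from $\mathcal{H}(M^{*})$ onto $\mathcal{C}(M)$. Write $H_{C}:=E(M)\setminus C$ for the hyperplane of $M^{*}$ attached to a circuit $C$ of $M$. This is the relabelling that will eventually identify the two ground sets $\mathcal{C}(M)$ and $\mathcal{H}(M^{*})$.

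The key linear-algebra step is to locate the circuit vectors inside the representation of $M^{*}$. Since the rows of $A'$ form a basis of the solution space of $A\bm x=\bm 0$, and the circuit vector $\bm c_{C}$ lies in that solution space (by $\sum_i c_i A_{e_i}=\bm 0$), there is a unique row vector $\bm y_{C}\in\mathbb{F}^{m-r}$ with $\bm c_{C}=\bm y_{C}A'$, and $\bm y_{C}\neq\bm 0$ because $\bm c_{C}\neq\bm 0$. The $e_{i}$-th coordinate of $\bm c_{C}$ is $\bm y_{C}A'_{e_{i}}$, where $A'_{e_{i}}$ denotes the $e_{i}$-th column of $A'$; since $\bm c_{C}$ is supported exactly on $C$, we get $\bm y_{C}A'_{e_{i}}=0$ for every $e_{i}\in H_{C}$, so $\bm y_{C}$ is orthogonal to $\mathrm{span}\{A'_{e_{i}}\mid e_{i}\in H_{C}\}$. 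This span is a hyperplane of $\mathbb{F}^{m-r}$, because $r_{M^{*}}(H_{C})=r(M^{*})-1=m-r-1$; thus its orthogonal complement is a line, and $\bm y_{C}$ is a nonzero scalar multiple of the normal vector $\bm h_{H_{C}}$ that defines $\sigma M^{*}$ at the hyperplane $H_{C}$.

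It then remains to assemble these facts using only the harmless moves of applying an invertible linear map to a representation and rescaling its columns, neither of which changes the represented matroid. The map $\bm y\mapsto\bm y A'$ is a linear isomorphism from $\mathbb{F}^{m-r}$ onto the row space of $A'$ carrying $\bm y_{C}$ to $\bm c_{C}$, so $\delta_{OW}M=M[\bm c_{C}\mid C\in\mathcal{C}(M)]\cong M[\bm y_{C}\mid C\in\mathcal{C}(M)]$. Rescaling each $\bm y_{C}$ to $\bm h_{H_{C}}$ gives $M[\bm y_{C}\mid C\in\mathcal{C}(M)]\cong M[\bm h_{H_{C}}\mid C\in\mathcal{C}(M)]$, and relabelling the ground set along the bijection $C\leftrightarrow H_{C}$ turns this into $M[\bm h_{H}\mid H\in\mathcal{H}(M^{*})]=\sigma M^{*}$. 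Chaining the isomorphisms yields $\delta_{OW}M\cong\sigma M^{*}$.

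The main obstacle is the middle step: one must verify carefully that $\mathrm{span}\{A'_{e_{i}}\mid e_{i}\in H_{C}\}$ genuinely has codimension one in $\mathbb{F}^{m-r}$, since only then is $\bm y_{C}$ forced to be proportional to $\bm h_{H_{C}}$; this is precisely where the hyperplane rank identity $r_{M^{*}}(H_{C})=m-r-1$ is used, and it is also the point at which one should check that degenerate cases cause no trouble (a coloop of $M$ is a loop of $M^{*}$, contributing a zero column of $A'$, which enters the orthogonality conditions trivially). Everything else is bookkeeping about the circuit–hyperplane bijection and the two standard invariance properties of linear representations.
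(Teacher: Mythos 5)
Your argument is correct and follows essentially the same route as the paper's proof: express each circuit vector as $\bm c_C=\bm y_C A'$, observe that the support condition forces $\bm y_C$ to be (a scalar multiple of) the normal vector $\bm h_{H_C}$, and conclude via the full row rank of $A'$ that the two representations define the same matroid up to the circuit--hyperplane relabelling. The only cosmetic difference is that the paper phrases the last step as equality of ranks of corresponding subsets, while you phrase it as invariance of the represented matroid under an injective linear map and column rescaling; these are the same observation.
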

\begin{proof}
Recall from \eqref{Adjoint-I} and \eqref{derived-matroid} that
\[
\delta_{OW} M= M\b[{\bm c}_{C}\mid C\in\mathcal{C}( M)\b],\quad
\sigma M^*= M\b[{\bm h_{H}}\mid H\in \mathcal{H}( M^*)\b].
\]
Note from the matroid theory that $C$ is a circuit of $ M$ if and only if $C^*:=E(M)\setminus C$ is a hyperplane of $M^*$, see \cite[Proposition 2.1.6]{Oxley}. It suffices to show that for any circuits $C_1,C_2,\ldots,C_k\in\mathcal{C}( M)$, the both sets $\{{\bm c}_{C_1},{\bm c}_{C_2},\ldots,{\bm c}_{C_k}\}$ and $\{{\bm h_{C^*_1}},{\bm h_{C^*_2}},\ldots,{\bm h_{C^*_k}}\}$ have the same rank. Given a circuit $C\in \mathcal{C}( M)$ with the circuit vector ${\bm c}_{C}=(c_1,c_2,\ldots,c_m)$, i.e., $A{\bm c}_{C}^{\sst\rm T}={\bm0}$ and $c_i\ne 0$ if and only if $e_i\in C$. It follows from the definition that ${\bm c}_{C}$ can be written uniquely as a linear combination of row vectors of $A'$, say ${\bm c}_{C}={\bm h}A'$, where ${\bm h} \ne {\bm 0}$ obviously. Since $c_i=0$ if and only if $e_i\in C^*$, we have ${\bm h}A_{e_i}'=0$ for all $e_i\in C^*$.  Namely, ${\bm h}$ is the normal vector of the hyperplane ${\rm span}\{A'_{e_i}:e_i\in C^*\}$ in $\mathbb{F}^{m-r}$ that is spanned by all columns of $A'$ labelled by $e_i\in C^*$ . So by the definition of ${\bm h}_{C^*}$ we may assume ${\bm h}_{C^*}={\bm h}$. From above arguments, we have obtained for any circuits $C_1,C_2,\ldots,C_k\in\mathcal{C}( M)$,
\begin{eqnarray*}
&{\sst\left[\begin{array}{ccc}
{\bm c}_{\sst C_1}\\
\vdots\\
{\bm c}_{\sst C_k}
\end{array}\right]}
={\sst\left[\begin{array}{ccc}
{\bm h_{\sst C^*_1}}\\
\vdots\\
{\bm h_{\sst C^*_k}}
\end{array}\right]}A'.
\end{eqnarray*}
Since row vectors of $A'$ are of full rank, it follows that the rank of
$\{{\bm c}_{C_1},{\bm c}_{C_2},\ldots,{\bm c}_{C_k}\}$ equals the rank of $\{{\bm h_{C^*_1}},{\bm h_{C^*_2}},\ldots,{\bm h_{C^*_k}}\}$  and completes the proof.
\end{proof}

In general, $\sigma M$ may depend on the $\mathbb{F}$-representation of $M$. Oxley and Wang \cite[Theorem 8]{Oxley-Wang2019} showed that the Oxley-Wang derived matroid $\delta_{OW}M$ of an $\mathbb{F}$-represented matroid $(M,A)$ does not depend on the $\mathbb{F}$-representation $A$ if and only if $\mathbb{F}$ is $GF(2)$ or $GF(3)$. An immediate result comes from this by \autoref{Duality}.
\begin{corollary}
Let $\mathbb{F}$ be a field. Then, for all $\mathbb{F}$-represented matroids $(M,A)$ of rank $r$ and size $m$, the type I adjoint $\sigma M$ does not depend on the $\mathbb{F}$-representation $A$ if and only if $\mathbb{F}$ is $GF(2)$ or $GF(3)$, where the matrix $A\in\mathbb{F}^{r\times m}$.
\end{corollary}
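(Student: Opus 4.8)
The plan is to derive the statement directly from the duality relation \autoref{Duality} together with the theorem of Oxley and Wang recalled above, \cite[Theorem 8]{Oxley-Wang2019}, asserting that for every matroid $N$ the Oxley-Wang derived matroid $\delta_{OW}(N,B)$ is independent of the chosen $\mathbb{F}$-representation $B$ if and only if $\mathbb{F}$ is $GF(2)$ or $GF(3)$. Besides these two inputs, the only thing needed is the standard correspondence between $\mathbb{F}$-representations of $M$ and of $M^*$.

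First I would record that correspondence. Given an $\mathbb{F}$-representation $A\in\mathbb{F}^{r\times m}$ of $M$, choose $A'\in\mathbb{F}^{(m-r)\times m}$ whose rows form a basis of the solution space of $A\bm{x}=\bm{0}$, so that $(M^*,A')$ is an $\mathbb{F}$-representation of $M^*$ exactly in the sense of \autoref{Duality}. Since $A'A^{\mathrm{T}}=\bm{0}$ and $\dim\{\bm{y}:A'\bm{y}=\bm{0}\}=m-(m-r)=r=\rank A$, the rows of $A$ are in turn a basis of the solution space of $A'\bm{y}=\bm{0}$; hence $A$ plays for $A'$ exactly the role that $A'$ plays for $A$, and conversely every $\mathbb{F}$-representation of $M^*$ arises as such an $A'$ from a suitable $\mathbb{F}$-representation $A$ of $M$. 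Applying \autoref{Duality} with the pair $(M^*,A')$ in place of $(M,A)$ and using $(M^*)^*=M$ then gives, for each representation $A$ of $M$,
\[
\sigma(M,A)\;\cong\;\delta_{OW}(M^*,A').
\]

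With this identity the corollary follows in both directions. If $\mathbb{F}$ is $GF(2)$ or $GF(3)$ and $A,B$ are two $\mathbb{F}$-representations of a matroid $M$, then $A'$ and $B'$ are two $\mathbb{F}$-representations of $M^*$, so $\delta_{OW}(M^*,A')\cong\delta_{OW}(M^*,B')$ by \cite[Theorem 8]{Oxley-Wang2019}, and the displayed isomorphism gives $\sigma(M,A)\cong\sigma(M,B)$; thus $\sigma M$ never depends on the representation. Conversely, if $\mathbb{F}$ is neither $GF(2)$ nor $GF(3)$, then \cite[Theorem 8]{Oxley-Wang2019} produces a matroid $N$ with $\mathbb{F}$-representations $A_1,A_2$ satisfying $\delta_{OW}(N,A_1)\ncong\delta_{OW}(N,A_2)$; putting $M=N^*$ and letting $A_1',A_2'$ be dual representations of $M$, the displayed isomorphism gives $\sigma(M,A_i')\cong\delta_{OW}(N,A_i)$, so $\sigma(M,A_1')\ncong\sigma(M,A_2')$ and $\sigma M$ does depend on the representation for this $M$. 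The one point needing a little care is the symmetry of the hypotheses of \autoref{Duality} under the exchange $(M,A)\leftrightarrow(M^*,A')$, i.e.\ that $A\mapsto A'$ is a bijection between $\mathbb{F}$-representations of $M$ and of $M^*$; once that is in hand the rest is purely formal.
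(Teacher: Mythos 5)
Your proposal is correct and follows exactly the route the paper intends: the paper derives this corollary as an immediate consequence of Oxley--Wang's Theorem~8 combined with the duality relation of \autoref{Duality}, and your write-up simply fills in the details (the symmetry of the correspondence $A\leftrightarrow A'$ and the transfer of representation-independence in both directions) that the paper leaves implicit.
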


\section{Further research}\label{Sec-7}
In this section, of utmost initial motivation to us is to study the relevant questions appearing in \cite{Freij2023} associated with combinatorial derived matroids. Recently, as a generalization of the Oxley-Wang derived matroid, Freij-Hollanti, Jurrius and Kuznetsova \cite{Freij2023} constructed the combinatorial derived matroid for arbitrary matroids. Unlike the adjoint of matroids, this construction guarantees that an arbitrary matroid has always a  derived matroid. In particular, they showed that by choosing an appropriate representation for a representable matroid, its Oxley-Wang derived matroid may coincide with the combinatorial derived matroid. Moreover, \cite{Freij2023} also stated some close connections between the combinatorial derived matroid and the adjoint of matroids, and further proposed a number of particularly interesting and insightful questions. Before proceeding further, we first introduce the definition of the combinatorial derived matroid.
\begin{definition}
{\rm
Let $M$ be a matroid and the collection
\[
\mathcal{D}_0:=\Big\{D\subseteq\mathcal{C}(M)\mid |D|>\big|\bigcup_{C\in D}\bigcup_{e\in C}e\big|-r_M\big(\bigcup_{C\in D}\bigcup_{e\in C}e\big)\Big\}.
\]
Inductively, let $\mathcal{D}_{i+1}=\uparrow\epsilon(\mathcal{D}_i)$ for $i\ge 1$, and $\mathcal{D}=\bigcup_{i\ge 0}\mathcal{D}_i$, where
\[
\epsilon(\mathcal{D}_i):=\mathcal{D}_i\cup\{D_1\cup D_2\setminus C\mid D_1,D_2\in\mathcal{D}_i,\,D_1\cap D_2\notin\mathcal{D}_i,\,C\in D_1\cap D_2\}
\]
and
\[
\uparrow\epsilon(\mathcal{D}_i):=\{D\subseteq\mathcal{C}(M)\mid \exists D'\in\epsilon(\mathcal{D}_i) : D'\subseteq D \}.
\]
The matroid $\delta M:=(\mathcal{C}(M),\mathcal{D})$ with ground set $\mathcal{C}(M)$ is called the {\em combinatorial derived matroid} of $M$, where $\mathcal{D}$ is the collection of all dependent sets of $\delta M$.
}
\end{definition}

Associated with the combinatorial derived matroid, our immediate motive is an attempt to answer the following conjecture.
\begin{conjecture}[\cite{Freij2023},Conjecture 7.6]\label{Conjecture}
Let $M$ be a matroid of rank $r$ and size $m$ such that its dual matroid $M^*$ has an adjoint. Then
$\delta M$ is isomorphic to one of  the adjoints of $M^*$. In particular, the rank of $\delta M$  equals $m-r$.
\end{conjecture}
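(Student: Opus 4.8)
The plan is to prove something a little cleaner than matching a prescribed adjoint: we show that $\delta M$ is \emph{itself} an adjoint of $M^{*}$, by verifying the criterion of \autoref{Adjoint-Cha}. Recall first that $C\subseteq E(M)$ is a circuit of $M$ exactly when $E(M)\setminus C$ is a hyperplane of $M^{*}$ (\cite[Proposition~2.1.6]{Oxley}), so $\beta\colon \mathcal{C}(M)\to\mathcal{H}(M^{*})$, $\beta(C)=E(M)\setminus C$, is a bijection; it identifies the ground set $\mathcal{C}(M)$ of $\delta M$ with $\mathcal{H}(M^{*})$, carries the set $\{C\in\mathcal{C}(M)\mid f\notin C\}=\mathcal{C}(M\setminus f)$ to $H^{*}[f]:=\{H\in\mathcal{H}(M^{*})\mid f\in H\}$, and carries coloops of $M$ to loops of $M^{*}$. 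Since $r(adM^{*})=r(M^{*})=m-r$ for any adjoint, by \autoref{Adjoint-Cha} it suffices to prove: (i) $r(\delta M)=m-r$; and (ii) for every non-coloop $f$ of $M$ the set $\widehat{F}_{f}:=\mathcal{C}(M\setminus f)$ is a hyperplane of $\delta M$.

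The engine is the following observation. Fix \emph{any} adjoint $N=adM^{*}$ (one exists by hypothesis), realized with $E(N)=\mathcal{H}(M^{*})$ and adjoint map $\phi\colon\mathcal{L}(M^{*})\to\mathcal{L}(N)$; then every member of $\mathcal{D}_{0}$ is dependent in $N$ after transport by $\beta$. Indeed, for $D=\{C_{1},\dots,C_{k}\}\in\mathcal{D}_{0}$ set $S=\bigcup_{i}C_{i}$, $\nu=|S|-r_{M}(S)<k$, $H_{i}=E(M)\setminus C_{i}\in\mathcal{H}(M^{*})$, and $T=\bigcap_{i}H_{i}=E(M)\setminus S$, a flat of $M^{*}$. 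The dual rank formula $r_{M^{*}}(A)=|A|+r_{M}(E(M)\setminus A)-r(M)$ gives $r_{M^{*}}(T)=(m-|S|)+r_{M}(S)-r=(m-r)-\nu$, so $r_{N}(\phi(T))=r(M^{*})-r_{M^{*}}(T)=\nu$ by \autoref{Adjoint-Prop}(ii); since $T\le H_{i}$ in $\mathcal{L}(M^{*})$ and $\phi$ reverses order, $\phi(H_{i})\le\phi(T)$ for every $i$, whence $r_{N}\!\bigl(\bigvee_{i}\phi(H_{i})\bigr)\le r_{N}(\phi(T))=\nu<k$. As the $\phi(H_{i})$ are the atoms of $N$ indexed by $\beta(C_{1}),\dots,\beta(C_{k})$, this says precisely that $\{C_{1},\dots,C_{k}\}$ is dependent in $N$. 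Now the dependent sets of any matroid are closed under taking supersets and under the exchange $D_{1},D_{2}\mapsto D_{1}\cup D_{2}\setminus C$ (for $C\in D_{1}\cap D_{2}$ with $D_{1}\cap D_{2}$ independent --- the latter by submodularity, $r(D_{1}\cup D_{2})\le r(D_{1})+r(D_{2})-r(D_{1}\cap D_{2})\le|D_{1}\cup D_{2}|-2$), and $\mathcal{D}$ is obtained from $\mathcal{D}_{0}$ by iterating exactly these operations; hence, using that $\mathcal{D}$ is the least such closure, $\mathcal{D}\subseteq\mathcal{D}(N)$, i.e.\ $\mathcal{I}(N)\subseteq\mathcal{I}(\delta M)$. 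A basis of $N$ is then independent in $\delta M$, so $r(\delta M)\ge m-r$; and any $D\subseteq\mathcal{C}(M)$ with $|D|>m-r$ has $|D|>m-r\ge|S|-r_{M}(S)$, hence lies in $\mathcal{D}_{0}\subseteq\mathcal{D}$, so $r(\delta M)\le m-r$. This proves (i) and the rank claim of the conjecture, and a short induction on the levels $\mathcal{D}_{i}$ shows in passing that $\delta M$ has no dependent set of size $\le 2$, so $\delta M$ is simple, as an adjoint must be.

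For (ii), fix a non-coloop $f$ of $M$, so $r(M\setminus f)=r$ and $E(M\setminus f)$ has nullity $m-r-1$. One checks immediately that $\mathcal{D}_{0}(M\setminus f)=\mathcal{D}_{0}\cap 2^{\mathcal{C}(M\setminus f)}$, so every member of $\mathcal{D}_{0}(M\setminus f)$ is dependent in the restriction $\delta M|_{\widehat{F}_{f}}$; since any matroid on $\widehat{F}_{f}$ in which all of $\mathcal{D}_{0}(M\setminus f)$ is dependent has rank $\le m-r-1$ (every larger set of circuits of $M\setminus f$ already lies in $\mathcal{D}_{0}(M\setminus f)$), we get $r_{\delta M}(\widehat{F}_{f})\le m-r-1$. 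On the other hand $\widehat{F}_{f}=H^{*}[f]$ is a hyperplane, hence a flat, of $N$; so for any circuit $C_{0}$ of $M$ through $f$ the point $\beta(C_{0})$ lies outside this flat, $r_{N}(\widehat{F}_{f}\cup\{C_{0}\})=m-r$, and a basis $B_{N}$ of $N$ inside $\widehat{F}_{f}\cup\{C_{0}\}$ satisfies $|B_{N}|=m-r=r(\delta M)$ and $B_{N}\in\mathcal{I}(N)\subseteq\mathcal{I}(\delta M)$, so $B_{N}$ is a basis of $\delta M$. Thus $r_{\delta M}(\widehat{F}_{f}\cup\{C_{0}\})=m-r>m-r-1\ge r_{\delta M}(\widehat{F}_{f})$ for every $C_{0}\notin\widehat{F}_{f}$, which forces $\widehat{F}_{f}$ to be a flat of $\delta M$ of rank $m-r-1=r(\delta M)-1$, that is, a hyperplane. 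By \autoref{Adjoint-Cha}, $\delta M$ is then an adjoint of $M^{*}$, which is the conjecture.

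The delicate point is the inclusion $\mathcal{I}(N)\subseteq\mathcal{I}(\delta M)$, which I used via the assertion that $\mathcal{D}$ is the \emph{smallest} family of dependent sets containing $\mathcal{D}_{0}$. The subtlety is that the exchange step in the construction of $\delta M$ is applied at level $i$ using $\mathcal{D}_{i}$-independence of $D_{1}\cap D_{2}$, and such an intersection may only become dependent later; one must rule out that anything is thereby over-generated, i.e.\ that $\mathcal{D}$ still lands inside \emph{every} matroid's dependent-set family extending $\mathcal{D}_{0}$. This is the structural fact about the combinatorial derived matroid that I expect to borrow from \cite{Freij2023} (or else to re-establish); for representable $M$ it is subsumed by \autoref{Duality} together with Oxley and Wang's circuit-vector description of the derived matroid, but for a general $M$ whose dual merely happens to admit an adjoint there is no such linear-algebraic shortcut, and that is where I expect the real work to lie.
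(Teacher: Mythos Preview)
The paper does \emph{not} prove this statement: it is recorded as an open conjecture, and the authors say explicitly that they ``could not prove and disprove'' it. So there is no ``paper's own proof'' to compare against; instead, let me assess your argument on its merits.

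Your reduction is clean and essentially correct up to the point you yourself flag. The calculation that every $D\in\mathcal{D}_{0}$ is dependent in any adjoint $N=adM^{*}$ (via the dual rank formula and \autoref{Adjoint-Prop}(ii)) is right, and so is the observation that dependent sets of $N$ are closed under $D_{1},D_{2}\mapsto D_{1}\cup D_{2}\setminus\{C\}$ \emph{provided} $D_{1}\cap D_{2}$ is independent in $N$. The gap is exactly where you locate it: the operator $\epsilon$ in the definition of $\delta M$ applies this exchange whenever $D_{1}\cap D_{2}\notin\mathcal{D}_{i}$, which is strictly weaker than $D_{1}\cap D_{2}\notin\mathcal{D}(N)$. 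If at some level $i$ there exist $D_{1},D_{2}\in\mathcal{D}_{i}$ with $D_{1}\cap D_{2}$ dependent in $N$ but not yet in $\mathcal{D}_{i}$, the inductive step ``$\mathcal{D}_{i}\subseteq\mathcal{D}(N)\Rightarrow\mathcal{D}_{i+1}\subseteq\mathcal{D}(N)$'' can fail outright. (A toy picture: in a matroid with unique circuit $\{1,2,3\}$ and $4,5$ free, take $D_{1}=\{1,2,3,4\}$, $D_{2}=\{1,2,3,5\}$, $C=1$; then $D_{1}\cup D_{2}\setminus\{C\}=\{2,3,4,5\}$ is independent even though $D_{1},D_{2}$ are dependent. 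Nothing in your argument rules out an analogous configuration with $D_{1}\cap D_{2}\notin\mathcal{D}_{i}$.) Consequently your inclusion $\mathcal{I}(N)\subseteq\mathcal{I}(\delta M)$ is unproven, and with it both the rank statement $r(\delta M)\ge m-r$ and the hyperplane argument in part~(ii) collapse, since each rests on producing $\delta M$-independent sets out of $N$-independent ones.

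Your hope to ``borrow'' the missing minimality statement from \cite{Freij2023} is misplaced: that paper is precisely the source of the conjecture, and it does not establish that $\mathcal{D}$ is contained in every matroidal family of dependent sets extending $\mathcal{D}_{0}$. What you have really done is isolate that minimality as an equivalent (and arguably more tractable) reformulation of the conjecture---useful, but not a proof.
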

Inspired by the duality relation in \autoref{Duality},  the standard duality argument motivates another alternative characterization of the adjoint in terms of cocircuits.
\begin{proposition}\label{Adjoint-Cocircuit}
Let $M$, $adM$ be two matroids of the same rank $r$ and $M^*$ be the dual matroid of $M$. Then $adM$ is an adjoint of $M$ if and only if its ground set can be regarded as $E(adM):=\mathcal{C}(M^*)$ such that the sets $C^*[e]:=\{C^*\in\mathcal{C}(M^*)\mid e\notin C^*\}$ are hyperplanes of $adM$ for all $e\in E(M)$ except for loops.
\end{proposition}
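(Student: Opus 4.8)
The plan is to deduce \autoref{Adjoint-Cocircuit} directly from \autoref{Adjoint-Cha} by means of the standard circuit--hyperplane duality, since the only difference between the two statements is a relabeling of the ground set of $adM$. As in the proof of \autoref{Adjoint-Cha}, by \autoref{Fact0} one may reduce to the case where $M$ is simple; the clause ``except for loops'' is needed in general precisely because a loop of $M$ lies in every hyperplane of $M$ and hence in no cocircuit of $M$, so that $C^*[e]=\mathcal{C}(M^*)$ fails to be a hyperplane — exactly mirroring the behaviour of $H[e]$ in \autoref{Adjoint-Cha}.

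First I would set up the complementation bijection. By \cite[Proposition 2.1.6]{Oxley}, a set $C$ is a circuit of $M^*$ if and only if $E(M)\setminus C$ is a hyperplane of $M$; hence the map
\[
\kappa:\mathcal{H}(M)\longrightarrow\mathcal{C}(M^*),\qquad H\longmapsto E(M)\setminus H,
\]
is a well-defined bijection. The next step is the only computation, and it is immediate: for every non-loop $e\in E(M)$ and every $H\in\mathcal{H}(M)$ one has $e\in H$ if and only if $e\notin\kappa(H)$, so $\kappa$ carries the family $H[e]=\{H\in\mathcal{H}(M)\mid e\in H\}$ bijectively onto the family $C^*[e]=\{C^*\in\mathcal{C}(M^*)\mid e\notin C^*\}$, i.e. $\kappa(H[e])=C^*[e]$.

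Then I would transport \autoref{Adjoint-Cha} through $\kappa$. Relabeling the ground set of a matroid along a bijection is a matroid isomorphism, and therefore carries hyperplanes to hyperplanes; moreover, whether a matroid is an adjoint of $M$ depends only on its flat lattice (\autoref{Adjoint-Def}), hence is invariant under isomorphism. Consequently $adM$ is an adjoint of $M$ if and only if, after relabeling, $E(adM)=\mathcal{H}(M)$ and each $H[e]$ is a hyperplane of $adM$ (\autoref{Adjoint-Cha}); applying the further relabeling $\kappa$ turns this equivalently into: $E(adM)=\mathcal{C}(M^*)$ and each $C^*[e]=\kappa(H[e])$ is a hyperplane of $adM$. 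This is precisely the asserted equivalence, in both directions.

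I do not anticipate a genuine obstacle here: the substance is entirely carried by \autoref{Adjoint-Cha} together with Oxley's circuit--hyperplane duality, and what remains is the bookkeeping that $\kappa$ identifies $H[e]$ with $C^*[e]$ and that ``adjoint'' is a lattice-theoretic, hence isomorphism-invariant, notion. The one point to phrase carefully is the loop/coloop dictionary $e\mapsto E(M)\setminus\{e\}$, which is handled exactly as in \autoref{Adjoint-Cha} via \autoref{Fact0}; everything else is formal.
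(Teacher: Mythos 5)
Your proposal is correct and follows essentially the same route as the paper: both arguments use the complementation bijection between $\mathcal{H}(M)$ and $\mathcal{C}(M^*)$ to relabel the ground set of $adM$, observe that this identifies $H[e]$ with $C^*[e]$, and then invoke \autoref{Adjoint-Cha}. Your version merely makes the isomorphism-invariance of the notion of adjoint and the computation $\kappa(H[e])=C^*[e]$ slightly more explicit.
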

\begin{proof}
For the necessity, suppose $adM$ has a ground set $\mathcal{C}(M^*)$ such that the sets $C^*[e]=\{C^*\in\mathcal{C}(M^*)\mid e\notin C^*\}$ are hyperplanes of $adM$ for all $e\in E(M)$ except for loops. Noting that there is a natural one-to-one correspondence between all cocircuits and all hyperplanes of $M$. More precisely, each cocircuit $C^*$ of $M$ corresponds to its hyperplane $H_{C^*}=E(M)\setminus C^*$. Then we can replace every element $C^*$ in $adM$ with $H_{C^*}\in\mathcal{H}(M)$. This yields a matroid $N$ on the ground set $\mathcal{H}(M)$ such that $N\cong adM$, and every set $H[e]=\{H\in\mathcal{H}(M)\mid e\in H\}$ is a hyperplane of $N$ except for loops of $M$. It follows from \autoref{Adjoint-Cha} that $N$ is an adjoint of $M$. So $adM$ is an adjoint of $M$. Similarly, we can verify that the sufficiency  holds.
\end{proof}

Comparing the both different ways to characterize the adjoint in \autoref{Adjoint-Cha} and \autoref{Adjoint-Cocircuit}, it is not difficult to see that the description of an adjoint by the cocircuits in \autoref{Adjoint-Cocircuit} is more close to the combinatorial derived matroid. So, we expect this characterization to be a key bridge between combinatorial derived matroids and adjoints, and even to play an important role in solving those relevant problems in \cite{Freij2023}.  For example, the characterization of an adjoint by the cocircuits makes us more easily see this relation that the Oxley-Wang derived matroid of an $\mathbb{F}$-represented matroid $(M,\varphi)$ is always its an adjoint. Given  a basis $B$ of a matroid $M$ and an element  $e\in E(M)\setminus B$, the unique circuit $C(e;B)$ contained in $B\cup\{e\}$ is called the {\em fundamental circuit} of $e$ with respect to $B$. If $e\in B$, denote $C_{M^*}(e;E(M)\setminus B)$ by $C^*(e;B)$, and call it the {\em fundamental cocircuit} of $e$ with respect to $B$.
\begin{example}\label{Example}
{\rm
Let $(M,\varphi)$ be an $\mathbb{F}$-represented matroid. \cite[Lemma 3]{Oxley-Wang2019} stated that given a basis $B$ of $M$, $\{C(e;B)\mid e\in E(M)\setminus B\}$ forms a basis of $\delta_{OW}M$. Based on the property of $\delta_{OW}M$, suppose $e_0$ is not a coloop of $M$, then there is a basis $B_{e_0}$ of $M$ contained no $e_0$. Let $C[e_0]:=\{C\in\mathcal{C}(M)\mid e_0\notin C\}$. Then $C[e_0]$ contains all the fundamental circuits $C(e;B_{e_0})$ with respect to $B_{e_0}$ except for $C(e_0;B_{e_0})$.  It follows from the elementary linear algebra that the subspace ${\rm span}\big\{\mathbf{c}_{C(e;B_{e_0})}\mid e\in E(M)\setminus (B\cup\{e_0\})\big\}$ and the subspace ${\rm span}\big\{\mathbf{c}_C\mid C\in C[e_0]\big\}$ are the same hyperplane in the space ${\rm span}\big\{\mathbf{c}_C\mid C\in\mathcal{C}(M)\big\}$, where $\mathbf{c}_C$ denotes the circuit vector of a circuit $C$. This implies that every set $C[e]:=\{C\in\mathcal{C}(M)\mid e\notin C\}$  is a hyperplane of $\delta_{OW}M$ except that $e$ is a coloop of $M$. So $\delta_{OW}M$ is an adjoint of the dual matroid $M^*$ by \autoref{Adjoint-Cocircuit}. Namely, $\delta_{OW}M$ is isomorphic to the type I adjoint $\sigma M^*$ of $M^*$. From this perspective, the duality relation in \autoref{Duality} is a special case of the two equivalent characterizations of the adjoint, that one is from the hyperplanes and the other is from the cocircuits.
}
\end{example}

More generally, for an arbitrary matroid $M$, if $M$ has an adjoint $adM$, then the fundamental cocircuits of $M$ also meet the above property as the Oxely-Wang derived matroid.
\begin{lemma}\label{Fundamental-Cocircuit}
Let $M$ be a matroid of rank $r$ and $ad M$ its an adjoint with ground set $\mathcal{C}(M^*)$. If $B$ is a basis of $M$, then $\{C^*(e;B)\mid e\in B\}$ is a basis of $ad M$.
\end{lemma}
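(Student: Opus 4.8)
The plan is to mimic the proof of \autoref{Ajoint-basis}, but now working in the cocircuit picture provided by \autoref{Adjoint-Cocircuit}. Let $B = \{e_1, e_2, \ldots, e_r\}$ be a basis of $M$. By \autoref{Adjoint-Cocircuit}, the ground set of $adM$ is $\mathcal{C}(M^*)$, and each set $C^*[e] = \{C^* \in \mathcal{C}(M^*) \mid e \notin C^*\}$ is a hyperplane of $adM$ (for $e$ not a loop of $M$; a basis element is never a loop, so this applies to all $e_i \in B$). Since $adM$ has rank $r$ and $|B| = r$, it suffices to show that $\{C^*(e_i; B) \mid i \in [r]\}$ is an independent set of $adM$.

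First I would recall the elementary matroid fact about fundamental cocircuits: for a basis $B$ and $e_i \in B$, the fundamental cocircuit $C^*(e_i; B) = C_{M^*}(e_i; E(M)\setminus B)$ has the property that $e_i \in C^*(e_i; B)$ while $e_j \notin C^*(e_i; B)$ for all $j \neq i$ in $[r]$ (this is the cocircuit dual of the statement that $e_j$ lies in the fundamental hyperplane $H(e_i;B)$ for $j\ne i$; equivalently, it follows from $C^*(e_i;B) \cap B = \{e_i\}$). Consequently, for each $k \in [r]$, the cocircuit $C^*(e_k; B)$ is \emph{not} in $C^*[e_k]$, but $C^*(e_i; B) \in C^*[e_k]$ for every $i \neq k$. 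In other words, among the $r$ ground-set elements $C^*(e_1;B), \ldots, C^*(e_r;B)$ of $adM$, the hyperplane $C^*[e_k]$ of $adM$ contains exactly those with index $\neq k$.

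Next I would translate this into a flag of flats and invoke property (v) of \autoref{Adjoint-Prop}. The point is that $C^*[e]$, being a hyperplane of $adM$, equals $H[e]$ transported through the isomorphism $N \cong adM$ of \autoref{Adjoint-Cocircuit}; so we may instead work inside the isomorphic copy $N$ on ground set $\mathcal{H}(M)$ with hyperplanes $H[e]$, and apply \autoref{Adjoint-Cocircuit}'s construction directly. Precisely, since $C^*(e_i;B) \in C^*[e_k]$ iff $i \ne k$, the element $C^*(e_i;B)$ of $adM$ lies in the hyperplane $C^*[e_k]$ for all $k \ne i$, hence lies in the intersection $\bigcap_{k \in S} C^*[e_k]$ for any $S \subseteq [r] \setminus \{i\}$, but not in $C^*[e_i]$. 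This yields the strictly decreasing chain of flats of $adM$
\[
C^*[e_1] \cap \cdots \cap C^*[e_r] \subsetneqq C^*[e_1] \cap \cdots \cap C^*[e_{r-1}] \subsetneqq \cdots \subsetneqq C^*[e_1] \cap C^*[e_2] \subsetneqq C^*[e_1],
\]
the strictness at each step witnessed by $C^*(e_r;B), C^*(e_{r-1};B), \ldots, C^*(e_2;B)$ respectively. By property (v) of \autoref{Adjoint-Prop} (applied to $adM$ as the ``matroid'' and its own hyperplanes $C^*[e_i] = H[e_i]$, which is exactly the setting of that property), the set $\{C^*[e_1], \ldots, C^*[e_r]\}$ is independent in the adjoint of $adM$ --- wait, more directly: property (v) with $M$ replaced by $adM$ would require an adjoint of $adM$; instead the cleanest route is that $\{C^*(e_1;B),\ldots,C^*(e_r;B)\}$ spans a flat of corank $0$ since it is contained in no $C^*[e]$ simultaneously, hence has rank $r$ in $adM$, hence is a basis.

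\textbf{The main obstacle} I anticipate is getting the direction of property (v) right: that property is phrased for a matroid $M$ and its hyperplanes, concluding the images under $\phi$ are independent in $adM$. Here we want independence of elements of $adM$ directly. The clean fix is to use instead the contrapositive rank count: a subset $X$ of $E(adM)$ has rank $< r$ iff it lies in some hyperplane of $adM$; but if $\{C^*(e_i;B) \mid i \in [r]\}$ lay in a hyperplane $h$ of $adM$, then since every hyperplane of $adM$ of the form $C^*[e]$ omits $C^*(e;B)$, we must rule out $h$ being an ``extra'' hyperplane of $adM$ not of this form. This is where one should instead run the \emph{dual} argument: apply \autoref{Ajoint-basis} to $M^*$ noting $adM$ is an adjoint of $M^*$ (which is the content of \autoref{Adjoint-Cocircuit} read in reverse), so that $\{H(e; B^*) \mid e \in B^*\}$ is a basis of $adM$ for a basis $B^*$ of $M^*$; then identify, via $C^*(e;B) = C_{M^*}(e; E(M)\setminus B)$ and the standard bijection between fundamental circuits of $M^*$ and fundamental hyperplanes, that $C^*(e_i;B)$ corresponds exactly to a fundamental hyperplane $H(\cdot; E(M)\setminus B)$ of $M^*$ with respect to the basis $E(M)\setminus B$ of $M^*$. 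With this identification the lemma becomes \autoref{Ajoint-basis} for $M^*$ verbatim, and no new work is needed. I expect the write-up to be short once this dictionary between fundamental cocircuits of $M$ and fundamental hyperplanes of $M^*$ is stated carefully.
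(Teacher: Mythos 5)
You have isolated the right key fact---$e_i\in C^*(e_i;B)$ while $e_j\notin C^*(e_i;B)$ for $j\ne i$, so the hyperplane $C^*[e_k]$ of $adM$ contains every $C^*(e_i;B)$ except the one with $i=k$---and this is exactly the fact the paper's proof runs on. But none of your three ways of finishing actually closes the argument. (a) Property (v) of \autoref{Adjoint-Prop} takes hyperplanes of a matroid $N$ and concludes independence of their images in $adN$; applied to the hyperplanes $C^*[e_k]$ of $adM$ it would require an adjoint of $adM$ and would anyway say nothing about the elements $C^*(e_i;B)$, as you yourself note. (b) ``Contained in no $C^*[e]$, hence rank $r$'' is insufficient: $adM$ in general has many hyperplanes not of the form $C^*[e]$ (already $|E(adM)|=|\mathcal{H}(M)|$ typically far exceeds $|E(M)|$), and you concede this yourself. (c) The ``clean fix'' rests on a false premise: \autoref{Adjoint-Cocircuit} does \emph{not} say that $adM$ is an adjoint of $M^*$; it gives a second presentation of an adjoint of $M$, with ground set $\mathcal{C}(M^*)$ (the cocircuits of $M$) instead of $\mathcal{H}(M)$. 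An adjoint of $M^*$ would have rank $|E(M)|-r$ and need not even exist when $adM$ does, and your candidate basis has $r$ elements, not $|E(M)|-r$, so the cardinalities do not match either.

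The dictionary you are reaching for is complementation within $M$ itself, not dualization: the bijection $\mathcal{C}(M^*)\to\mathcal{H}(M)$, $C^*\mapsto E(M)\setminus C^*$, identifies the cocircuit presentation of $adM$ with the hyperplane presentation, carries $C^*[e]$ to $H[e]$, and carries the fundamental cocircuit $C^*(e;B)$ to the fundamental hyperplane $H(e;B)=\cl_M(B\setminus e)$; under this identification the lemma is literally \autoref{Ajoint-basis} applied to $M$ (not to $M^*$). Alternatively---and this is what the paper does---finish with a one-line closure argument from your key fact: since $|B|=r=r(adM)$ it suffices to show independence; if $\{C^*(e_i;B)\mid i\in[r]\}$ contained a circuit $C$ of $adM$ with, say, $C^*(e_k;B)\in C$, then $C\setminus\{C^*(e_k;B)\}\subseteq C^*[e_k]$, and since $C^*[e_k]$ is a flat of $adM$ this forces $C^*(e_k;B)\in\cl_{adM}\big(C\setminus\{C^*(e_k;B)\}\big)\subseteq C^*[e_k]$, contradicting $e_k\in C^*(e_k;B)$. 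As written, your proposal does not contain a complete correct proof.
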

\begin{proof}
Let $B=\{e_1,e_2,\ldots,e_r\}$ be a basis of $M$. Since $r(ad M)=r$, this proof reduces to showing that the set $\{C^*(e_i;B)\mid e_i\in B\}$ is independent. Otherwise, this set contains a circuit $C$ of $ad M$. We may assume $C^*(e_r;B)\in C$. On the other hand, the property of $C^*[e_r]$ implies that it contains all the fundamental cocircuits $C^*(e_i;B)$  except for $i=r$. Then we obtain $C^*(e_r;B)\in C^*[e_r]$, which is in contradiction with the fact $e_r\in C^*(e_r;B)$.
\end{proof}

The property in \autoref{Fundamental-Cocircuit} may be an important feature to distinguish the combinatorial derived matroid $\delta M$ of some matroid $M$ and the adjoint of its dual matroid $M^*$. At least this is a necessary condition such that  $\delta M$ is isomorphic to an adjoint of $M^*$. Based on the preceding arguments,  although we could not prove and disprove \autoref{Conjecture}, we provide the following problems which may be  a more specific solution towards \autoref{Conjecture}.
\begin{problem}
Let $M$ be a matroid and $B$ a basis of $M$. If $M^*$ has an adjoint, then the set $\{C(e;B)\mid e\in E(M)\setminus B\}$ is a basis of the combinatorial derived matroid $\delta M$.
\end{problem}
\begin{problem}
Let $M$ be a matroid. If every set $\{C(e;B)\mid e\in E(M)\setminus B\}$ is a basis of $\delta M$ for any basis $B$ of $M$, then $\delta M$ is an adjoint of the dual matroid $M^*$ of $M$.
\end{problem}

In end of this section, we will list a noticeable question for further research.
\begin{question}
{\rm
How to construct a combinatorial adjoint for an arbitrary matroid.
}
\end{question}

\end{document}